\definecolor{bleu_sombre}{rgb}{0,0,0.6}  \definecolor{rouge_sombre}{rgb}{0.8,0,0}\definecolor{vert_sombre}{rgb}{0,0.6,0}
\theoremstyle{plain}
\newtheorem{theorem}{{Theorem}}[section]
\newtheorem*{theorem*}{{Theorem}}
\newtheorem{proposition}[theorem]{Proposition}
\newtheorem*{proposition*}{Proposition}
\newtheorem{corollary}[theorem]{Corollary}
\newtheorem*{corollary*}{Corollary}
\newtheorem{lemma}[theorem]{Lemma}
\newtheorem{assumption}[theorem]{Assumption}
\newtheorem*{lemma*}{Lemma}
\theoremstyle{definition}
\newtheorem*{definition*}{Definition}
\theoremstyle{remark}
\newtheorem{remark}[theorem]{Remark}
\renewcommand{\leq}{\leqslant}	\renewcommand{\geq}{\geqslant}
\newcommand{\R}{\mathbb{R}}	
\newcommand{\N}{\mathbb{N}}
\renewcommand{\Re}{\mathrm{Re}\,}
\title[]{An example of accurate microlocal tunneling\\ in one dimension}
\author[A. Duraffour]{Antide Duraffour}
\address[A. Duraffour]{Univ. Rennes, IRMAR, Campus de Beaulieu,
bâtiments 22 et 23,
263 avenue du Général Leclerc, 35042 Rennes, France}
\email{antide.duraffour@univ-rennes.fr}
\author[N. Raymond]{Nicolas Raymond}
\address[N. Raymond]{Univ. Angers, CNRS, LAREMA, 2 Boulevard de Lavoisier, 49000 Angers}
\email{nicolas.raymond@univ-angers.fr}
\begin{document}
		\begin{abstract}
We investigate the spectral analysis of a class of pseudo-differential operators in one dimension. Under symmetry assumptions, we prove an asymptotic formula for the splitting of the first two eigenvalues. This article is an example of extension to pseudo-differential operators of the tunneling effect formulas known for the symmetric electric Schrödinger operator.
	\end{abstract}
	\maketitle
	
%\tableofcontents
	
\section{Introduction}

\subsection{Motivation}

This article is devoted to the spectral analysis of a microlocal version of the Schrödinger operator $-h^2\Delta+V(x)$.  This differential operator has drawn a lot of attention since the eighties, especially with the mathematical study of quantum tunnelling. A manifestation of this phenomenon is the effect of symmetries of $V$ on the spectrum of the operator in the semiclassical limit $h\to 0$ and has generated much interest lately. The most prominent results in this direction go back to the papers by Simon \cite{S1, S2, S3, S4} and the famous Helffer-Sjöstrand series of articles \cite{HS84, HS85c, HS85b, HS85d} establishing tunneling formulas. These articles motivated the study of purely magnetic tunneling effects, see for instance \cite{BHR16, BHR22}. These recent works have cast a new light on the semiclassical analysis of the magnetic Schrödinger operator by revealing the central role of the microlocal approach, developped for instance in \cite{Sj1982} and also in \cite{MT, martinez_andre_introduction_2002,  MS, Naktun, Shu1998}, to tackle spectral problems (see  \cite{AA23} where this view point has recently been used). Especially, the core of the strategy is a microlocal dimensional reduction that leads to an effective pseudo-differential operator in one dimension, even though the original operator is differential. These recent advances lead us to explore tunneling effect for pseudo-differential operators. In this context, already available results deal with operators of the form $a(h D_x) + V(x)$ such as the Klein-Gordon operator of symbol $\sqrt{1+\xi^2} + V(x)$ (see \cite[Section 3.2.3]{KG1994}) as well as the Harper operator (with symbol $\cos(\xi) + \cos(x)$) on $L^2(\R)$ (see \cite{HI, HII, HIII}). In the case when the $x$ and $\xi$ dependency are intertwined, there is no general result as accurate as that established in the Helffer-Sjöstrand papers (or in the works on the Witten Laplacian, see, for instance, \cite{HKN04,Nier}). The reason for that is the absence of exponentially sharp estimates for the eigenfunctions in general, even though there exist a priori bounds, see for instance \cite{MS}.

In the present article, we tackle the case of a family of pseudo-differential operators in dimension one, whose form appears for instance in \cite{BHR22} (where the tunneling effect is determined by subprincipal terms). However, the goal of the present article is not directly to recover the results of \cite{BHR22}. Its main objective is to explore optimal tunneling in a pseudodifferential context and pave the way for the analysis of purely magnetic tunneling in two dimension, see \cite{FBGMR2025}.

 Namely, we consider the pseudo-differential operator $\mathscr{L}_h$ given by
\[\mathscr{L}_h=a(hD_x)+h b(x,hD_x) \, ,\]
where $a$ and $b$ are real valued and belong to the symbol class 
\[S(\R^2)=\{p=p(x,\xi)\in\mathscr{C}^\infty(\R^2) : \forall\alpha\in\N^2\,,\exists C_\alpha>0 : |\partial^\alpha p(x,\xi)|\leq C_\alpha\}\,,\]
and where $p^w$ denotes the semiclassical Weyl quantization defined by
\[p^w\psi(x)=\left(\mathrm{Op}^w_h p\right)\psi(x)=\frac{1}{2\pi h}\int_{\R^2}e^{i(x-y)\eta/h}p\left(\frac{x+y}{2},\eta\right)\psi(y)\mathrm{d}y\mathrm{d}\eta\,.\]
The operator $\mathscr{L}_h$ is selfadjoint and bounded, from $L^2(\R)$ into $L^2(\R)$, in virtue of the Calder\'on-Vaillancourt theorem.

\subsection{Framework, heuristics and main result}

In the whole article, one will work under the following assumption on the principal symbol $a$.
\begin{assumption}\label{hyp:a}
	The real valued function $a\in S(\R^2)$ depends on $\xi$ only and we simply write $a(x,\xi)=a(\xi)$. It has a unique global minimum at $\xi=0$, assumed to be $0$, which is non-degenerate and not attained at infinity. Moreover, the function $a$ has a holomorphic extension to the strip $\Sigma_r=\R+i(-r,r)$, for some $r>0$. This extension, that we will still denote by $a$, belongs\footnote{$S(\Sigma_r)=\{p=p(x,\xi)\in\mathscr{C}^\infty(\Sigma_r) : \forall\alpha\in\N^2\,,\exists C_\alpha>0 : |\partial^\alpha p(x,\xi)|\leq C_\alpha\}$} to $S(\Sigma_r)$.
\end{assumption}
Near $\xi=0$, the symbol $a$ shares  the same features as the symbol of the Laplacian $\xi^2$. One could also consider $a$ in a slightly more general class containing $\xi^2$. To avoid the corresponding technicalities and to keep the proof as transparent as possible, we choose to focus our attention on the bounded case. Let us now describe the type of analytic perturbation that we want to deal with.

\begin{assumption}\label{hyp:b}
The function $b$ belongs to $S(\R^2)$ and it can be extended to $\R\times \Sigma_r$ holomorphically in the sense that $b(x,\cdot)\in \mathscr{O}(\Sigma_r)$ for all $x\in\R$ and that $b\in S(\R\times \Sigma_r)$.
\end{assumption}

We recall that we want to discuss the effect of symmetries on the spectrum of $p^w$. That is why we make the following assumption.

\begin{assumption}\label{hyp:c}
	The functions $a$ and $b$ are even in the sense that they satisfy $p(-X)=p(X)$ for all $X=(x,\xi)\in\R^2$. Moreover, $x\mapsto b(x,0)$ is non-negative and attains its minima at exactly two points $x_\ell<0$ and $x_r=-x_\ell>0$, which are non-degenerate minima. We assume that $b(x_\ell,0)=0$ and we let
	\[b_\infty=\liminf_{|x|\to+\infty}b(x,0)>0\,.\]
	\end{assumption}
Note that $a(\xi)=\xi^2$ (which doesn't satisfy Assumption \ref{hyp:a} though) and $b(x,\xi)=V(x)$ satisfy Assumption \ref{hyp:c} as soon as $V$ is a non-degenerate symmetric double well, which is covered by \cite{HS84}. 

Let us now discuss the heuristics that will allow us to guess and formulate a tunneling estimate for $\mathscr{L}_h$. We notice that
\[\mathscr{L}_h=\mathrm{Op}_{\hbar}^w (a(\hbar \xi)+hb(x,\hbar\xi))\,,\quad\mbox{ with }\hbar=h^{\frac12}\,,\]
which follows from a dilation in the integral defining the Weyl quantization. Naively, we write the formal expansion
 \[a(\hbar \xi)+hb(x,\hbar\xi)=h\left(\frac{a''(0)}{2}\xi^2+b(x,0)\right)+\mathcal{O}_\xi(h^{\frac32})\,.\]
If we forget the a priori non uniform remainder, we are reduced to a Schrödinger operator with double well potential. In particular this provides (see Section \ref{sec:3}) a rough estimate
\begin{equation}
 \lambda_1(\mathscr{L}_h) = \lambda_2(\mathscr{L}_h) + o\bigl(h^{\tfrac{3}{2}}\bigr) =  c_0 h^{\tfrac{3}{2}} + o\bigl(h^{\tfrac{3}{2}}\bigr), ~~ c_0=\sqrt{\frac{a''(0)\partial^2_{x,x}b(x_\ell,0)}{4}} \, . 
 \end{equation}

 We denote 
\[\mathscr{M}_\hbar=-\hbar^2\frac{a''(0)}{2}\partial^2_x+b(x,0)\,,\]
and we can recall the classical tunneling estimate (see, for instance, \cite{Harrell, Hel88, Robert} and the pedagogical paper \cite[Theorem 1.2]{BHR17}). The spectral gap between the lowest two eigenvalues satisfies
\begin{equation}\label{eq.gapMh}
\lambda_2(\mathscr{M}_\hbar)-\lambda_1(\mathscr{M}_\hbar)=(1+o(1))\mathsf{A}\hbar^{\frac12}e^{-\frac{\mathsf{S}}{\hbar}}\,,
\end{equation}
where
\[\mathsf{A}=4\left( \frac{a''(0)}{2}\right)^{\frac14} \left(\frac{\kappa}{\pi}\right)^\frac12 \sqrt{V(0)} \exp\left(-\int_{x_\ell}^{0} \frac{\partial_s\sqrt{V(s)}-\kappa}{\sqrt{V(s)}}\mathrm{d}s \right)\,,\]
and
\begin{equation*}
	\mathsf{S}=\sqrt{\frac{2}{a''(0)}}\int_{x_\ell}^{x_r} \sqrt{b(s,0)}\mathrm{d}s\,,\quad V(s) = b(s,0) \,,\quad\kappa =(\sqrt{V})'(x_\ell)\,.
\end{equation*}
Moreover, for some $c>0$, $\lambda_3(\mathscr{M}_\hbar)-\lambda_2(\mathscr{M}_\hbar)\geq c \hbar$.

Surprisingly, even though $h\mathscr{M}_\hbar$ is a rough approximation of our operator $\mathscr{L}_h$, the estimate \eqref{eq.gapMh} provides us with the one term asymptotics of the spectral gap for $\mathscr{L}_h$. In fact, our analyticity assumptions will allow us to deal with the remainders and describe the eigenfunctions and their exponential decay (see Section \ref{sec.orga} below).

Here is the main theorem.

\begin{theorem}\label{thm.main}
Under  Assumptions \ref{hyp:a}, \ref{hyp:b} and \ref{hyp:c}, we have
\[\lambda_2(\mathscr{L}_h) - \lambda_1(\mathscr{L}_h)\underset{h\to 0}{\sim}h(\lambda_2(\mathscr{M}_\hbar)-\lambda_1(\mathscr{M}_\hbar))\,.\]
Moreover, for some $c>0$, $\lambda_3(\mathscr{L}_h)-\lambda_2(\mathscr{L}_h)\geq c h^{\frac32}$.
\end{theorem}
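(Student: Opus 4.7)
The plan is to transfer the classical Helffer--Sjöstrand double-well strategy to the pseudo-differential setting by using the rescaling $\hbar=h^{1/2}$, which rewrites $\mathscr{L}_h=\mathrm{Op}_\hbar^w(a(\hbar\xi)+hb(x,\hbar\xi))$. Since $a$ is even with $a(0)=a'(0)=0$, Taylor-expanding in $\hbar\xi$ gives $a(\hbar\xi)+hb(x,\hbar\xi)=h\bigl((a''(0)/2)\xi^2+b(x,0)\bigr)+r_h(x,\xi)$, where $r_h$ collects an $\mathcal{O}(h^{3/2}\xi^3)$ Taylor remainder (from $a$) and an $\mathcal{O}(h\hbar\xi)$ term (from the $\xi$-Taylor of $b$, using that $b$ is even only in $X=(x,\xi)$). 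Thus $\mathscr{L}_h=h\mathscr{M}_\hbar+\mathrm{Op}_\hbar^w(r_h)$, and the whole analysis consists in showing that this remainder is not merely formally small but contributes exponentially negligibly to the tunneling interaction.

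First I would obtain rough spectral asymptotics: plugging eigenfunctions of $\mathscr{M}_\hbar$ (which are well localised near $\{x_\ell,x_r\}$ at scale $\sqrt{\hbar}$) into $\mathscr{L}_h$ as quasimodes, and combining with a Persson-type lower bound on the essential spectrum using $b_\infty>0$, yields $\lambda_j(\mathscr{L}_h)=h\lambda_j(\mathscr{M}_\hbar)+o(h^{3/2})$ for $j=1,2,3$. Since $\lambda_3(\mathscr{M}_\hbar)-\lambda_2(\mathscr{M}_\hbar)\geq c\hbar$, this already delivers the claimed $ch^{3/2}$ gap between $\lambda_3(\mathscr{L}_h)$ and $\lambda_2(\mathscr{L}_h)$.

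Next I would prove microlocal Agmon estimates for the first two eigenfunctions. Because $\mathscr{L}_h$ is pseudo-differential, the usual conjugation by a real weight $e^{\varphi/\hbar}$ is not tractable directly; I would instead exploit the holomorphic extensions of $a$ and $b$ in $\xi$ provided by Assumptions~\ref{hyp:a}--\ref{hyp:b} to deform the $\xi$-contour in the Weyl integral (equivalently, to work on the FBI transform side), thereby converting an imaginary shift in $\xi$ into an $x$-weight. This yields exponential decay $|\psi_j(x)|\lesssim e^{-\mathsf{d}_{\mathrm{Ag}}(x)/\hbar}$ for the Agmon distance associated to $(a''(0)/2)\xi^2+b(x,0)$. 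Using the $\Sigma$-symmetry $\Sigma\psi(x)=\psi(-x)$, which commutes with $\mathscr{L}_h$ by Assumption~\ref{hyp:c}, I would decompose the spectral problem into even and odd sectors and build a single-well quasimode $\psi_\ell$ localised near $x_\ell$ (obtained from a reference operator with a filled artificial barrier on $\{x>0\}$, handled by harmonic approximation near $x_\ell$ coupled with the Agmon decay above), together with its mirror image $\psi_r=\Sigma\psi_\ell$.

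Finally the splitting is computed by the standard interaction-matrix formula in the basis $(\psi_\ell,\psi_r)$,
\[\lambda_2(\mathscr{L}_h)-\lambda_1(\mathscr{L}_h)=2\bigl|\langle(\mathscr{L}_h-\lambda_1)\psi_\ell,\psi_r\rangle\bigr|+\text{(exponentially smaller)},\]
and the off-diagonal term is compared to the analogous quantity for $h\mathscr{M}_\hbar$. The main obstacle lies precisely here: although $\mathrm{Op}_\hbar^w(r_h)=\mathscr{L}_h-h\mathscr{M}_\hbar$ is only of formal order $h^{3/2}$ in operator norm, one must upgrade this to showing that $\bigl|\langle\mathrm{Op}_\hbar^w(r_h)\psi_\ell,\psi_r\rangle\bigr|=o\bigl(\hbar^{1/2}e^{-\mathsf{S}/\hbar}\bigr)$, that is, exponentially smaller than the expected splitting. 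This is where the holomorphicity of $a$ and $b$ in $\xi$ is decisive: by combining a complex $\xi$-contour deformation in the oscillatory integral defining $\mathrm{Op}_\hbar^w(r_h)$ with the Agmon estimates, one gains an extra factor $e^{-\delta/\hbar}$, making the remainder negligible. The sharp asymptotics of $\lambda_2(\mathscr{L}_h)-\lambda_1(\mathscr{L}_h)$ then inherits from \eqref{eq.gapMh} via the leading interaction for $h\mathscr{M}_\hbar$.
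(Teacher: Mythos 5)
Your overall architecture (one-well reduction, Agmon estimates via a complex shift of the $\xi$-contour, interaction matrix) matches the paper's, but the central step of your argument contains a genuine gap. You propose to write $\mathscr{L}_h=h\mathscr{M}_\hbar+\mathrm{Op}_\hbar^w(r_h)$ and to show that $\langle\mathrm{Op}_\hbar^w(r_h)\psi_\ell,\psi_r\rangle$ is negligible relative to the expected splitting $h\cdot\mathsf{A}\hbar^{1/2}e^{-\mathsf{S}/\hbar}\sim h^{5/4}e^{-\mathsf{S}/\sqrt h}$. This is false for the subprincipal term $h\hbar\,\xi\,\partial_\xi b(x,0)$ contained in $r_h$: acting on $\psi_\ell\approx h^{-1/8}u_\ell e^{-\Phi_\ell/\sqrt h}$, the operator $\hbar D_x$ produces the $O(1)$ factor $i\Phi_\ell'$, so the pairing with $\psi_r$ is of order $h^{3/2}\cdot h^{-1/4}e^{-\mathsf{S}/\sqrt h}=h^{5/4}e^{-\mathsf{S}/\sqrt h}$ --- exactly the order of the main term. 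No contour deformation or Agmon localization can improve this, because $\xi\partial_\xi b(x,0)$ is neither supported away from $\xi=0$ nor away from the tunneling path; it only vanishes to first order at $\xi=0$, which buys a single power of $\hbar$, already accounted for. Indeed, in the paper this term sits inside the transport operator $P_3$ at the same order as the harmonic terms, and its effect is to insert a nontrivial complex phase $\exp(-\tfrac{i}{a''(0)}\int_{x_\ell}^x\partial_\xi b(s,0)\,\dd s)$ into the WKB amplitude $u_{1,0}$, see \eqref{eq.princip}. Consequently the one-well eigenfunctions of $\mathscr{L}_{h,\ell}$ differ from those of $h\mathscr{M}_{\hbar,\ell}$ by an $O(1)$ relative (phase) factor, so neither $\langle(h\mathscr{M}_\hbar-\mu)f_\ell,f_r\rangle$ nor the remainder term can be identified with, or discarded relative to, the Schr\"odinger interaction. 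That the final answer nonetheless coincides with the Schr\"odinger one is a structural fact: the transport equations force $\Phi_\ell'u_\ell\overline{u}_r$ to be constant, and the phases cancel at the symmetry point ($u_\ell(0)\overline{u}_r(0)=|u_\ell(0)|^2$). This must be computed explicitly (Proposition \ref{prop.wh}); it cannot be inferred from smallness of $\mathscr{L}_h-h\mathscr{M}_\hbar$.

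Two secondary gaps: (i) your lower bound $\lambda_j(\mathscr{L}_h)\geq h\lambda_j(\mathscr{M}_\hbar)+o(h^{3/2})$ does not follow from quasimodes plus a Persson-type bound; it requires microlocalizing the eigenfunctions in both $\xi$ \emph{and} $x$ at scale $h^\delta$, the latter being delicate because the confinement comes from the subprincipal symbol and needs the Fefferman--Phong inequality in $S_\delta$ (Lemma \ref{lem.locamodes}); (ii) ``harmonic approximation coupled with Agmon decay'' yields pointwise exponential bounds on $\psi_\ell$ but not the multiplicative WKB approximation $e^{\Phi_\ell/\sqrt h}(\Psi_\ell^{\mathrm{WKB}}-\Pi_{h,\ell}\Psi_\ell^{\mathrm{WKB}})=\mathcal{O}(h^\infty)$ on all of $[-A,x_r-\eta]$ that is indispensable to evaluate $w_h$ to leading order; this requires the refined weight $\varphi_h=\widetilde\Phi_\ell-Nh^{1/2}\int\rho_{R,h}\widetilde\Phi_\ell'/\widetilde\Phi_\ell$ of Corollary \ref{cor.wkb}.
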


\subsection{Organization and strategy}\label{sec.orga} 
We follow the same guidelines as in the "differential" case (see, for instance, the Bourbaki exposé \cite{Robert} and \cite[Chapter 6]{DS99}). On the one hand, we will see that a corner stone of the analysis is the Kuranishi trick stating that under suitable holomorphic assumptions on the symbol, a pseudodifferential operator conjugated by an exponential weight remains a pseudodifferential operator. This allows us to extend the Agmon estimates, which are of \emph{local} nature, and yields optimal WKB approximations adapted to the pseudo-differential context. On the other hand, the stationary phase theorem reveals an effective Schrödinger operator that makes our heuristics rigorous.

In Section \ref{sec.2}, we explain why the bottom of the spectrum of $\mathscr{L}_h$ is discrete, see Lemma \ref{spdisc}. Then, we start discussing the spectral analysis of the one-well operator $\mathscr{L}_{h,\ell}$ obtained after sealing the right well, see \eqref{eq.Lhl}. The main result in Section \ref{sec.2} is Proposition \ref{prop.quasimodes}, where we describe WKB quasimodes for $\mathscr{L}_{h,\ell}$. The proof of Proposition \ref{prop.quasimodes} is given in Section \ref{sec.22} and it relies on a classical WKB construction for pseudo-differential operators, which is itself based on the stationary phase theorem (see Appendices \ref{sec.A} \& \ref{sec.B}).

In Section \ref{sec:3}, we provide the reader with the one-term asymptotics of the low-lying eigenvalues of $\mathscr{L}_{h,\ell}$, see Proposition \ref{prop.simplewell}. In particular, we show that these eigenvalues are simple and separated by gaps of order $h^\frac32$. The proof relies on Proposition \ref{prop.quasimodes} and on a microlocalization lemma, namely Lemma \ref{lem.locamodes}, which shows that the eigenfunctions of $\mathscr{L}_{h,\ell}$ are microlocalized near $(x,\xi)=(x_\ell,0)$. The localization in $x$ is more subtle than the localization in $\xi$ since it originates from the behavior of the subprincipal symbol (which requires the use of the Fefferman-Phong inequality to be analyzed). These microlocalization results can be adapted to the two-well situation and they allow to prove a first estimate of the tunneling phenomenon, see Proposition \ref{prop.doublewell}. The spectrum of the double well operator is described, modulo $\mathcal{O}(h^\infty)$, as the union of the spectra associated with the one-well operators. The eigenvalues are distributed by duets, each duet being separated from the others by gaps of order $h^\frac32$.

In Section \ref{section:exp}, we improve the localization result near $x_\ell$ by establishing optimal Agmon estimates, see Proposition \ref{prop.Agmonfunctional}. This proposition is an elliptic estimate for the conjugated operator 	$\mathscr{L}_{h,\ell}^\varphi=e^{\frac{\varphi}{\sqrt{h}}}\mathscr{L}_{h,\ell}e^{-\frac{\varphi}{\sqrt{h}}}$. In Section \ref{sec.conse}, we apply Proposition \ref{prop.Agmonfunctional} to get optimal exponential decay estimates of the one-well eigenfunctions (see Proposition \ref{corollary.agmon}) and, most importantly, to a get a very accurate approximation of the eigenfunctions by the WKB quasimodes, see Corollary \ref{cor.wkb}. This approximation looks quite similar to that in the Schrödinger case (see  \cite[Proposition 2.7]{BHR17}). However, we emphasize that we are here in a pseudo-differential context (which is even rather degenerate) and that such good approximations are rare.

Section \ref{sec:inter} is devoted to the proof of Theorem \ref{thm.main}. To do so, we follow the method originally described in the Helffer-Sjöstrand papers. We first establish Proposition \ref{prop.splitformula} by showing that the space spanned by the WKB quasimodes is a good approximation of the space spanned by the eigenfunctions associated with the first two eigenvalues. The key step is Lemma \ref{lem.fstarquasi}.

Then, we study the interaction term, see Proposition \ref{prop.wh}. Here, the analysis deviates from the usual strategy consisting in representing the interaction term by means of an integral running over an interface between the wells (see \cite[Section 4.1]{BHR17} in dimension one). This strategy cannot be used in our pseudo-differential context since it is based on an integration by parts. Instead, we directly replace the one-well groundstates appearing in \eqref{eq:inter} by their WKB expansions and we use the stationary phase theorem. We end up with an integral that can be computed explicitely thanks to the transport equations determining the amplitude of the WKB Ansätze\footnote{Let us mention that it is also possible to follow the approach developed in \cite[pp.69-70]{HI} and in \cite[Section 3.2.3]{KG1994} to bypass the absence of the Green-Riemann formula. However, at least in the one dimensional case, it is more direct to use the transport equation defining the WKB quasimodes.}.

\section{The one-well operator and WKB constructions}\label{sec.2}
This section is devoted to the analysis of the left "one-well" operator $\mathscr{L}_{h,\ell}$, obtained by sealing the well on the right. This operator is defined as follows. We consider
\begin{equation}\label{eq.Lhl}
	\mathscr{L}_{h,\ell} := \mathscr{L}_h + k_\ell(x)\,,
	\end{equation}
where $k_\ell$ is a non-negative smooth function with support in $D(x_r,\eta)$ and such that the function $x\mapsto b_\ell(x,0)=b(x,0)+k_\ell(x)=V(x)+k_\ell(x)$ has a unique global minimum at $x_\ell$.
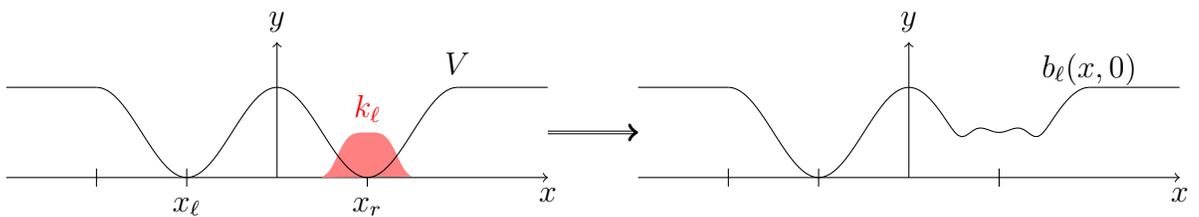
\begin{figure}[ht!]
	\begin{center}
		\begin{tikzpicture}[scale = 0.98]
			\draw[->] (-3,0) -- (3,0) node[below] {$x$};
			\draw[->] (0,0) -- (0,1.5) node[above] {$y$};
			\draw (-1,-0.1)--(-1,0.1);
			\draw (-2,-0.1)--(-2,0.1);
			\draw (1,-0.1)--(1,0.1);
			\draw (-1,-0.1) node[below] {$x_\ell$};
			\draw (1,-0.1) node[below] {$x_r$};
			\draw (2,1.5) node[below, color = black] {$V$};
			\draw (1,0.5) node[above, color=red] {$k_\ell$};
			\draw[domain=-3:-2,samples=100,color=black, thin] plot ({\x},{1});
			\draw[domain=2:3,samples=100,color=black, thin] plot ({\x},{1});
			\fill[color=red!50] (0.5,0) -- plot [domain=0.5:1.5, samples = 200] ({\x},{0.5*exp(-60*(\x-1)*(\x-1)*(\x-1)*(\x-1))})--cycle;
			\draw[double,->] (3,0.5) -- (4,0.5);
			\draw[domain=-2:2,samples=100,color=black, thin] plot ({\x+7},{0.5*(1-cos(180*(\x+1))))+0.5*exp(-60*(\x-1)*(\x-1)*(\x-1)*(\x-1))});
			\draw[domain=-3:-2,samples=100,color=black, thin] plot ({\x+7},{1});
			\draw[domain=2:3,samples=100,color=black, thin] plot ({\x+7},{1});
			\draw[domain=-2:2,samples=100,color=black, thin] plot ({\x},{0.5*(1-cos(180*(\x+1))))});
			\draw (9,1.5) node[below, color = black] {$b_\ell(x,0)$};
			\draw[->] (4,0) -- (10,0) node[below] {$x$};
			\draw[->] (7,0) -- (7,1.5) node[above] {$y$};
			\draw (6,-0.1)--(6,0.1);
			\draw (5,-0.1)--(5,0.1);
			\draw (8,-0.1)--(8,0.1);
		\end{tikzpicture}
	\end{center}
	\caption{Sealing a well}
	\label{fig:1}
\end{figure}

In a similar way, we define the operator on the right:
\[\mathscr{L}_{h,r} := \mathscr{L}_h + k_\ell(-x)\,.\]
Considering the symmetry operator 
  \begin{equation}\label{eq.U} \displaystyle U : \left \{ \begin{array}{ccc}
	\psi & \longmapsto & \psi(-\cdot) \\
	L^2(\R) & \longrightarrow & L^2(\R)
\end{array} \right.\,, \end{equation}
we observe that $\mathscr{L}_{h,\ell}=U^*\mathscr{L}_{h,r}U$. Thus, by unitary equivalence, all the results that we prove for $\mathscr{L}_{h,\ell}$ will hold for $\mathscr{L}_{h,r}$.

The following lemma, which is proved in Section \ref{sec.spdisc}, shows that the one-well operators have necessarily discrete spectrum below the threshold $b_\infty h$.
\begin{lemma}	\label{spdisc}
	Let $C\in\left(0,b_\infty\right)$. There exists $h_0>0$	such that, for all $h\in(0,h_0)$, the spectrum of $\mathscr{L}_{h,\ell}$ below $Ch$ is discrete.
\end{lemma}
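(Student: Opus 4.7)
The plan is to establish that $\inf \sigma_{\mathrm{ess}}(\mathscr{L}_{h,\ell}) > Ch$ for all sufficiently small $h$, which proves the claim. By the Weyl criterion for self-adjoint operators, it suffices to show that for any sequence $(\phi_n)_n \subset L^2(\R)$ with $\|\phi_n\|=1$ and $\phi_n\rightharpoonup 0$,
\[\liminf_{n\to\infty}\langle \mathscr{L}_{h,\ell}\phi_n,\phi_n\rangle \geq C'h - O(h^2)\]
for some $C'\in(C,b_\infty)$. The guiding intuition is that at infinity in phase space the symbol $a(\xi)+h(b(x,\xi)+k_\ell(x))$ is bounded below by roughly $h\,b_\infty$: by Assumption \ref{hyp:a}, $a\geq\alpha>0$ away from $\xi=0$, while for $|x|$ large and $|\xi|$ small, $b(x,\xi)\approx b(x,0)\geq b_\infty-\epsilon$ by Assumption \ref{hyp:c}.

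Fix $C'\in(C,b_\infty)$ and $\epsilon>0$ with $b_\infty-3\epsilon>C'$. By Assumption \ref{hyp:a} there exist $\delta,\alpha>0$ with $a(\xi)\geq \alpha$ for $|\xi|\geq\delta/2$, and by shrinking $\delta$ we can also impose $\|\partial_\xi b\|_\infty\delta\leq\epsilon$. Choose $R>0$ with $\mathrm{supp}(k_\ell)\subset\{|x|<R\}$ and $b(x,0)\geq b_\infty-\epsilon$ for $|x|\geq R$. Introduce a smooth partition $\chi_0^2+\chi_1^2=1$ with $\chi_0\in C_c^\infty(\R)$ supported in $[-\delta,\delta]$ and equal to $1$ on $[-\delta/2,\delta/2]$. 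Since $\chi_j(hD)$ commutes with $a(hD)$ and $[\chi_j(hD),hb^w+k_\ell]$ is $O(h)$ in operator norm (symbol calculus), the IMS formula gives
\[
\langle \mathscr{L}_{h,\ell}\phi_n,\phi_n\rangle = \sum_{j=0,1}\langle \mathscr{L}_{h,\ell}\chi_j(hD)\phi_n,\chi_j(hD)\phi_n\rangle + O(h^2).
\]

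For the high-frequency piece, $a\geq\alpha$ on $\mathrm{supp}(\chi_1)$, $\|hb^w\|=O(h)$ by Calder\'on-Vaillancourt, and $k_\ell\geq 0$, hence $\langle \mathscr{L}_{h,\ell}\chi_1(hD)\phi_n,\chi_1(hD)\phi_n\rangle\geq (\alpha/2)\|\chi_1(hD)\phi_n\|^2\geq C'h\|\chi_1(hD)\phi_n\|^2$ for $h$ small. For the low-frequency piece, set $\psi_n=\chi_0(hD)\phi_n$ and decompose $b(x,\xi)=b(x,0)+\xi c(x,\xi)$ with $c\in S(\R^2)$. Since $\widehat{\psi_n}$ is supported in $[-\delta,\delta]$, one has $\|hD\psi_n\|\leq\delta\|\psi_n\|$, and the Moyal expansion yields
\[h\langle b^w\psi_n,\psi_n\rangle \geq h\int b(x,0)|\psi_n(x)|^2\,dx - \epsilon h\|\psi_n\|^2 - O(h^2).\]
Splitting this integral at $|x|=R$ and using $b(\cdot,0)\geq 0$ inside and $\geq b_\infty-\epsilon$ outside,
\[\int b(x,0)|\psi_n|^2\,dx \geq (b_\infty-\epsilon)\|\psi_n\|^2 - (b_\infty-\epsilon)\int_{|x|\leq R}|\psi_n|^2\,dx.\]
The crucial compactness step: pick $\tau\in C_c^\infty(\R)$ with $0\leq\tau\leq 1$ and $\tau=1$ on $[-R,R]$. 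The Schwartz kernel of $\tau(x)\chi_0(hD)$ belongs to $L^2(\R^2)$ (since $\chi_0\in L^2$), so the operator is Hilbert-Schmidt and compact; combined with $\phi_n\rightharpoonup 0$, this forces $\int_{|x|\leq R}|\psi_n|^2\leq\|\tau\chi_0(hD)\phi_n\|^2\to 0$. Hence, using $b_\infty-2\epsilon>C'$,
\[\langle \mathscr{L}_{h,\ell}\chi_0(hD)\phi_n,\chi_0(hD)\phi_n\rangle\geq h(b_\infty-2\epsilon)\|\chi_0(hD)\phi_n\|^2-o(1)\geq C'h\|\chi_0(hD)\phi_n\|^2-o(1).\]

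Summing both contributions and using $\|\chi_0(hD)\phi_n\|^2+\|\chi_1(hD)\phi_n\|^2=1$ gives $\langle \mathscr{L}_{h,\ell}\phi_n,\phi_n\rangle \geq C'h - O(h^2) - o(1)$. Taking $n\to\infty$ shows that every $\lambda\in\sigma_{\mathrm{ess}}(\mathscr{L}_{h,\ell})$ satisfies $\lambda\geq C'h-O(h^2)>Ch$ for $h$ small, proving discreteness below $Ch$. The main obstacle is the low-frequency estimate, which must interlock three ingredients: the symbolic reduction $b^w\approx b(\cdot,0)$ on functions with Fourier support near $0$, the at-infinity lower bound on $b(\cdot,0)$ from Assumption \ref{hyp:c}, and the compactness of $\tau(x)\chi_0(hD)$ that converts the weak vanishing of $\phi_n$ into strong vanishing of its $L^2$-mass on $\{|x|\leq R\}$.
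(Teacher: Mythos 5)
Your proof is correct, but it takes a genuinely different route from the paper's. You characterize the essential spectrum via singular Weyl sequences (a Persson-type argument), split in frequency with an IMS partition $\chi_0(hD)^2+\chi_1(hD)^2=1$, reduce $b^w$ to multiplication by $b(x,0)$ by hand on functions with semiclassical Fourier support in $[-\delta,\delta]$ (using $\|hD\psi_n\|\leq\delta\|\psi_n\|$), and dispose of the bounded region in $x$ through the Hilbert--Schmidt compactness of $\tau(x)\chi_0(hD)$ together with $\phi_n\rightharpoonup 0$. The paper instead works entirely at the symbol level: it adds a compactly supported correction $W(x)\chi_1(\xi)$ to make the full Weyl symbol of $\mathscr{L}_{h,\ell}-hz$ bounded below by $\epsilon_0 h$, invokes the Fefferman--Phong inequality to transfer this positivity to the operator modulo the compact perturbation $\mathscr{K}_h=(W\chi_1)^w$, and concludes via Fredholm theory and analytic Fredholm continuation. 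Your approach buys a more elementary argument (only Calder\'on--Vaillancourt and first-order composition, no Fefferman--Phong), at the cost of the Weyl-criterion machinery; the paper's version has the advantage of introducing the Fefferman--Phong-plus-compact-perturbation template that is reused throughout Sections 3 and 4. One small quantitative slip: in the low-frequency estimate the term $\langle(\xi c)^w\psi_n,\psi_n\rangle$ is controlled by $\|c^w\|_{\mathcal{L}(L^2)}\,\|hD\psi_n\|\,\|\psi_n\|+\mathcal{O}(h)\|\psi_n\|^2$, and $\|c^w\|$ is bounded by Calder\'on--Vaillancourt in terms of finitely many seminorms of $c=\int_0^1\partial_\xi b(x,t\xi)\,\mathrm{d}t$, not merely by $\|\partial_\xi b\|_\infty$; so the smallness condition on $\delta$ should read $\|c^w\|\,\delta\leq\epsilon$. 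Since $c$ does not depend on $\delta$, this is harmless and the argument goes through.
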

The non-emptiness of the discrete spectrum follows from WKB contructions. We let
\begin{equation}
	\label{eq:phiell}
	\Phi_{\ell} : x \in \R \longmapsto \sqrt{\frac{2}{a''(0)}}  \left | \int_{x_\ell}^x \sqrt{b_{\ell}(s,0)}\mathrm{d}s \right | \, ,
\end{equation}
which is a $\mathscr{C}^{\infty}(\R)$ function since $s \longmapsto \mathrm{sgn}(s-x_\ell)\sqrt{b_\ell(s,0)}$ is smooth.

\begin{proposition}[WKB quasimodes]\label{prop.quasimodes}
	Let $n \in \N^*$. There exist
	\begin{enumerate}[\rm (i)]
	\item  a smooth function $u_{n}(\cdot,h)$ and a family $(u_{n,j})_{j \in \N}$ of elements of $S(\R)$ satisfying (in the sense of asymptotic expansions in $S(\R)$ as defined in Appendix \ref{apx.1})
	\[u_{n}(x,h)\underset{h\to 0}{\sim}\sum_{j\geq 0} u_{n,j}(x)h^{\frac{j}{2}}\,,\]
	where $u_{n,0}$ solves the transport equation
	\[ \left(i \Phi_\ell'(x)\partial_\xi b_\ell(x,0) + \frac{a''(0)}{2} \Phi_\ell''(x) +  a''(0)\Phi_\ell'(x) \partial_x-(2n-1)c_0\right)u_{n,0}=0\,,\]
	with $c_0=\sqrt{\frac{a''(0)\partial^2_{x,x}b(x_\ell,0)}{4}}>0$;
	\item a real number $\lambda^{\mathrm{WKB}}_n(h)$ and a family of real number $(\lambda_{n,j})_{j \in \N}$ satisfying
	\[\lambda^{\mathrm{WKB}}_{n}(h)\underset{h\to 0}{\sim}\sum_{j\geq 3} \lambda_{n,j}h^{\frac{j}{2}}\,,\quad \mbox{ with }\quad\lambda_{n,3}=(2n-1)c_0\,;\]
	\end{enumerate}
	such that the following holds. 
	
	Considering a smooth function $\chi$ with compact support\footnote{From now on, the space of such functions will be denoted $\mathscr{C}_0^{\infty}(\R)$} that  equals $1$ in a neighbourhood of $x_\ell$ and letting
	\begin{equation}\label{eq.wkbqm}
		\Psi^{\mathrm{WKB}}_{\ell,n}=h^{-\frac{1}{8}}\chi u_{n,h} e^{-\frac{\Phi_\ell}{\sqrt{h}}}\,,
	\end{equation}
	we have
	\[\left\|\left(\mathscr{L}_{h,\ell}-\lambda^{\mathrm{WKB}}_n(h)\right)\Psi^{\mathrm{WKB}}_{\ell,n}\right\|=\mathcal{O}\bigl(h^{\infty- \tfrac{n-1}{4}}\bigr)\lVert \Psi_{\ell,n}^{\mathrm{WKB}} \rVert \,.\]
	In particular, we have
	\[\lambda_n(\mathscr{L}_{h,\ell})\leq (2n-1)c_0h^{\frac32}+o(h^{\frac32})\,.\]
\end{proposition}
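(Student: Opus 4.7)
The plan is to build the quasimode $\Psi^{\mathrm{WKB}}_{\ell,n}$ by the standard WKB recipe, adapted to the pseudo-differential setting, and then to check that the residual is $\mathcal{O}(h^\infty)$ in $L^2$.

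First, with the rescaling $\hbar=\sqrt{h}$ used in the introduction, I write $\mathscr{L}_{h,\ell}=\mathrm{Op}_\hbar^w\bigl(a(\hbar\xi)+\hbar^2 b_\ell(x,\hbar\xi)\bigr)$ and apply it to the formal Ansatz $u_h(x)\,e^{-\Phi_\ell(x)/\hbar}$. Using the holomorphic extensions of $a$ and $b$ in the strip $\Sigma_r$ granted by Assumptions \ref{hyp:a}--\ref{hyp:b}, I deform the $\eta$-contour in the Weyl integral by $i\Phi_\ell'(x)$, which absorbs the non-stationary part of the weight. Since $\hbar\Phi_\ell'(x)$ is uniformly small as $\hbar\to 0$, the shifted symbol $a(\hbar\eta+i\hbar\Phi_\ell'(x))+\hbar^2 b_\ell(x,\hbar\eta+i\hbar\Phi_\ell'(x))$ stays in a suitable symbol class, and the stationary phase argument set up in Appendix \ref{sec.B} yields a complete asymptotic expansion
\[
\mathscr{L}_{h,\ell}\bigl(u_h\,e^{-\Phi_\ell/\hbar}\bigr)= e^{-\Phi_\ell/\hbar}\sum_{j\geq 0}\hbar^{j+2}\,L_j u_h\,,
\]
where each $L_j$ is an explicit differential operator in $x$ whose coefficients involve derivatives of $a$ and $b_\ell$ at $\xi=i\Phi_\ell'(x)$.

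Matching powers of $\hbar$ in $(\mathscr{L}_{h,\ell}-\lambda^{\mathrm{WKB}}_n(h))\Psi=0$, with $u_h\sim\sum_{j\geq 0}u_{n,j}\hbar^j$ and $\lambda^{\mathrm{WKB}}_n(h)\sim\sum_{j\geq 3}\lambda_{n,j}\hbar^j$, produces a hierarchy. The order-$\hbar^2$ consistency relation is the eikonal equation $\tfrac{a''(0)}{2}(\Phi_\ell')^2=b_\ell(\cdot,0)$, automatic from \eqref{eq:phiell}. At order $\hbar^3$ I recover precisely the transport equation for $u_{n,0}$ in the statement. This first-order ODE has a singular point at $x_\ell$ (where its $\partial_x$-coefficient vanishes); linearising there reduces the problem to a rescaled harmonic oscillator whose spectrum forces $\lambda_{n,3}=(2n-1)c_0$, using the identity $\tfrac{a''(0)}{2}\Phi_\ell''(x_\ell)=c_0$ obtained by differentiating the eikonal twice at $x_\ell$. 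For $n=1$ the solution is smooth and nowhere vanishing on a neighbourhood of $x_\ell$, and for $n\geq 2$ it vanishes at $x_\ell$ with a Hermite-type profile; away from $x_\ell$ the ODE is regular and $u_{n,0}$ is obtained by integration along the flow of $a''(0)\Phi_\ell'\partial_x$. Each subsequent order $\hbar^{k+3}$ produces an inhomogeneous transport equation of the same type whose Fredholm solvability condition at $x_\ell$ determines $\lambda_{n,k+3}$ and then $u_{n,k}\in S(\R)$.

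Once the coefficients are in place, a Borel summation sums $u_h$ to a genuine element of $S(\R)$ and $\lambda^{\mathrm{WKB}}_n(h)$ to a real number realising the formal expansions modulo $h^\infty$. Multiplying by the cutoff $\chi$ and the normalising prefactor $h^{-1/8}$ yields $\Psi^{\mathrm{WKB}}_{\ell,n}$. Two sources of error must be controlled: the interior truncation error, which is $\mathcal{O}(h^N)$ for any $N$ by construction; and the commutator contribution $[\mathscr{L}_{h,\ell},\chi]u_h e^{-\Phi_\ell/\hbar}$, supported on a set where $\Phi_\ell\geq\delta>0$ and hence of size $\mathcal{O}(e^{-\delta/\sqrt{h}})=\mathcal{O}(h^\infty)$. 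Together they give $(\mathscr{L}_{h,\ell}-\lambda^{\mathrm{WKB}}_n(h))\Psi^{\mathrm{WKB}}_{\ell,n}=\mathcal{O}(h^\infty)\lVert\Psi^{\mathrm{WKB}}_{\ell,n}\rVert$. The spectral bound $\lambda_n(\mathscr{L}_{h,\ell})\leq (2n-1)c_0 h^{3/2}+o(h^{3/2})$ then follows from the min--max principle applied to the family $(\Psi^{\mathrm{WKB}}_{\ell,k})_{k=1}^n$, whose Hermite-like leading profiles make it nearly orthonormal. The main technical hurdle is the first step: producing the stationary phase expansion after the contour shift, with uniform symbol-class control on the shifted symbol and quantitative remainder estimates at every order. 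This is precisely what the holomorphicity of $a$ and $b$, combined with the smallness of $\hbar\Phi_\ell'$, are designed to deliver, and it is the purpose of Appendices \ref{sec.A}--\ref{sec.B}.
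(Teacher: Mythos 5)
Your proposal follows essentially the same route as the paper: contour deformation in the Weyl integral using the holomorphy of $a$ and $b_\ell$ (the Kuranishi trick of Lemma~\ref{lem:kur1}), stationary phase to reduce matters to a hierarchy of eikonal/transport equations, quantization of $\lambda_{n,3}$ from smooth solvability of the singular first-order ODE at $x_\ell$, solvability conditions at each subsequent order to fix $\lambda_{n,j+3}$, Borel summation, and exponential smallness of the cutoff error. The only step you gloss over is the lower bound on $\lVert \Psi^{\mathrm{WKB}}_{\ell,n}\rVert$ by a power of $h$ (Laplace method, Lemma~\ref{lem:lap} in the paper), which is needed to convert the absolute $\mathcal{O}(h^\infty)$ residual into $\mathcal{O}(h^{\infty})\lVert \Psi^{\mathrm{WKB}}_{\ell,n}\rVert$ when $u_{n,0}$ vanishes at $x_\ell$ for $n\geq 2$.
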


\subsection{Proof of Lemma \ref{spdisc}}\label{sec.spdisc}

	For all $z\leq C$, we have 
	\[\mathscr{L}_{h,\ell}-zh=\left[a(\xi)+h(b_\ell(x,\xi)-z)\right]^w\,.\]
For all $h, K >0$ satisfying $h K \leq 1$, since $a(\xi)^w$ is a positive operator, in the sense of quadratic forms,
\[ \mathscr{L}_{h,\ell} - zh \geq \left[h K a(\xi)+h(b_\ell(x,\xi)-z)\right]^w\, .\]	
Therefore for $K$ fixed large enough there exists $\psi(x,\xi) \in \mathscr{C}^{\infty}_0(\R^2)$ such that
\[\exists \epsilon >0, \quad K a(\xi) + b_\ell(x,\xi) - z + \psi(x,\xi) \geq \epsilon  \, .\]
This allows to apply the semiclassical Gärding inequality \eqref{eq:semgarding} to obtain
\[\mathscr{L}_{h,\ell} - zh + h\psi^w \geq \epsilon h/2. \]
Eventually the selfadjoint operator $\mathscr{L}_{h,\ell}-hz+h\psi^w$ is bijective. Since $\psi^w$ is compact, it follows that $\mathscr{L}_{h,\ell}-hz$ is Fredholm with index $0$. The discreteness of the spectrum below $Ch$ follows from the analytic Fredholm theory.

\begin{remark}
The proof of Lemma \ref{spdisc} can easily be adapted to the double well operator $\mathscr{L}_h$. Therefore, it indeed makes sense to study the spectral gap. 
\end{remark}
\subsection{Proof of Proposition \ref{prop.quasimodes}}\label{sec.22}

\subsubsection{WKB expansions}

Proposition \ref{prop.quasimodes} is essentially a consequence of the following, stronger one.

	\begin{proposition}[WKB constructions]\label{prop.WKB}
	 Let $n \in \N^*$. There exist a family of smooth functions $(u_{n,j})_{j\geq 0}$ and a family of real numbers $(\lambda_{n,j})_{j\geq 3}$ such that the following holds. Let $J\geq 1$ and $\chi \in \mathscr{C}_0^{\infty}(\R)$ equal to $1$ on a segment $I$ containing $x_\ell$. Letting
	\[ u^{[J]}_{n,h}=\sum_{j=0}^J h^{\frac{j}{2}}u_{n,j}\,,\qquad \lambda_n^{[J]}(h)=\sum_{j=3}^{J+3}\lambda_{n,j}h^{\frac{j}{2}}\,,\]
we have
	\begin{equation}\label{expan}
\left\| e^{\frac{\Phi_\ell}{\sqrt{h}}}\left(\mathscr{L}_{h,\ell}-\lambda_n^{[J]}(h)\right)\left(\chi u^{[J]}_{n,h} e^{-\frac{\Phi_\ell}{\sqrt{h}}}\right)\right\|_{ L^{\infty}(I)}=\mathcal{O}(h^{\frac{J+4}{2}})\,.
\end{equation}

Explicitely,
$\lambda_{n,3}=(2n-1)c_0$ and we can take
\begin{equation} 
\label{eq.princip}
\begin{aligned}
 u_{1,0}(x) & = \left( \frac{\Phi_\ell''(x_\ell)}{\pi} \right)^{1/4}\exp \left(-\int_{x_\ell}^x \left(\frac{\Phi_{\ell}''(s) - \Phi_{\ell}''(x_\ell)}{2\Phi_\ell'(s)} +\frac{i}{a''(0)}\partial_\xi b(s,0)\right)\mathrm{d}s \right). 
\end{aligned}
\end{equation}
\end{proposition}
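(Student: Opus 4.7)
My strategy is a standard WKB construction adapted to the pseudo-differential setting, exploiting the holomorphic extensions of the symbols (Assumptions~\ref{hyp:a} and~\ref{hyp:b}) to make the exponential conjugation rigorous. The plan is to rewrite $\mathscr{L}_{h,\ell}$ in the semiclassical scaling $\hbar = \sqrt{h}$, conjugate by the weight $e^{\Phi_\ell/\hbar}$, expand the resulting operator in powers of $\hbar$ via stationary phase, and then solve the hierarchy of eikonal and transport equations successively for $(u_{n,j})$ and $(\lambda_{n,j})$.

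More precisely, one writes $\mathscr{L}_{h,\ell} = \mathrm{Op}_\hbar^w\!\bigl(a(\hbar\xi) + \hbar^2 b_\ell(x,\hbar\xi)\bigr)$ and applies this to $u(y)\,e^{-\Phi_\ell(y)/\hbar}$. Multiplying by $e^{\Phi_\ell(x)/\hbar}$ yields a Weyl integral with phase $\bigl(i(x-y)\eta + \Phi_\ell(x) - \Phi_\ell(y)\bigr)/\hbar$. Introducing $\Phi_\ell^{(1,0)}(x,y) = \int_0^1 \Phi_\ell'(tx + (1-t)y)\,\mathrm{d}t$, so that $\Phi_\ell(x) - \Phi_\ell(y) = (x-y)\Phi_\ell^{(1,0)}(x,y)$, I would use the holomorphy of $a$ and $b$ in $\Sigma_r$ to shift the $\eta$-contour to $\{\mathrm{Im}\,\eta = \Phi_\ell^{(1,0)}(x,y)\}$, which is permissible for $\hbar$ small enough on the support of $\chi$. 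The phase becomes the purely oscillatory $(x-y)\zeta/\hbar$, and the effective symbol becomes $a(\hbar\zeta + i\hbar\Phi_\ell^{(1,0)}) + \hbar^2 b_\ell((x+y)/2, \hbar\zeta + i\hbar\Phi_\ell^{(1,0)})$; this rigorous contour-deformation step is the content of Appendix~\ref{sec.A}.

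Stationary phase (Appendix~\ref{sec.B}) around the unique critical point $(y,\zeta) = (x,0)$ then gives an asymptotic expansion $e^{\Phi_\ell/\hbar}\mathscr{L}_{h,\ell}\,e^{-\Phi_\ell/\hbar}\,u \sim \sum_{k \geq 0}\hbar^k L_k u$. Since $a(0) = a'(0) = 0$ and $a$ is even, $L_0 = L_1 = 0$ and the eikonal coefficient
\begin{equation*}
	L_2 u = \Bigl(b_\ell(x,0) - \tfrac{a''(0)}{2}(\Phi_\ell'(x))^2\Bigr)u
\end{equation*}
vanishes identically by the very definition~\eqref{eq:phiell} of $\Phi_\ell$. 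Combining the order-$\hbar^3$ contribution of the symbol at the critical point with the first stationary phase correction applied to the order-$\hbar^2$ symbol term produces the transport operator
\begin{equation*}
	L_3 = a''(0)\Phi_\ell'(x)\partial_x + \tfrac{a''(0)}{2}\Phi_\ell''(x) + i\Phi_\ell'(x)\partial_\xi b_\ell(x,0)\,,
\end{equation*}
where the last, purely imaginary term is the genuinely pseudo-differential contribution arising from the $\xi$-dependence of $b$.

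Equating orders $\hbar^3$ in $(\mathscr{L}_{h,\ell} - \lambda_n^{[J]}(h))u_{n,h}^{[J]}\,e^{-\Phi_\ell/\hbar}$ then gives $(L_3 - \lambda_{n,3})u_{n,0} = 0$, a first-order ODE whose coefficient $a''(0)\Phi_\ell'(x)$ vanishes linearly at $x_\ell$. Using $c_0 = \tfrac{a''(0)}{2}\Phi_\ell''(x_\ell)$ (which follows from differentiating twice the eikonal equation at $x_\ell$), the indicial equation at this Fuchsian point has a smooth solution (with leading order $(x-x_\ell)^{n-1}$) if and only if $\lambda_{n,3} = (2n-1)c_0$; the solution $u_{n,0}$ is then determined up to a normalization constant, and explicit integration in the case $n=1$ yields~\eqref{eq.princip}. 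The higher-order equations are iterative and take the form $(L_3 - \lambda_{n,3})u_{n,j} = F_{n,j}$, where $F_{n,j}$ is an explicit expression in $(u_{n,i})_{i<j}$ and $(\lambda_{n,i})_{i\leq j+2}$; the compatibility condition at $x_\ell$ for this inhomogeneous Fuchsian ODE fixes $\lambda_{n,j+3}$ uniquely, after which $u_{n,j}$ is integrated locally around $x_\ell$ and extended smoothly to an element of $S(\R)$ (the extension is immaterial since only values on $I \supset \mathrm{supp}\,\chi$ enter \eqref{expan}). The uniform $\mathcal{O}(\hbar^{J+4})$ remainder of the stationary phase expansion on $I$ then yields~\eqref{expan}. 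The main obstacle is to control the conjugated operator in $L^\infty(I)$ rigorously: the holomorphy of $a$ and $b$ is essential both to justify the $\eta$-contour shift on the whole support of $\chi$ and to obtain the required uniform remainder estimates, which is precisely what is deferred to Appendices~\ref{sec.A} and~\ref{sec.B}.
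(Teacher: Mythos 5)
Your proposal is correct and follows essentially the same route as the paper: the contour shift in $\eta$ by $i\Phi_\ell^{(1,0)}(x,y)$ is exactly the Kuranishi trick of Lemma~\ref{lem:kur1}/\ref{lem.A3}, the stationary phase expansion reproduces the hierarchy $P_0=P_1=P_2=0$, $P_3=L_3$ of Lemma~\ref{lem:apxl}, and the Fuchsian analysis of the transport equations at $x_\ell$ (quantization condition $\lambda_{n,3}=(2n-1)c_0$, leading behavior $(x-x_\ell)^{n-1}$, compatibility conditions fixing the $\lambda_{n,j+3}$) matches the paper's treatment of \eqref{WKBeq}. No gaps.
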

\begin{proof}
In the following, we drop the reference to $n$.
Setting $\mathscr{L}^{\Phi_\ell}_{h,\ell} := e^{\frac{\Phi_{\ell}}{\sqrt{h}}} \mathscr{L}_{h,\ell} e^{-\frac{\Phi_{\ell}}{\sqrt{h}}}$, we want to find functions $u_j$ and numbers $\lambda_j$ such that
\begin{equation}
\label{WKB}
(\mathscr{L}^{\Phi_\ell}_{h,\ell} -\lambda^{[J]}(h))(\chi u^{[J]}_h) = \mathcal{O}_{L^\infty(I)}(h^{\frac{J+4}{2}})\,.
 \end{equation}
Thanks to Lemma \ref{lem:apxl}, the action of $\mathscr{L}_{h,\ell}^{\Phi_\ell}$ on $\mathscr{C}^\infty_0(\R)$ is that of a series of differential operators: there exists a family of differential operators $(P_{\gamma}(x,D_x))_{0 \leq \gamma \leq J+3}$ such that
\[\mathscr{L}_{h,\ell}^{\Phi_\ell} (\chi u_h^{[J]}) =  \sum_{0 \leq \gamma \leq J+3} h^{\gamma/2}P_{\gamma}(x,D_x)(\chi u_h^{[J]}) + \mathcal{O}_{L^\infty(\R)}(h^{\frac{J+4}{2}})\,,\]
where the first few operators are explicitely given by
\begin{equation}
\label{eq:opdiff}
\left \{ \begin{array}{lll}
P_0(x,D_x) = a(0) = 0, \\
P_1(x,D_x) = i \Phi_\ell'(x) a'(0) = 0, \\
P_2(x,D_x) = -\frac{a''(0)}{2} \Phi_\ell'(x)^2 + b_\ell(x,0) = 0, \\
 P_3(x,D_x)= i \Phi_\ell'(x)\partial_\xi b_\ell(x,0) + \frac{a''(0)}{2} \Phi_\ell''(x) +  a''(0)\Phi_\ell'(x) \partial_x\,. \end{array} \right.
\end{equation}
Therefore $(\mathscr{L}_{h,\ell}^{\Phi}-\lambda^{[J]}(h)) (\chi u_h^{[J]}) =  \mathcal{O}_{ L^\infty(I)}(h^{\frac{J+4}{2}})$ if and only if $\lambda_0 = \lambda_1 = \lambda_2 = 0$ and the $(\lambda_j,u_j)$ satisfy the transport equations
\begin{equation}
\label{WKBeq}
\left \{ \begin{array}{ll}
(P_3(x,D_x)-\lambda_3) u_0 = 0, \\
(P_3(x,D_x)-\lambda_3) u_j = b_j + \lambda_{j+3} u_0\,,\quad j \geq 1, \end{array} \right. 
\end{equation}
where $b_j := \displaystyle -P_{j+3}(x,D_x)u_0-\sum_{\gamma = 4}^{j+2} (P_{\gamma}(x,D_x)-\lambda_\gamma)u_{j-\gamma+3}$.

Let us start by solving the first transport equation, which can be written as
\begin{equation}
 \left(ir(\lambda_3,x) + \Phi_{\ell}''(x) \left( \frac{1}{2}-\frac{\lambda_3}{a''(0)\Phi_{\ell}''(x_\ell)} \right)+\Phi_{\ell}'(x) \partial_x\right)u_0 = 0\,,
\end{equation}
where 
\[ \displaystyle r(\lambda_3,x) = \frac{1}{a''(0)} \left( \Phi_\ell'(x)\partial_\xi b_\ell(x,0) -i \lambda_3 \frac{\Phi_{\ell}''(x)-\Phi_{\ell}''(x_\ell)}{\Phi_{\ell}''(x_\ell)} \right).\] It satisfies that $x \longmapsto \frac{r(\lambda_3,x)}{\Phi_{\ell}'(x)}$ is in $\mathscr{C}^{\infty}(\R)$. Since $\Phi_{\ell}'$ vanishes linearly at $x_\ell$, for every $\lambda_3 \in \R$, the differential equation $(P_3(x,D_x)-\lambda_3) u_0 = 0$ can be solved on $]x_{\ell},+\infty[$ and on $]-\infty,x_{\ell}[$. In both cases, the space of solutions is spanned by the function
\begin{equation}
\label{eq:vect}
 x \longmapsto \Phi_{\ell}'(x)^{\frac{\lambda_3}{a''(0)\Phi_{\ell}''(x_\ell)}-\frac 12} \exp\left(-i \int_{x_{\ell}}^x \frac{r(\lambda_3,s)}{\Phi_{\ell}'(s)}\mathrm{d}s\right) \,.
 \end{equation} 
 Therefore, there is a smooth solution on $\R$ if and only if there exists $n\in\mathbb{N}_{\geq 1}$ such that
\begin{equation}
\label{quantization}
 \frac{\lambda_3}{a''(0)\Phi_{\ell}''(x_\ell)}-\frac 12=n-1\,.
 \end{equation}
 From now on, we take $\lambda_3=\lambda_{n,3}=(2n-1)c_0$. With this choice, \eqref{eq:vect} becomes
\begin{equation}
\label{eq:monomial}
x \mapsto \Phi_{\ell}'(x)^{n-1} \exp\left(\frac{-i}{a''(0)}\int_{x_\ell}^x \partial_\xi b(s,0)\mathrm{d}s\right)\exp\left(-(2n-1) \int_{x_{\ell}}^x \frac{\Phi_\ell''-\Phi_\ell''(x_\ell)}{2\Phi_{\ell}'}\mathrm{d}s\right)\,. 
\end{equation}
The other transport equations in \eqref{WKBeq} become
\begin{equation} \label{transp}
\left(ir(\lambda_{n,3},x)-(n-1) \Phi_\ell''(x) + \Phi_{\ell}'(x) \partial_x \right) u_{n,j} = b_{n,j}+\lambda_{n,j+3} u_{n,0}\,.
\end{equation}
Letting $\widetilde{u}_{n,j} := u_{n,j} \exp \left(i \int_{x_{\ell}}^x \frac{r(\lambda_{n,3},s)}{\Phi_{\ell}'(s)}\mathrm{d}s \right)$ and $\widetilde{b}_{n,j} := b_{n,j} \exp \left(i \int_{x_{\ell}}^x \frac{r(\lambda_{n,3},s)}{\Phi_{\ell}'(s)}\mathrm{d}s \right)$, the previous equation \eqref{transp} becomes 
\begin{equation} \label{transp2}
\left(-(n-1) \Phi_\ell''(x) + \Phi_{\ell}'(x) \partial_x \right) \widetilde{u}_{n,j} = \widetilde{b}_{n,j}+\lambda_{n,j+3} \widetilde{u}_{n,0}\,.
\end{equation}

Here again, by the theory of ODEs, the solutions in $(-\infty,x_\ell)$ and $(x_\ell,+\infty)$ form one dimensional vector spaces. To prove that there exists global solutions it suffices to prove that there exists a $\mathscr{C}^{\infty}$ solution well defined near $x_\ell$.

The Taylor formula ensures the existence of $(\alpha_{n,j}^k)_{0\leq k\leq n-1}\in\R^n$ and of $r_{n,j} \in \mathscr{C}^{\infty}(\mathrm{Neigh}(x_\ell))$ such that, near $x_\ell$,
\[ \widetilde{b}_{n,j}(x) = \left(\alpha_{n,j}^0 + \alpha_{n,j}^1 (\Phi'_{\ell}(x))^2 + \cdots + \alpha_{n,j}^{n-1} (\Phi'_{\ell}(x))^{n-1} + r_{n,j}(x) (\Phi_{\ell}'(x))^n\right) \Phi_{\ell}''(x)\,. \]
Setting $\widetilde{v}_{n,j} = \sum_{k=0}^{n-2} \frac{\alpha_{n,j}^k}{k-(n-1)}(\Phi_\ell')^k$ and choosing $\lambda_{n,j+3} = -\alpha_{n,j}^{n-1}$, we get
\begin{equation}
\left(-(n-1) \Phi_\ell'' + \Phi_{\ell}' \frac{\mathrm{d}}{\mathrm{d}x} \right) (\widetilde{u}_{n,j}-\widetilde{v}_{n,j}) = (\Phi_{\ell}')^{n} \widetilde{r}_{n,j}\hbox{ with } \widetilde{r}_{n,j} \in \mathscr{C}^{\infty}(\mathrm{Neigh}(x_\ell)).
\end{equation}
 This last equation has smooth solutions of the form $(\Phi_{\ell}')^n \widetilde{s}_{n,j}$ in a neighbourhood of $x_\ell$. This proves that the equations (\ref{WKBeq}) have smooth solutions on $\R$ and concludes the proof.
\end{proof}

	\subsubsection{Proof of Proposition \ref{prop.quasimodes}}

	 For all $n \in \N_{\geq 1}$, the Borel lemma provides us with  
	\begin{enumerate}[---]
	\item a smooth function $u_{n}(\cdot,h)$ having the asymptotic expansion $\sum_{j\geq 0} u_{n,j}(x)h^{\frac{j}{2}}$, 
	\item a number $\lambda_n^{\mathrm{WKB}}(h)$  having the asymptotic expansion $\sum_{j\geq 3} \lambda_{n,j}h^{\frac{j}{2}}$,
	\end{enumerate}
	with the $u_{n,j}$ and $\lambda_{n,j}$ given by Proposition \ref{prop.WKB}. Let us recall that $\Psi_{\ell,n}^{\mathrm{WKB}}$ is defined in \eqref{eq.wkbqm}. We first deal with $n=1$,
		\begin{align*}
			\| \Psi^{\mathrm{WKB}}_{\ell,1} \|^2 = h^{-\frac14}\int_{\R} \chi(x)^2 |u_{1,h}(x)|^2 e^{-\frac{2\Phi_\ell(x)}{\sqrt{h}}}\mathrm{d}x\,.
		\end{align*}
We recall that, on the support of $\chi$, we have $u_{1,h}= u_{1,0} + \mathcal{O}(h^{\frac12})$ and we apply the Laplace method to get
		\begin{align*}
			\|\Psi_\ell^{\mathrm{WKB}}\|^2= \sqrt{\frac{\pi}{\Phi_\ell''(x_\ell)}} |u_{1,0}(x_\ell)|^2 + \mathcal{O}(h^\frac12) = 1 + \mathcal{O}(h^\frac12)\,,
		\end{align*}
		where we used \eqref{eq.princip} (especially the normalization constant). Thanks to Proposition \ref{prop.WKB}, we obtain
		 \[\left\|\left(\mathscr{L}_{h,\ell}-\lambda^{\mathrm{WKB}}_{1}(h))\right)\Psi^{\mathrm{WKB}}_{\ell}\right\|_{L^2(I)}=\mathcal{O}(h^{\infty})\,.\] 
		Then, we have
\[\begin{split}
		 	\left\|\left(\mathscr{L}_{h,\ell} \right. \right. -& \left. \left. \lambda^{\mathrm{WKB}}_{1}(h)\right)\Psi^{\mathrm{WKB}}_{\ell}\right\|_{L^{2}(\complement I)} =  \left\|e^{-\frac{\Phi_\ell}{\sqrt{h}}} (\mathscr{L}_{h,\ell}^{\Phi_\ell}-\lambda^{\mathrm{WKB}}_1(h)) (h^{-\frac18}\chi u_{1,h})\right\|_{L^{2}(\complement I)}\\
		 	&\leq\left\|e^{-\frac{\Phi_\ell}{\sqrt{h}}}\right\|_{L^\infty(\complement I)}\left\| (\mathscr{L}_{h,\ell}^{\Phi_\ell}-\lambda^{\mathrm{WKB}}_1(h)) (h^{-\frac18}\chi u_{1,h})\right\|_{L^{2}(\complement I)}=\mathcal{O}(h^\infty)\,,
		 	\end{split}\]
	where we used $\min_{\complement I} \Phi_\ell>0$ and Lemma \ref{lem:apxl} for the last estimate. By using the spectral theorem, this establishes Proposition \ref{prop.quasimodes} when $n=1$. 
	
	For $n\geq 2$, the estimate follows in the same way except for the estimate of the $L^2$-norm where we use Lemma \ref{lem:lap} (to deal with the fact that $u_{n,0}$ vanishes at $x_\ell$), which gives $ \| \Psi^{\mathrm{WKB}}_{\ell,n} \| \geq c h^{\tfrac{n-1}{4}}$ and proves
\[\left\|\left(\mathscr{L}_{h,\ell}-\lambda^{\mathrm{WKB}}_n(h)\right)\Psi^{\mathrm{WKB}}_{\ell,n}\right\|=\mathcal{O}\bigl(h^{\infty- \tfrac{n-1}{4}}\bigr)\lVert \Psi_{\ell,n}^{\mathrm{WKB}} \rVert \,.\]

\section{First tunneling estimate} \label{sec:3}
 This section is devoted to establishing the following two propositions.

\begin{proposition}[Microlocal harmonic approximation]\label{prop.simplewell}
	We have, for all $n\geq 1$,
	\[\lambda_n(\mathscr{L}_{h,\ell})=(2n-1)c_0h^{\frac32}+o(h^{\frac32})\,.\]	
\end{proposition}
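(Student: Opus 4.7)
For the upper bound $\lambda_n(\mathscr{L}_{h,\ell}) \leq (2n-1)c_0 h^{3/2} + o(h^{3/2})$, I would apply the min-max principle to the family of WKB quasimodes $(\Psi_{\ell,k}^{\mathrm{WKB}})_{1\leq k \leq n}$ provided by Proposition~\ref{prop.quasimodes}. By \eqref{eq:monomial}, the leading profile $u_{k,0}$ is the product of a non-vanishing prefactor and a polynomial in $\Phi_\ell'(x)$ of degree $k-1$, and $\Phi_\ell'$ vanishes linearly at $x_\ell$, so the Laplace method applied to the Gram matrix shows that this family is asymptotically orthonormal. On the $n$-dimensional space they span, the quadratic form of $\mathscr{L}_{h,\ell}$ is, by Proposition~\ref{prop.quasimodes}, bounded above by $\lambda_n^{\mathrm{WKB}}(h) + \mathcal{O}(h^\infty) = (2n-1)c_0 h^{3/2} + o(h^{3/2})$, whence the upper bound.

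For the lower bound, I rely on the microlocalization Lemma~\ref{lem.locamodes} announced in Section~\ref{sec.orga}: any normalized eigenfunction $\psi$ of $\mathscr{L}_{h,\ell}$ with eigenvalue $\lambda \leq C h^{3/2}$ satisfies $\chi^w \psi = \psi + \mathcal{O}(h^\infty)\|\psi\|$ for any cutoff $\chi \in \mathscr{C}_0^\infty(\R^2)$ equal to $1$ on a fixed neighbourhood of $(x_\ell,0)$. Using the rescaling identity from Section~\ref{sec.orga} with $\hbar = h^{1/2}$, together with the Taylor expansion
\[ a(\hbar\xi) + h\, b_\ell(x, \hbar \xi) = h\Bigl(\tfrac{a''(0)}{2}\xi^2 + b_\ell(x,0)\Bigr) + \mathcal{O}(h^{3/2}) \]
valid on the microlocal region $|\xi| \leq \delta$, one sees that on such microlocalized states the quadratic form of $\mathscr{L}_{h,\ell}$ coincides, modulo $\mathcal{O}(h^{3/2})\|\psi\|^2$, with $h$ times that of the Schr\"odinger operator $\mathscr{M}_{\hbar,\ell} := -\tfrac{a''(0)}{2}\hbar^2\partial_x^2 + b_\ell(x,0)$. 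Since $b_\ell(\cdot,0)$ has a unique non-degenerate minimum at $x_\ell$, the classical harmonic approximation for one-dimensional Schr\"odinger operators gives $\lambda_n(\mathscr{M}_{\hbar,\ell}) = (2n-1)c_0 \hbar + o(\hbar)$, and the min-max principle applied to the first $n$ eigenspaces of $\mathscr{L}_{h,\ell}$ then delivers the matching lower bound.

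The main obstacle is the $x$-part of Lemma~\ref{lem.locamodes}. The $\xi$-part is relatively soft: since $a(\xi)$ is bounded below by a positive constant on $\{|\xi| \geq \delta\}$ by Assumption~\ref{hyp:a}, a G\aa rding estimate forces the $\xi$-microsupport of $\psi$ to concentrate at $0$. In contrast, the confining effect of $b_\ell(x,0) - b_\ell(x_\ell, 0)$ enters $\mathscr{L}_{h,\ell}$ only at subprincipal order (multiplied by $h$), while the eigenvalues are of size $h^{3/2}$; this leaves only an $h^{1/2}$ margin to trap $x$ near $x_\ell$. Extracting it requires the Fefferman-Phong inequality (Appendix~\ref{apx.1}) applied to the symbol $a(\xi) + h\bigl(b_\ell(x,\xi) - C h^{1/2}\bigr)$ after microlocalizing to $\{|\xi| \leq \delta\}$, in the spirit of the proof of Lemma~\ref{spdisc} but with refined book-keeping of the $h^{1/2}$-accurate subprincipal term.
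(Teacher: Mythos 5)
Your overall architecture (upper bound from the WKB quasimodes via min-max, lower bound by microlocalizing the eigenfunctions and comparing with an effective Schr\"odinger operator treated by harmonic approximation) is the same as the paper's; in fact the upper bound is already contained in Proposition~\ref{prop.quasimodes}, so only the lower bound is at stake. But your lower bound as written has a genuine quantitative gap. You replace $a(\hbar\xi)+h b_\ell(x,\hbar\xi)$ by $h\bigl(\tfrac{a''(0)}{2}\xi^2+b_\ell(x,0)\bigr)$ up to an error $\mathcal{O}(h^{3/2})\|\psi\|^2$ on a region $|\xi|\leq\delta$ with $\delta$ \emph{fixed}. Since the eigenvalues themselves are of size $h^{3/2}$, an $\mathcal{O}(h^{3/2})$ error in the quadratic form only yields $\lambda_n(\mathscr{L}_{h,\ell})\asymp h^{3/2}$, not the asymptotics with the constant $(2n-1)c_0$; you need the comparison error to be $o(h^{3/2})$. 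The troublesome term is the cross term $h\,\xi\,\partial_\xi b_\ell(x,0)$ (plus higher order), which on the critical scale $|\xi|\sim h^{1/2}$ is exactly of size $h^{3/2}$ and cannot simply be discarded. The paper resolves this by (i) microlocalizing in $\xi$ at the \emph{shrinking} scale $h^{\delta}$ (Lemma~\ref{lem.locamodes} with $\rho_\delta(\xi)=\rho(h^{-\delta}\xi)$), and (ii) absorbing the cross term via Young's inequality, $2h|\xi r|\leq h^{\delta}\xi^2+h^{2-\delta}|r|^2$, i.e.\ into a relative perturbation $(1-\widetilde Ch^{\delta})$ of the kinetic energy plus an absolute error $Ch^{2-\delta}$; after the Fefferman--Phong inequality in $S_\delta$ the total error is $\mathcal{O}(h^{2-4\delta})$, which is $o(h^{3/2})$ precisely for $\delta<\tfrac18$. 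Without this absorption step your argument does not close.

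A secondary point: you identify the $x$-localization of Lemma~\ref{lem.locamodes} as the main obstacle, but for this proposition only the $\xi$-localization is used (both to replace the symbol by $[a+hb_\ell]\rho_\delta^2$ and to guarantee $\dim(\rho_\delta^w E)=n$ for the min-max comparison with $Q_h^{\mathrm{elec}}$). The $x$-localization, which indeed requires Fefferman--Phong applied to the subprincipal symbol, is needed later for the rough tunneling estimate of Proposition~\ref{prop.doublewell}, not here. Your sketch of the $\xi$-localization as a soft G\aa rding argument is also too coarse: it must be carried out with the $h^{-\delta}$-dilated cutoff and a parametrix in $S_{3\delta}$, as in the paper, to be compatible with the scales above.
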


\begin{proposition}[Rough tunneling estimate]\label{prop.doublewell}
We have
\[\lambda_2(\mathscr{L}_h)-\lambda_1(\mathscr{L}_h)=\mathcal{O}(h^\infty)\,, \quad \lambda_1(\mathscr{L}_h)=\lambda_1(\mathscr{L}_{h,\ell})+\mathcal{O}(h^\infty)\, ,\]
and for all $c\in(0,c_0)$, letting $h$ small enough, $\lambda_3(\mathscr{L}_{h})-\lambda_2(\mathscr{L}_h)\geq ch^{\frac32}$.
\[\,.\]
\end{proposition}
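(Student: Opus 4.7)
The plan is to match the bottom of the spectrum of $\mathscr{L}_h$ with that of $\mathscr{L}_{h,\ell}\oplus \mathscr{L}_{h,r}$ modulo $\mathcal{O}(h^\infty)$. Since $\mathscr{L}_{h,r}=U\mathscr{L}_{h,\ell}U^*$, this amounts to a doubling of $\sigma(\mathscr{L}_{h,\ell})$, the bottom of which is described in Proposition~\ref{prop.simplewell}. The comparison proceeds through a quasimode upper bound and a microlocalization-based lower bound, in the spirit of Helffer--Sjöstrand.

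For the upper bound, I would use the true one-well eigenfunctions rather than the WKB quasimodes (the latter give only $o(h^{3/2})$ approximation via Proposition~\ref{prop.quasimodes}, not $\mathcal{O}(h^\infty)$). Let $\psi_{\ell,n}$ be a normalized eigenfunction of $\mathscr{L}_{h,\ell}$ for $\lambda_n(\mathscr{L}_{h,\ell})$ and set $\psi_{r,n}:=U\psi_{\ell,n}$. By Lemma~\ref{lem.locamodes}, $\psi_{\ell,n}$ is microlocalized at $(x_\ell,0)$, so $\chi\,\psi_{\ell,n}=\mathcal{O}_{L^2}(h^\infty)$ for any $\chi\in\mathscr{C}_0^\infty(\R)$ vanishing near $x_\ell$. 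Taking $\chi\equiv 1$ on $\mathrm{supp}\,k_\ell\subset D(x_r,\eta)$ yields $k_\ell\psi_{\ell,n}=\mathcal{O}_{L^2}(h^\infty)$, whence
\[
(\mathscr{L}_h-\lambda_n(\mathscr{L}_{h,\ell}))\psi_{\ell,n}=(\mathscr{L}_{h,\ell}-\lambda_n(\mathscr{L}_{h,\ell}))\psi_{\ell,n}-k_\ell\psi_{\ell,n}=\mathcal{O}_{L^2}(h^\infty),
\]
and similarly for $\psi_{r,n}$. Each family being concentrated near a distinct point, $\langle\psi_{\ell,n},\psi_{r,m}\rangle=\mathcal{O}(h^\infty)$. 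The spectral theorem combined with a Gram--Schmidt step then gives $\lambda_{2n}(\mathscr{L}_h)\leq \lambda_n(\mathscr{L}_{h,\ell})+\mathcal{O}(h^\infty)$ for every $n\geq 1$.

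For the matching lower bound, I would adapt Lemma~\ref{lem.locamodes} to the full operator $\mathscr{L}_h$: any eigenfunction $u$ with eigenvalue $\lambda\leq Ch^{3/2}$ (for some fixed $C<3c_0$) is microlocalized in $\{(x_\ell,0),(x_r,0)\}$. The proof is structurally identical to the one-well case, the sole change being that $b(\cdot,0)$ now vanishes at two points, so the Fefferman--Phong inequality is applied with a cutoff that vanishes near both. Picking $\chi_\ell,\chi_r\in\mathscr{C}_0^\infty(\R)$ with $\chi_r=\chi_\ell(-\cdot)$, $\chi_\ell+\chi_r=1$ in a neighbourhood of this microsupport, and $\mathrm{supp}\,\chi_\ell\cap\mathrm{supp}\,k_\ell=\varnothing$, one checks that $\chi_\ell u$ and $\chi_r u$ are $\mathcal{O}(h^\infty)$ quasimodes of $\mathscr{L}_{h,\ell}$ and $\mathscr{L}_{h,r}$ respectively at energy $\lambda$: the commutator $[\mathscr{L}_h,\chi_\ell]u$ is $\mathcal{O}(h^\infty)$ since the symbol of $[\mathscr{L}_h,\chi_\ell]$ vanishes on the microsupport of $u$, and $k_\ell\chi_\ell u=0$ by construction. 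This forces $\mathrm{dist}(\lambda,\sigma(\mathscr{L}_{h,\ell}))=\mathcal{O}(h^\infty)$, and by the same projection argument at most two eigenvalues of $\mathscr{L}_h$ can cluster within $\mathcal{O}(h^\infty)$ of any given simple $\lambda_n(\mathscr{L}_{h,\ell})$ (their projections onto the at-most two-dimensional spectral subspace of $\mathscr{L}_{h,\ell}\oplus\mathscr{L}_{h,r}$ at $\lambda_n$ must remain independent).

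Combining both bounds with Proposition~\ref{prop.simplewell} finishes the argument. The upper bound places $\lambda_1(\mathscr{L}_h),\lambda_2(\mathscr{L}_h)\leq \lambda_1(\mathscr{L}_{h,\ell})+\mathcal{O}(h^\infty)$, and the lower bound forces each to lie within $\mathcal{O}(h^\infty)$ of $\lambda_1(\mathscr{L}_{h,\ell})$, since the next eigenvalue of $\mathscr{L}_{h,\ell}$ sits at $3c_0 h^{3/2}+o(h^{3/2})$. This yields both $\lambda_1(\mathscr{L}_h)=\lambda_1(\mathscr{L}_{h,\ell})+\mathcal{O}(h^\infty)$ and $\lambda_2(\mathscr{L}_h)-\lambda_1(\mathscr{L}_h)=\mathcal{O}(h^\infty)$. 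By the ``at most two per level'' clustering, $\lambda_3(\mathscr{L}_h)$ must then lie within $\mathcal{O}(h^\infty)$ of $\lambda_2(\mathscr{L}_{h,\ell})$, so $\lambda_3(\mathscr{L}_h)-\lambda_2(\mathscr{L}_h)\geq 2c_0 h^{3/2}-o(h^{3/2})\geq ch^{3/2}$ for any $c\in(0,c_0)$ and $h$ small. The main technical obstacle is the two-well adaptation of Lemma~\ref{lem.locamodes}: the $\xi$-localization is immediate since $a(\xi)$ is bounded below away from $\xi=0$, but the $x$-localization calls for a double-well version of the subprincipal Fefferman--Phong argument, together with a careful choice of cutoffs compatible with $\mathrm{supp}\,k_\ell$.
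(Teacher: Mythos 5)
Your argument is correct, and its first half coincides with the paper's: the paper also takes the true one-well eigenfunctions $\psi_{n,\ell}$, $\psi_{n,r}=U\psi_{n,\ell}$, uses the localization of Lemma \ref{lem.locamodes} to get $k_\ell\psi_{n,\ell}=\mathcal{O}(h^\infty)$ and the near-orthogonality of the pair, and deduces the upper bound $\lambda_n(\mathscr{L}_h)\leq\lambda_n(\mathscr{L}_{h,\ell}\oplus\mathscr{L}_{h,r})+\mathcal{O}(h^\infty)$. Where you diverge is the lower bound. The paper argues variationally: it restricts the quadratic form $Q$ of $\mathscr{L}_h$ to the span $E$ of the first $n$ eigenfunctions, uses the two-well localization lemma to show $Q(\psi)\geq Q_\oplus(\chi_\ell\psi,\chi_r\psi)+\mathcal{O}(h^\infty)\|\psi\|^2$ with $\psi\mapsto(\chi_\ell\psi,\chi_r\psi)$ injective and almost isometric on $E$, and reads off $\lambda_n(\mathscr{L}_h)\geq(1+\mathcal{O}(h^\infty))\lambda_n(\mathscr{L}_{h,\ell}\oplus\mathscr{L}_{h,r})+\mathcal{O}(h^\infty)$ for every $n$ from the min--max principle for the direct sum. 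You instead show that each low-lying eigenvalue of $\mathscr{L}_h$ yields an $\mathcal{O}(h^\infty)$ quasimode for one of the one-well operators (hence lies $\mathcal{O}(h^\infty)$-close to $\sigma(\mathscr{L}_{h,\ell})$) and then cap each cluster at multiplicity two by projecting onto the two-dimensional eigenspace of $\mathscr{L}_{h,\ell}\oplus\mathscr{L}_{h,r}$. Both routes rest on the same key input --- the double-well adaptation of Lemma \ref{lem.locamodes}, which the paper indeed states as a separate lemma proved ``by the same arguments'' --- and both are sound; the min--max version is a bit more economical since it treats all $n$ at once and dispenses with the cluster-counting step, while yours makes the eigenvalue-by-eigenvalue matching more explicit. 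Two details you should spell out in your version: for the quasimode step you need $\max(\|\chi_\ell u\|,\|\chi_r u\|)$ bounded below, which follows from $(\chi_\ell+\chi_r)u=u+\mathcal{O}(h^\infty)$; and the counting step needs the $h^{3/2}$-gaps of Proposition \ref{prop.simplewell} to control the part of $(\chi_\ell u,\chi_r u)$ orthogonal to the target eigenspace.
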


\subsection{Microlocal harmonic approximation}

\subsubsection{Microlocalization}
\begin{lemma}[Localization of eigenfunctions]\label{lem.locamodes}
	Let $n \in \N_{\geq 1}$ and consider a $n$-th normalized eigenfunction $\psi_{n,\ell}$ of $\mathscr{L}_{h,\ell}$. Let $\rho\in\mathscr{C}^\infty_0(\R)$ such that $\rho=1$ in a neighbourhood of $0$. We denote $\rho_\delta : \R \ni s \longmapsto \rho(h^{-\delta}s)$. Then, we have
\begin{align*}
& \forall \delta \in (0,2/3), ~ \psi_{n,\ell}  = \rho_\delta(\xi)^w \psi_{n,\ell} +\mathcal{O}(h^{\infty})\, ,\\
&\forall \delta \in (0,1/6), \quad	\psi_{n,\ell} = \rho_\delta(x-x_\ell)\psi_{n,\ell} +\mathcal{O}(h^{\infty})\,.
\end{align*}
\end{lemma}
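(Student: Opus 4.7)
The a priori bound $\lambda_n(\mathscr{L}_{h,\ell})=\mathcal{O}(h^{3/2})$, which follows from Proposition \ref{prop.quasimodes} and the min-max principle, will be used throughout; write $\widetilde{\rho}_\delta:=1-\rho_\delta$. For the \emph{$\xi$-localization}, on $\mathrm{supp}\,\widetilde{\rho}_\delta(\xi)$ one has $|\xi|\geq c h^\delta$, so Assumption \ref{hyp:a} yields $a(\xi)\geq c'h^{2\delta}$ and hence
\[a(\xi)+hb_\ell(x,\xi)-\lambda_n\geq \tfrac{c'}{2}h^{2\delta}\quad\text{on }\mathrm{supp}\,\widetilde{\rho}_\delta(\xi),\]
the subprincipal and spectral contributions being absorbed thanks to $\delta<1/2$. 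Using the semiclassical pseudo-differential calculus in the refined symbol class $S_\delta(\R^2)$ (Appendix \ref{apx.1}), I would construct a left parametrix $q_h^w$, with $q_h\in h^{-2\delta}S_\delta(\R^2)$, satisfying
\[q_h\#_h(a+hb_\ell-\lambda_n)=\widetilde{\rho}_\delta(\xi)+r_h,\qquad r_h\in h^{\infty}S(\R^2);\]
applying $q_h^w$ to the eigenequation then gives the first assertion.

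The \emph{$x$-localization} is the delicate half, since the principal symbol $a(\xi)$ does not see $x$ and the confinement must be extracted from the subprincipal $hb_\ell(x,0)$, which has a nondegenerate minimum at $x_\ell$ and satisfies $b_\ell\geq b_\infty>0$ at infinity. Setting $\chi(x):=\widetilde{\rho}_\delta(x-x_\ell)$, I would choose $c_1>0$ small enough so that
\[q(x,\xi):=a(\xi)+h\,b_\ell(x,\xi)-c_1 h^{1+2\delta}\chi(x)^2\geq 0\quad\text{on }\R^2,\]
using $b_\ell(x,0)\geq c(x-x_\ell)^2\geq c'h^{2\delta}$ on $\mathrm{supp}\,\chi$, the Taylor expansion $b_\ell(x,\xi)=b_\ell(x,0)+\mathcal{O}(\xi)$, and absorbing the resulting $\xi$-correction into $a(\xi)\geq c\xi^2$ near zero. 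Since $\chi^2\in S_\delta(\R)$, the symbol $q$ lies in the refined class $S_{\delta,\delta}(\R^2)$, and the Fefferman--Phong inequality in that class (Appendix \ref{apx.1}) gives $q^w\geq -Ch^{2-4\delta}$. Pairing with $\psi_{h,n}$ and using $\langle\mathscr{L}_{h,\ell}\psi_{h,n},\psi_{h,n}\rangle=\lambda_n$ produces
\[c_1 h^{1+2\delta}\|\chi\psi_{h,n}\|^2\leq\lambda_n+Ch^{2-4\delta}=\mathcal{O}\bigl(h^{\min(3/2,\,2-4\delta)}\bigr),\]
which, because $\delta<1/6$, yields an initial polynomial bound $\|\chi\psi_{h,n}\|=\mathcal{O}(h^\kappa)$ with $\kappa>0$.

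Combining the two preliminary localizations, $\psi_{h,n}$ is concentrated microlocally in the box $\{|x-x_\ell|\leq h^\delta,\;|\xi|\leq h^\delta\}$; outside this box, the full symbol $a+hb_\ell-\lambda_n$ is elliptic of order at least $h^{1+2\delta}$, so an elliptic parametrix construction in $S_{\delta,\delta}(\R^2)$, wholly parallel to the $\xi$-step, upgrades the polynomial decay to the claimed $\mathcal{O}(h^\infty)$ bound. The genuine difficulty throughout is the $x$-localization: since it relies on the subprincipal symbol, one is forced to work at the anisotropic scale $h^\delta$, and the constraint $\delta<1/6$ is precisely what ensures that the Fefferman--Phong remainder $h^{2-4\delta}$ stays below the main gain $h^{1+2\delta}$.
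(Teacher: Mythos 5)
Your $\xi$-localization is essentially the paper's argument recast as a parametrix: the paper instead inverts the globally modified operator $\mathscr{L}_{h,\ell}-\lambda_n(h)+\widetilde{\rho}_\delta(\xi)^w$, where $\widetilde{\rho}$ is a \emph{third} cutoff with $(1-\rho)\widetilde{\rho}=0$ (so $\mathrm{supp}\,\widetilde{\rho}_\delta\subset\{\rho_\delta=1\}$), and then kills the extra term by composing pseudo-differential operators with disjoint supports. Either route works here, since the symbol is elliptic at scale $h^{2\delta}$ on $\mathrm{supp}(1-\rho_\delta(\xi))$.

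The $x$-localization is where your argument has a genuine gap. Your Fefferman--Phong/quadratic-form step is correct but only yields $\|(1-\rho_\delta(x-x_\ell))\psi_{h,n}\|=\mathcal{O}(h^{\kappa})$ with a fixed small $\kappa$, and this polynomial bound plays no role in the subsequent upgrade: the final step must be done from scratch, so the intermediate estimate is a detour. The upgrade itself --- ``an elliptic parametrix construction wholly parallel to the $\xi$-step'' --- is precisely the hard part, and it is \emph{not} parallel: on $\{|x-x_\ell|\geq ch^{\delta}\}$ the symbol $a(\xi)+hb_\ell(x,\xi)-\lambda_n$ is bounded below only by $ch^{1+2\delta}$ (and the lower bound degenerates as $\xi\to 0$), so the leading symbol of any parametrix lies in $h^{-1-2\delta}S_{3\delta}(\R^2)$; one must then check that the Moyal expansion still gains a positive power of $h$ at each step and that the disjoint-support composition is $\mathcal{O}(h^\infty)$, which is exactly where the restriction $\delta<\tfrac16$ (i.e.\ $6\delta<1$) is needed --- none of this is addressed. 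The paper's device is to introduce the barrier $h\widetilde{\rho}_\delta(x-x_\ell)$ supported \emph{inside} $\{\rho_\delta(\cdot-x_\ell)=1\}$, use Fefferman--Phong to get $\mathscr{L}_{h,\ell}+h\widetilde{\rho}_\delta(x-x_\ell)-\lambda_n(h)\gtrsim h^{1+2\delta}-h^{2-4\delta}\geq ch^{1+2\delta}$ (again $\delta<\tfrac16$), invert globally, record that the inverse is a pseudo-differential operator with symbol in $h^{-1-2\delta}S_{3\delta}(\R^2)$, and conclude from $(1-\rho_\delta(x-x_\ell))q_h^w\widetilde{\rho}_\delta(x-x_\ell)=\mathcal{O}(h^\infty)$. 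You are missing this auxiliary cutoff and the identification of the inverse's symbol class, which together constitute the actual content of the $x$-localization.
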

\begin{proof}
In order to lighten the notations we will write, only in this proof, $\lambda_n(h) = \lambda_n(\mathscr{L}_{h,\ell})$ and recall that $\lambda_n(h) = \mathcal{O}(h^{\frac32})$. 
Let us begin with the following lower bound on $\mathscr{L}_{h,\ell}$.

\begin{lemma} \label{lem:inegop}
In terms of quadratic forms, for all $\nu >0$
\[ \mathscr{L}_{h,\ell}  \geq \bigl((1-h^\nu)a(\xi) + h b_\ell(x,0) - C h^{2-\nu} \bigr)^w.\]
\end{lemma}
\begin{proof}[Proof of the Lemma]
There exists $r \in S(\R^2)$ such that we have $b_\ell(x,\xi) = b_\ell(x,0) + a(\xi)^{1/2}r(x,\xi)$ with $a(\xi)^{1/2} = \mathrm{sgn}(\xi) \sqrt{a(\xi)} \in S(\R)$. Then for all $\nu >0$ , by the Cauchy-Schwarz inequality,
\begin{equation}
\begin{aligned}
\langle \mathscr{L}_{h,\ell} u , u \rangle & = \langle (a^w + hb_\ell(x,0) ) u , u \rangle + h \langle (a^{\frac12}r)^w u , u \rangle \\
& \geq \langle (a^w + hb_\ell(x,0) ) u , u \rangle + h \langle r^w u , (a^{\frac12})^w u \rangle + \mathcal{O}(h^2) \\
& \geq \langle (a^w + hb_\ell(x,0) ) u , u \rangle - h^{\nu} \| (a^{1/2})^w u \|^2 - \mathcal{O}(h^{2-\nu}) \| u \|^{2} \\& \geq \langle ((1-h^\nu)a^w + hb_\ell(x,0) ) u , u \rangle - \mathcal{O}(h^{2-\nu}) \| u \|^{2}. 
\end{aligned}
\end{equation}
\end{proof}

Let us first consider $\delta \in (0,\frac12)$ and prove the microlocalization in $\xi$. In order not to overload the notation, we write $\lambda(h) := \lambda_n(h)$ and $\psi := \psi_{n,\ell}$. Thanks to Lemma \ref{lem:inegop} and the estimate $\lambda_n(h) = \mathcal{O}(h^{\frac32})$ we infer
\begin{equation}
\label{eq:binfsymbol}
\exists c >0, ~ \forall u \in L^2(\R)\setminus\{ 0 \}, \quad \langle \bigl(\mathscr{L}_{h,\ell} -\lambda(h) + \rho_\delta^w \bigr) u , u \rangle \geq c h^{2\delta} \,  
. \end{equation}
We deduce that for $h$ small enough $\mathscr{L}_{h,\ell} -\lambda(h) + \rho_\delta^w$ is injective thus invertible (since it is Fredholm) and the Cauchy-Schwarz inequality yields for another $c>0$ \[ \| (\mathscr{L}_{h,\ell} -\lambda(h) + \rho_\delta^w)^{-1} \| \leq c h^{-2\delta}\, .\]
Let us follow the usual iterative procedure. We consider a sequence $(\rho_{n})_{n \in \N} \in \mathscr{C}_0^{\infty}(\R)^{\N}$ of nonnegative cut-offs decreasing on $\R_+$ and increasing on $\R_-$ such that $\rho_1 \succ \rho_2 \cdots \succ \rho_n$ in the sense 
\begin{equation}
\label{eq:defrhoj}
 \rho_j \succ \rho_{j+1} \Leftrightarrow
\mathrm{supp} ~ \rho_{j+1} \subset \{ \rho_j = 1 \} \hbox{ and } \rho_{j+1} = 1 \hbox{ near } 0, 
\end{equation}
in particular $(1-\rho_j)\rho_{j+1} = 0$. We denote $\rho_{j,\delta}(\xi) = \rho_j(h^{-\delta} \xi)$ and we infer by the pseudodifferential calculus (see, for instance, \cite[Theorems 4.17, 4.18 \& 4.24]{Zworski})
\begin{equation}
\begin{aligned}
0 & = (1-\rho_{0,\delta}^w)(\mathscr{L}_{h,\ell} - \lambda) \psi =  (\mathscr{L}_{h,\ell} - \lambda) (1-\rho_{0,\delta}^w) \psi + [\mathscr{L}_{h,\ell}, \rho_{0,\delta}^w]\psi \\ & =  (\mathscr{L}_{h,\ell} - \lambda+ \rho_{1,\delta}^w) (1-\rho_{0,\delta}^w) \psi + \mathcal{O}(h^{2-\delta})(1-\rho_{1,\delta}^w) \psi + \mathcal{O}(h^\infty).
\end{aligned}
\end{equation}
In the previous equation, the term $\mathcal{O}(h^{2-d})$ denotes an operator whose norm is bounded by $C h^{2-d}$ for some $C>0$. Since $\| (\mathscr{L}_{h,\ell} - \lambda + \rho_\delta^w)^{-1} \| \geq ch^{-2\delta}$ and $2-3\delta >0$, we get by induction, for all $j \in \N$,
\begin{equation}
\label{eq:locanew1}
\begin{split}
(1-\rho_{0,\delta}^w) \psi = \mathcal{O}(h^{2-3\delta}) (1-\rho_{1,\delta}^w)\psi + \mathcal{O}(h^\infty) = \mathcal{O}(h^{n(2-3\delta)}) (1-\rho_{1,\delta}^w)\psi + \mathcal{O}(h^\infty) \\ = \mathcal{O}(h^\infty).
\end{split}
\end{equation}
Note that it is also possible to extend the proof in order to obtain \eqref{eq:locanew1} for $\delta \in (0,\frac23)$. Letting $\delta \in (\frac12,\frac23)$, we write $\delta = \frac12 + \delta'$ and recall that a priori the pseudodifferential calculus works properly in $S_{\nu}(\R)$ for $0 \leq \nu < \frac12$.  However changing the semiclassical parameter by writing
$\rho_{\delta}(hD) = \mathrm{Op}_{h^\frac12}(\rho_{\delta'}(\xi))$
solves the issue. Since $\delta'< 1/4$ we still have \eqref{eq:binfsymbol} and 
by the pseudodifferential calculus (with semiclassical parameter $h^{1/2}$) we infer $\| [\mathscr{L}_{h,\ell}, \rho_{n,\delta}^w] \| = \mathcal{O}(h^{3/2-\delta'})$ and the end of the proof is similar.

Let us now consider the localization in $x$ and consider $\delta_x \in (0,1/6)$. We denote $\rho_{j,\delta_x}(x-x_\ell) = \rho_j(h^{-\delta_x}(x-x_\ell))$, $\rho_j$ being defined above \eqref{eq:defrhoj}. Thanks to Lemma \ref{lem:inegop} and the estimate $\lambda(h) = \mathcal{O}(h^{\frac32})$ we infer
\begin{equation}
\label{eq:resolvineg2}
(\mathscr{L}_{h,\ell} -\lambda+h \rho_{\delta_x}(x-x_\ell)) \geq c h^{1+2\delta},  
\end{equation}
and similarly as before the previous operator is invertible an satisfies the bound
$\| (\mathscr{L}_{h,\ell} -\lambda+h \rho_{\delta_x}(x-x_\ell))^{-1}\| \leq ch^{-(1+2\delta)}$ for some other $c>0$. We then have 
\begin{equation}
\label{eq:bootstrap2}
\begin{aligned}
0 & = (1-\rho_{0,\delta_x}(x-x_\ell))(\mathscr{L}_{h,\ell} - \lambda) \psi  \\ & =  (\mathscr{L}_{h,\ell} - \lambda- h\rho_{1,\delta_x}(x-x_\ell)) (1-\rho_{0,\delta_x}(x-x_\ell)) \psi + [\mathscr{L}_{h,\ell}, \rho_{0,\delta_x}(x-x_\ell)]\psi.
\end{aligned}
\end{equation}
The estimate of the bracket is a little less straighforward as we need to use the localization in $\xi$ to conclude. We consider $\delta_\xi \in (0,\tfrac{1}{2})$ then by the pseudodifferential calculus
\begin{equation}
\begin{aligned}
& [\mathscr{L}_{h,\ell}, \rho_{0,\delta}(x-x_\ell)] \psi = [a^w,\rho_{0,\delta}(x-x_\ell)]\psi + \mathcal{O}(h^{2-2\delta}) (1-\rho_{1,\delta_x}(x-x_\ell))\psi + \mathcal{O}(h^{\infty})\psi \\
& = [a^w,\rho_{0,\delta}(x-x_\ell)] \rho_{\delta_\xi}^w \psi + \mathcal{O}(h^{2-2\delta}) (1-\rho_{1,\delta_x}(x-x_\ell))\psi + \mathcal{O}(h^{\infty})\psi\\
& = -ih^{1-\delta_x} (a'(\xi)\rho_{0,\delta_x}'(x-x_\ell))^w \rho_{\delta_\xi}^w \psi + \mathcal{O}(h^{2-2\delta}) (1-\rho_{1,\delta_x}(x-x_\ell))\psi + \mathcal{O}(h^{\infty})\psi \\
& = -ih^{1-\delta_x} (a'(\xi)\rho_{0,\delta_x}'(x-x_\ell)\rho_{\delta_\xi}(\xi))^w \psi + \mathcal{O}(h^{2-2\delta}) (1-\rho_{1,\delta_x}(x-x_\ell))\psi + \mathcal{O}(h^{\infty})\psi\\
& = \mathcal{O}(h^{1+\delta_\xi - \delta_x})(1-\rho_{1,\delta_x}(x-x_\ell)) \psi + \mathcal{O}(h^{2-2\delta}) (1-\rho_{1,\delta_x}(x-x_\ell))\psi + \mathcal{O}(h^{\infty})\psi.
\end{aligned}
\end{equation}
Thanks to \eqref{eq:resolvineg2}, we infer from \eqref{eq:bootstrap2}
\[ (1-\rho_{0,\delta_x}(x-x_\ell)) \psi = \mathcal{O}(h^{\delta_\xi - 3 \delta_x}) (1-\rho_{1,\delta_x}(x-x_\ell)) \psi + \mathcal{O}(h^{\infty})\psi. \]
Taking $3\delta_x < \delta_\xi < \tfrac{1}{2}$ (which is possible since $\delta_x \in (0,\tfrac{1}{6})$) and proceeding similarly to \eqref{eq:locanew1} we conclude $(1-\rho_{0,\delta_x}(x-x_\ell)) \psi = \mathcal{O}(h^{\infty})$.
\end{proof}
\subsubsection{Proof of Proposition \ref{prop.simplewell}}
We only have to prove the lower bound. For that purpose, let us consider an orthonormal family of eigenfunctions $(\psi_j)_{1\leq j\leq n}$ associated with the eigenvalues 
$(\lambda_j(\mathscr{L}_{h,\ell}))_{1\leq j\leq n}$ and set
\[E=\underset{1\leq j\leq n}{\mathrm{span}}\,\psi_j\,.\]
We have, for all $\psi\in E$,
\[\langle\mathscr{L}_{h,\ell}\psi,\psi\rangle\leq \lambda_n(\mathscr{L}_{h,\ell})\|\psi\|^2\,.\]
Let us recall that by Lemma \ref{lem:inegop}, taking $\nu = \tfrac{1}{4}$,
\[ \mathscr{L}_{h,\ell} \geq \bigl((1-h^{1/4})a(\xi) + h b_\ell(x,0) - \mathcal{O}(h^{3/2+1/4}) \bigr)^w.\]
The localization Lemma \ref{lem.locamodes} yields that for $\delta = 1/4$, using $a(\xi) \rho_\delta(\xi) \geq \frac{a''(0)}{2}\xi^2(1-Ch^{\delta})\rho_\delta(\xi)$ for some $C>0$ (obtained by Taylor), letting $\psi \in E$,
\begin{equation}
\begin{aligned}
& \langle\mathscr{L}_{h,\ell}\psi,\psi\rangle  \geq \langle \Bigl(\bigl((1-h^{1/4})a(\xi)\rho_\delta(\xi) + h b_\ell(x,0)\bigr)^w  - \mathcal{O}(h^{3/2+1/4}) \Bigr) \psi, \psi \rangle \\ & \geq \langle \Bigl(\bigl((1-h^{1/4})\tfrac{a''(0)}{2}\xi^2(1-Ch^{\delta})\rho_\delta(\xi) + h b_\ell(x,0)  \bigr)^w - \mathcal{O}(h^{3/2+1/4}) \Bigr) \psi ,\psi \rangle \\ & \geq \langle \bigl((1-\widetilde{C} h^{1/4})\tfrac{a''(0)}{2}\xi^2 + h b_\ell(x,0) \bigr)^w \rho_\delta \psi , \rho_\delta \psi \rangle - \mathcal{O}(h^{3/2+1/4})\| \psi \|^2, ~~ \widetilde{C} >0.
\end{aligned}
\end{equation}
In other words, we have for some other $C>0$,
\[\forall \psi \in E, \quad	\langle\mathscr{L}_{h,\ell}\psi,\psi\rangle\geq Q^{\mathrm{elec}}_h(\rho_\delta^w\psi)-Ch^{3/2+1/4}\|\psi\|^2\,,\]
where $\delta = 1/4$ and
\[\forall \varphi \in H^2(\R), \quad Q^{\mathrm{elec}}_h(\varphi)=\frac{a''(0)}{2}(1-\widetilde{C}h^{\delta})\|hD_x\varphi\|^2+h\int_{\R}b_\ell(x,0)|\varphi|^2\mathrm{d}x\,.\]
Hence, for all $\psi\in E$,
\[Q^{\mathrm{elec}}_h(\rho_\delta^w\psi)\leq (\lambda_n(\mathscr{L}_{h,\ell})+Ch^{3/2+1/4})\|\rho_\delta^w\psi\|^2\,.\]
Due to Lemma \ref{lem.locamodes}, we have $\dim(\rho_\delta^w E)=n$ and the min-max theorem implies that
\[\lambda_n(Q_h^{\mathrm{elec}})\leq \lambda_n(\mathscr{L}_{h,\ell})+Ch^{3/2+1/4}\,.\]
Using the harmonic approximation of the eigenvalues of the electric Schrödinger operator \[ (2n-1)c_0 h^{3/2} + o(h^{3/2}) \leq \lambda_n(Q_h^{\mathrm{elec}}) \leq \lambda_n(\mathscr{L}_{h,\ell}) + o(h^{3/2})\,,\] 
the lower bound follows.

\subsection{Proof of Proposition \ref{prop.doublewell}}

\subsubsection{Localization}
Let us state a localization result for the eigenfunctions of the double well operator, which follows from the same arguments as in the proof of Lemma \ref{lem.locamodes}.
\begin{lemma}[Localization of $\mathscr{L}_h$]
\label{lem:locamodes2}
Let $M>0$ and $\rho \in \mathscr{C}_0^{\infty}(\R)$ such that $0 \leq \rho \leq 1$ and $\rho$ is equal to $1$ in a small neighbourhood of $0$. Letting $\delta \in (0,\frac16)$ we set, for all $x \in \R$, 
\[\underline{\rho_\delta}(x) = \rho(h^{-\delta}(x-x_\ell)) + \rho(h^{-\delta}(x-x_r))\,.\]  
For all $n\geq 1$, we have denoting $\psi_{h,n}$ a $n$-th normalized eigenfunction of $\mathscr{L}$,
\begin{equation}
\underline{\rho_\delta}\, \psi_{h,n} = \psi_{h,n} + \mathcal{O}(h^\infty)\,.
\end{equation}
\end{lemma}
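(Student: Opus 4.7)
The strategy parallels the $x$-localization argument in the proof of Lemma \ref{lem.locamodes}, but with the scalar cutoff $\widetilde{\rho}_\delta(x-x_\ell)$ replaced by a sum of cutoffs centred at both wells. I first fix an auxiliary function $\widetilde{\rho} \in \mathscr{C}_0^{\infty}(\R)$ with $0 \leq \widetilde{\rho} \leq 1$, equal to $1$ in a neighbourhood of $0$, and supported in the set where $\rho = 1$, and set
\[\underline{\widetilde{\rho}_\delta}(x) = \widetilde{\rho}(h^{-\delta}(x-x_\ell)) + \widetilde{\rho}(h^{-\delta}(x-x_r)),\]
so that $(1 - \underline{\rho_\delta})\,\underline{\widetilde{\rho}_\delta} = 0$. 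I also record the a priori estimate $\lambda_n(\mathscr{L}_h) = \mathcal{O}(h^{3/2})$ for fixed $n$, which follows from the min-max principle applied to the WKB quasimodes built in Proposition \ref{prop.quasimodes} at each well (and their mirror images through $U$).

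The key symbolic lower bound to establish is
\[a(\xi) + h\bigl(b(x,\xi) + \underline{\widetilde{\rho}_\delta}(x)\bigr) - \lambda_n(\mathscr{L}_h) \gtrsim h^{1+2\delta}\]
for $\delta \in (0,\tfrac{1}{8})$ and $h$ small. This follows from Taylor expanding $b(x,\xi) = b(x,0) + \xi R(x,\xi)$ with $R \in S(\R^2)$, and applying Young's inequality to absorb $h\xi R$ into $a(\xi) \geq \tfrac{a''(0)}{2}\xi^2(1+o(1))$; the question reduces to the scalar estimate $b(x,0) + \underline{\widetilde{\rho}_\delta}(x) \gtrsim h^{2\delta}$. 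The latter is verified by a case analysis: within an $h^\delta$-neighbourhood of either well $\underline{\widetilde{\rho}_\delta}$ is of order $1$; just outside the supports of $\widetilde{\rho}_\delta$ centred at each well, the non-degeneracy of the two minima gives $b(x,0) \gtrsim h^{2\delta}$; and far away from both wells Assumption \ref{hyp:c} together with $b_\infty > 0$ yields $b(x,0) \gtrsim 1$.

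With this inequality in hand, the Fefferman-Phong inequality in $S_\delta(\R^2)$ implies that $\mathscr{L}_h + h\,\underline{\widetilde{\rho}_\delta}^{w} - \lambda_n(\mathscr{L}_h)$ is bounded below by $ch^{1+2\delta}$ as a quadratic form, hence is invertible; its inverse is a pseudo-differential operator $q_h^w$ with symbol in $h^{-1-2\delta}S_{3\delta}(\R^2)$ and norm $\mathcal{O}(h^{-1-2\delta})$. Writing
\[q_h^w\bigl(\mathscr{L}_h - \lambda_n(\mathscr{L}_h)\bigr) = 1 - h\, q_h^w\, \underline{\widetilde{\rho}_\delta},\]
composing on the left by $(1 - \underline{\rho_\delta})$, and using that products of pseudo-differential operators with disjoint supports are $\mathcal{O}(h^\infty)$ in operator norm, I get
\[(1-\underline{\rho_\delta})\, q_h^w\bigl(\mathscr{L}_h - \lambda_n(\mathscr{L}_h)\bigr) = 1 - \underline{\rho_\delta} + \mathcal{O}(h^\infty).\]
Evaluating both sides on the normalized eigenfunction $\psi_{h,n}$ yields $(1-\underline{\rho_\delta})\psi_{h,n} = \mathcal{O}(h^\infty)$, which is exactly the claim.

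The main obstacle is the scalar inequality $b(x,0) + \underline{\widetilde{\rho}_\delta}(x) \gtrsim h^{2\delta}$: one must verify it uniformly, not only within each well individually but also in the transition annuli where $\widetilde{\rho}_\delta$ ramps down, without losing a power of $h$ that would push the lower bound below the spectral threshold $\lambda_n(\mathscr{L}_h) = \mathcal{O}(h^{3/2})$. The restriction $\delta < \tfrac{1}{8}$ is tuned so that the $\mathcal{O}(h^{3-4\delta})$ Fefferman-Phong remainder in the class $S_{\delta}$ remains strictly smaller than the leading lower bound $h^{1+2\delta}$.
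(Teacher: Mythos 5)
Your proof is correct and follows exactly the route the paper intends: the paper proves this lemma by simply invoking ``the same arguments as in the proof of Lemma~\ref{lem.locamodes}'', and your write-up is precisely that adaptation, with the only genuinely new ingredient being the two-well scalar lower bound $b(x,0)+\underline{\widetilde{\rho}_\delta}(x)\gtrsim h^{2\delta}$ and the a priori bound $\lambda_n(\mathscr{L}_h)=\mathcal{O}(h^{3/2})$ from the WKB quasimodes, both of which you handle correctly. No gaps.
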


Recalling the notation introduced at beginning of Section \ref{sec.2}, we consider the tensored operator
\[\mathscr{L}^{\mathrm{mod}}_h=\mathscr{L}_{h,\ell}\oplus\mathscr{L}_{h,r}\, \hbox{ acting on } L^2(\R) \oplus L^2(\R).\]	
Its low-lying spectrum is made of eigenvalues of multiplicity two:
\begin{equation}\label{eq.lambdamod}
\forall n \in \N_{\geq 1}\,,\quad\lambda_{2n-1}(\mathscr{L}^\mathrm{mod}_{h})=\lambda_{2n}(\mathscr{L}^\mathrm{mod}_{h}) = \lambda_{n}(\mathscr{L}_{h,\ell}).
\end{equation}

\subsubsection{End of the Proof of Proposition \ref{prop.doublewell}}
Let us consider the spaces
\[F_{h,n}=\mathrm{span} (\psi_{n,\ell},\psi_{n,r})\,,\]
which is of dimension two thanks to Lemma \ref{lem:locamodes2}.
Then, for all $\psi_h\in F_{h,n}$, we have
\[(\mathscr{L}_h-\lambda_{n}(\mathscr{L}_{h,\ell}))\psi_h=\mathcal{O}(h^\infty)\|\psi_h\|\,,\]
where we used the $\mathcal{O}(h^\infty)$-orthogonality of $(\psi_{n,\ell},\psi_{n,r})$.
From the spectral theorem, we deduce that there are at least two eigenvalues of $\mathscr{L}_h$ that are $\mathcal{O}(h^\infty)$-close to $\lambda_n(\mathscr{L}_{h,\ell})$. Thus,
\begin{equation}\label{eq.ublambdanLh}
	\forall n \in \N_{\geq 1}\,,\quad  \lambda_{n}(\mathscr{L}_h) \leq \lambda_n(\mathscr{L}^\mathrm{mod}_{h}) + \mathcal{O}(h^\infty)\,.
\end{equation}
For all $\psi \in L^2(\R)$, we let 
\[Q(\psi) := \langle \mathscr{L}_h \psi, \psi \rangle\,,\quad  Q_{\star}(\psi) := \langle \mathscr{L}_{h,\star} \psi, \psi \rangle \hbox{ for } \star = \ell,r\,.\]
For all $(\psi,\widetilde{\psi}) \in L^2(\R) \oplus L^2(\R)$, we let 
\[Q_{\oplus}(\psi,\widetilde{\psi}) := Q_{\ell}(\psi) + Q_r(\widetilde{\psi})\,.\]

Let us then consider an orthonormal family of eigenfunctions $(\psi_j)_{1\leq j\leq n}$ associated with the eigenvalues 
$(\lambda_j(\mathscr{L}_{h}))_{1\leq j\leq n}$ and set
	\[E=\underset{1\leq j\leq n}{\mathrm{span}}\,\psi_j\,.\]
We have, for all $\psi\in E$,
\begin{equation}\label{eq.ubQE}
Q(\psi) \leq \lambda_n(\mathscr{L}_h)\|\psi\|^2\,.
\end{equation}
We consider $\chi_\ell \in \mathscr{C}_0^{\infty}(\R)$ supported outside $(x_r - \eta,x_r+\eta)$ and satisfying $\chi_\ell = 1$ near $x_\ell$. This allows to define by symmetry $\chi_r = \chi_\ell(-\cdot)$ and $\chi = \chi_\ell + \chi_r$.
Thanks to Lemma \ref{lem:locamodes2}, we have, for all $\psi \in E$,
\[Q(\psi) \geq Q(\chi \psi) + \mathcal{O}(h^\infty)\|\psi\|^2\,,\]
and also $\chi_\ell\mathscr{L}_h\chi_r\psi=\mathcal{O}(h^\infty)\|\psi\|$. We infer that
\[Q(\psi) \geq Q(\chi \psi)+ \mathcal{O}(h^\infty)\|\psi\|^2\geq Q_{\oplus}(\chi_\ell \psi, \chi_r \psi)+ \mathcal{O}(h^\infty)\|\psi\|^2\,.\]
Combining this last estimate with \eqref{eq.ubQE}, we get
\[\lambda_n(\mathscr{L}_h) \geq \max_{\psi\in E\setminus \{0 \}}\frac{Q_{\oplus}(\chi_\ell\psi,\chi_r \psi)}{\| \psi \|^2} + \mathcal{O}(h^\infty)\,,\]
so that, again by Lemma \ref{lem:locamodes2}, we have
\[\lambda_n(\mathscr{L}_h) \geq  \max_{\psi\in E\setminus \{0 \}}\frac{Q_{\oplus}(\chi_\ell\psi,\chi_r \psi)}{(1+ \mathcal{O}(h^\infty)) (\| \chi_\ell \psi \|^2 + \| \chi_r \psi \|^2)} + \mathcal{O}(h^\infty)\,.\]
By noticing that $E\ni\psi\mapsto (\chi_\ell\psi,\chi_r\psi)$ is injective, we deduce that
\[\lambda_n(\mathscr{L}_h) \geq (1+\mathcal{O}(h^\infty))\min_{\substack{F \subset L^2(\R)\oplus L^2(\R) \\ \dim F=n}} \max_{(\psi,\widetilde{\psi}) \in F\setminus \{0\}}\frac{Q_{\oplus}(\psi,\widetilde{\psi})}{\| \psi \|^2 + \| \widetilde{\psi} \|^2 } + \mathcal{O}(h^\infty)\,.\]
By the min-max theorem, it follows that
\[\lambda_n(\mathscr{L}_h) \geq (1+ \mathcal{O}(h^\infty)) \lambda_{n}(\mathscr{L}_{h,\mathrm{mod}})+\mathcal{O}(h^\infty)\,.\]
Recalling \eqref{eq.ublambdanLh}, \eqref{eq.lambdamod}, and Proposition \ref{prop.simplewell}, this ends the proof.

\section{Exponential estimates of the eigenfunctions}
\label{section:exp}

Exponential decay estimates in the context of Schrödinger operators usually follow from the famous Agmon estimates (\cite{Agmon82, HS84}). These estimates can be interpreted as elliptic estimates for a conjugated operator (by a suitable exponential).

Let $\varphi \in \mathscr{C}^{\infty}(\R)$ such that $\varphi' = \varphi_1 + h^{\frac14} \varphi_2$ where $\varphi_1 \in S(\R)$, $\varphi_2 \in S_{\frac14}(\R)$.  
We will work  with the conjugated operator:
\begin{equation}\label{eq.Lphi}
	\mathscr{L}_{h,\ell}^\varphi=e^{\frac{\varphi}{\sqrt{h}}}\mathscr{L}_{h,\ell}e^{-\frac{\varphi}{\sqrt{h}}}\,.
\end{equation}

\subsection{A functional inequality}
\begin{proposition}\label{prop.Agmonfunctional}
	Let $C_0,R>0$ and $\varphi \in \mathscr{C}^{\infty}(\R)$ such that $\varphi' = \varphi_1 + h^{\frac14} \varphi_2$ where $\varphi_1 \in S(\R)$, $\varphi_2 \in S_{\frac14}(\R)$ satisfy $\varphi_1(x_\ell)=\varphi_2(x_\ell) = 0$ and
	\begin{equation}
		\label{eq:cond}
		\forall x \in \complement B(x_\ell,Rh^{1/4}), ~~ b_\ell(x,0)-\frac{a''(0)}{2}\varphi'(x)^2 \geq C_0 h^{\frac12}\,.
	\end{equation}
Then there exists $C > 0$ such that
	\begin{equation} 
		\label{eq:cond2}
		\forall x \in B(x_\ell,Rh^{\frac14})\,\quad |\varphi(x)| \leq Ch^{\frac12}\, , 
	\end{equation}
and for all $M < C_0$, there exist $c, h_0>0$ such that, for all $h\in(0,h_0)$, $\lambda< Mh^{\frac32}$ and $v\in L^2(\R)$,
	\begin{equation}
		h^{\frac32}\|v\|^2_{L^2(\complement B(x_\ell,Rh^{\frac14}))}\leq c\|(\mathscr{L}^{\varphi}_{h,\ell}-\lambda)v\|\|v\|+ch^{\frac32}\| v\|^2_{L^2(B(x_\ell,Rh^{\frac14}))}\,.
	\end{equation}
\end{proposition}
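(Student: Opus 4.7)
The plan is to derive the inequality from a Fefferman-Phong lower bound applied to the symmetric part of $\mathscr{L}_{h,\ell}^\varphi$. Since $a$ and $b_\ell(x,\cdot)$ extend holomorphically to $\Sigma_r$, an analytic conjugation computation (analogous to the one giving Lemma \ref{lem:apxl} for $\mathscr{L}_{h,\ell}^{\Phi_\ell}$) shows that $\mathscr{L}_{h,\ell}^\varphi$ is a pseudo-differential operator whose Weyl symbol reads
\[
p_h^\varphi(x,\xi)=a(\xi+i\sqrt{h}\,\varphi'(x))+h\,b_\ell(x,\xi+i\sqrt{h}\,\varphi'(x))+r_h(x,\xi),
\]
with a subprincipal remainder $r_h$ (coming from compositions involving $\varphi''$) controlled in a hybrid class $S_{\delta_1,\delta_2}$ adapted to the scale $\varphi'=\varphi_1+h^{1/4}\varphi_2$. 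Taking the real part and Taylor-expanding in the imaginary direction gives
\[
\mathrm{Re}\,p_h^\varphi=a(\xi)+h\,b_\ell(x,\xi)-\tfrac{h}{2}a''(\xi)\varphi'(x)^2+\mathcal{O}(h^2\varphi'(x)^4)+\text{l.o.t.}
\]

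The next step is a pointwise lower bound on $\mathrm{Re}\,p_h^\varphi$ outside $B(x_\ell,Rh^{1/4})$. I would split the phase space in $\xi$. When $|\xi|\geq h^{1/2-\varepsilon}$ for a small $\varepsilon>0$, Assumption \ref{hyp:a} gives $a(\xi)\gtrsim\xi^2\gg h^{3/2}$, which dominates all remaining terms. When $|\xi|\leq h^{1/2-\varepsilon}$, Taylor-expand $a(\xi)=\tfrac{a''(0)}{2}\xi^2+O(\xi^4)$ and $b_\ell(x,\xi)=b_\ell(x,0)+\xi\,\partial_\xi b_\ell(x,0)+O(\xi^2)$; Young's inequality $|h\xi\,\partial_\xi b_\ell|\leq\tfrac{a''(0)}{4}\xi^2+O(h^2)$ absorbs the linear-in-$\xi$ cross term. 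Combining with \eqref{eq:cond} this yields, on $\complement B(x_\ell,Rh^{1/4})$,
\[
\mathrm{Re}\,p_h^\varphi\geq \tfrac{a''(0)}{4}\xi^2+h\bigl(b_\ell(x,0)-\tfrac{a''(0)}{2}\varphi'(x)^2\bigr)+o(h^{3/2})\geq C_0h^{3/2}+o(h^{3/2}).
\]

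To globalize, I would introduce a cutoff $\chi_h(x)=\chi\bigl(R^{-1}h^{-1/4}(x-x_\ell)\bigr)$ equal to $1$ on $B(x_\ell,Rh^{1/4})$. Using $\varphi_1(x_\ell)=\varphi_2(x_\ell)=0$ together with the $S+h^{1/4}S_{1/4}$ structure, one has $|\varphi'(x)|\lesssim h^{1/4}$ on the support of $\chi_h$, so $|\tfrac{h}{2}a''\varphi'^2|\lesssim h^{3/2}$ there, and \eqref{eq:cond2} controls the remaining contributions. Hence for a large enough constant $K>0$,
\[
\mathrm{Re}\,p_h^\varphi+Kh^{3/2}\chi_h(x)\geq \tfrac{C_0}{2}h^{3/2}\quad\text{on }\R^2.
\]
The Fefferman-Phong inequality of Appendix \ref{apx.1}, applied in the class $S_{\delta_1,\delta_2}$ to $p_h^\varphi-\tfrac{C_0}{2}h^{3/2}+Kh^{3/2}\chi_h$, upgrades this pointwise bound into the quadratic-form estimate
\[
\mathrm{Re}\langle\mathscr{L}_{h,\ell}^\varphi v,v\rangle+Kh^{3/2}\|v\|_{L^2(B(x_\ell,2Rh^{1/4}))}^2\geq \tfrac{C_0}{3}h^{3/2}\|v\|^2.
\]
Combined with the trivial bound $\mathrm{Re}\langle(\mathscr{L}_{h,\ell}^\varphi-\lambda)v,v\rangle\leq\|(\mathscr{L}_{h,\ell}^\varphi-\lambda)v\|\|v\|$ and the hypothesis $\lambda<Mh^{3/2}$ with $M<C_0$, this yields the announced inequality.

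The main obstacle is the symbol-class bookkeeping. Because $\varphi'\in S+h^{1/4}S_{1/4}$, both $p_h^\varphi$ and the remainder $r_h$ live in a hybrid class $S_{\delta_1,\delta_2}$ whose indices must be chosen small enough that Fefferman-Phong still delivers the full $h^{3/2}$ gain, yet large enough to accommodate the $h^{1/4}$-scaled cutoff $\chi_h$ and the imaginary shift by $i\sqrt{h}\varphi'$. Rigorously verifying the hypotheses of Fefferman-Phong in that class, and controlling the subprincipal terms produced by $\varphi''$ in the Weyl composition, is the delicate technical step.
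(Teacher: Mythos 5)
Your overall architecture (Kuranishi conjugation, taking the real part, Taylor expansion, pointwise positivity plus Fefferman--Phong, then Cauchy--Schwarz) is the same as the paper's, but the way you invoke Fefferman--Phong contains a genuine quantitative gap. You propose to add a cutoff $Kh^{3/2}\chi_h$ living at the spatial scale $h^{1/4}$ and to apply Fefferman--Phong to the \emph{full} symbol $\mathrm{Re}\,p_h^\varphi-\tfrac{C_0}{2}h^{3/2}+Kh^{3/2}\chi_h$. That symbol is only in $S_{1/4,0}(\R^2)$ (the scale of the cutoff is forced to be $h^{1/4}$ by the balls appearing in the statement, and $\varphi'=\varphi_1+h^{1/4}\varphi_2$ already lives at that scale), so Theorem~\ref{th:fph} returns a lower bound $-Ch^{2-2(1/4+0)}=-Ch^{3/2}$: the error is of \emph{exactly} the same order as the gain $\tfrac{C_0}{2}h^{3/2}$ you are trying to produce, with a constant $C$ controlled only by symbol seminorms and unrelated to $C_0$. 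This is fatal twice over: first, $C$ may exceed $C_0/2$ and destroy positivity altogether; second, even if the loss merely reduces the constant to some $C_0'<C_0$, the proposition must hold for every $M<C_0$, so $M$ may be taken in $(C_0',C_0)$ and the final subtraction of $\lambda<Mh^{3/2}$ no longer closes (your stated conclusion with constant $C_0/3$ already fails for $M\in(C_0/3,C_0)$). You flag this tension yourself (``small enough that Fefferman--Phong still delivers the full $h^{3/2}$ gain, yet large enough to accommodate the $h^{1/4}$-scaled cutoff''), but at $\delta_1=1/4$, $\delta_2=0$ there is no admissible choice.

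The paper avoids the borderline by never feeding the cutoff or the effective potential into Fefferman--Phong. In Lemma~\ref{lem.coercivityelec}, the terms $a(\xi)$ and $h\bigl(b_\ell(x,0)-\tfrac{a''(0)}{2}\varphi'(x)^2\bigr)$ are kept as exact operators (a non-negative Fourier multiplier and a multiplication operator, respectively, which carry no quantization error), and Fefferman--Phong is applied only to the cross terms $hR_h+h^2r_h$, which vanish at $\xi=0$ and can therefore be dominated, via $|R_h|\leq C\sqrt{a(\xi)}$ and Young's inequality, by $\tfrac{C}{2}\bigl(h^{\epsilon}a(\xi)+h^{2-\epsilon}\bigr)$; factoring $h^{\epsilon}$ out of the resulting non-negative symbol of $S_{1/4,0}(\R^2)$ yields an error $\mathcal{O}(h^{3/2+\epsilon})=o(h^{3/2})$, which survives the subtraction of any $\lambda<Mh^{3/2}$ with $M<C_0$. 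The localization onto $B(x_\ell,Rh^{1/4})$ versus its complement is then performed directly on the integral $\int\bigl(b_\ell(x,0)-\tfrac{a''(0)}{2}\varphi'^2-\cdots\bigr)|v|^2\,\mathrm{d}x$ using \eqref{eq:cond} and \eqref{eq:cond2}, at no quantization cost. If you restructure your argument along these lines, the remaining steps of your proposal go through.
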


\subsubsection{An elliptic estimate}
The proof of Proposition \ref{prop.Agmonfunctional} is mainly a consequence of the following coercivity estimate, which is based on quadratic form manipulations like in the Schrödinger case.

\begin{lemma}\label{lem.coercivityelec}
There exist $h_0,\epsilon,C>0$ such that, for all $h\in(0,h_0)$, all $v\in L^2(\R)$ and all $\lambda\in \R$, we have 
\begin{equation}
\label{eq:coercivityelec}
	\Re\langle (\mathscr{L}^{\varphi}_{h,\ell}-\lambda) v, v\rangle\geq \langle (h b_\ell(x,0)-h\frac{a''(0)}{2}\varphi'(x)^2-\lambda) v, v\rangle + Ch^{\frac32+\epsilon} \lVert v \rVert_{L^2(\R)}^2\,.
\end{equation}
	
\end{lemma}
\begin{proof}
Thanks to Lemma \ref{lem:kur1}, we have
	\begin{equation}
	\label{eq:dvtphi}
	\mathscr{L}^{\varphi}_{h,\ell}=p_\varphi^w+\mathcal{O}(h^{3/2+ 1/4})\,,\quad p_\varphi(x,\xi)=a(\xi+ih^{\frac12}\varphi')+hb_\ell(x,\xi+ih^{\frac12}\varphi')\,.
\end{equation}
Since $\varphi'$ is bounded and by the Taylor formula, we find $r_h$ belonging to $S_{1/4}(\R^2)$ such that
\begin{equation}
\mathrm{Re}(p_\varphi)=a(\xi)+hb_\ell(x,\xi)-h\varphi'(x)^2 \frac{a''(\xi)}{2}+h^{2}r_h(x,\xi)\,.
\end{equation}
At this point, it is possible to conclude by Taylor expanding the symbol and using the Fefferman-Phong inequality (Theorem \ref{th:fph}). However, the following analysis proves that the use of Fefferman-Phong inequality can be replaced here by that of the Cauchy-Schwarz inequality.

We replace $b_\ell(x,\xi)$ with $b_\ell(x,\xi)-h\varphi'(x)^2 \frac{a''(\xi)}{2}$ in the proof of Lemma \ref{lem:inegop}. In particular 
\[ b_\ell(x,\xi)-h\varphi'(x)^2 \frac{a''(\xi)}{2} = b_\ell(x,0) - \varphi'(x)^2 \frac{a''(0)}{2}+ a(\xi)^{1/2} r(x,\xi), ~ r(x,\xi) \in S_{1/4,0}(\R), \]
where $S_{1/4,0}(\R)$ is defined in \eqref{eq.Sd1d2}. We infer by Cauchy-Schwarz that for $\nu \in (0,\tfrac{1}{4})$, 
\begin{equation}
\begin{aligned}
h\langle (a^{1/2}(\xi) r(x,\xi))^w u, u \rangle & = h\langle  r^w u, (a^{1/2})^w u \rangle - \mathcal{O}(h^{2-1/4})\|u\|^2 \\ & \geq - h^{\nu} \langle a^w u,u \rangle - h^{2-\nu} \| r^w u \|^2 - \mathcal{O}(h^{2-1/4})\|u\|^2 \\
& \geq - h^{\nu} \langle a^w u,u \rangle- \mathcal{O}(h^{2-1/4})\|u\|^2.
\end{aligned}
\end{equation}
We deduce
\begin{equation}
\mathrm{Re}(p_\varphi)^w \geq \bigl((1-h^{\nu}) a(\xi)+ h(b_\ell(x,0) - \varphi'(x)^2 \tfrac{a''(0)}{2})\bigr)^w- \mathcal{O}(h^{2-1/4}).
\end{equation}
Since the operator $a^w$ is nonnegative \eqref{eq:coercivityelec} follows from the previous equation and \eqref{eq:dvtphi}.
	\end{proof}

\subsubsection{Proof of Proposition \ref{prop.Agmonfunctional}}
Applying Lemma \ref{lem.coercivityelec} and using the condition on $\lambda$, we get
\begin{align*}
	h \int_{\R}\left(b_\ell(x,0)-\varphi'(x)^2 \frac{a''(0)}{2}-(M+ch^\epsilon )h^\frac12\right)| v|^2\mathrm{d}x\leq \Re\langle (\mathscr{L}^{\varphi}_{h,\ell}-\lambda)v,v\rangle \, .
\end{align*}
Then using the $\varphi' = \varphi_1 + h^{\frac14} \varphi_2$ where $\varphi_1 \in S(\R)$, $\varphi_2 \in S_{\frac14}(\R)$ satisfy $\varphi_1(x_\ell)=\varphi_2(x_\ell) = 0$, we infer that $|\varphi'|^2 = \mathcal{O}(1) h^\frac12$ on $\{ |x-x_\ell|\leq Rh^{\frac14} \}$ and in particular we deduce \eqref{eq:cond2}. This gives
\begin{multline*}
	h \int_{|x-x_\ell|\geq Rh^{\frac14}}\left(b_\ell(x,0)-\varphi'(x)^2 \frac{a''(0)}{2}-(M+ch^\epsilon )h^\frac12\right)| v|^2\mathrm{d}x\\
	\leq \Re\langle (\mathscr{L}^{\varphi}_{h,\ell}-\lambda) v, v\rangle+ch^{\frac32}\| v\|^2_{L^2(B(x_\ell,Rh^{\frac14}))} \,.
\end{multline*}
Then, \eqref{eq:cond} provides us with
\begin{equation*}
	h^{\frac32}\int_{|x-x_\ell|\geq Rh^{\frac14}}(C_0-(M+ch^\epsilon )))| v|^2\mathrm{d}x
	\leq \Re\langle (\mathscr{L}^{\varphi}_{h,\ell}-\lambda) v,v\rangle  +ch^{\frac32}\| v\|^2_{L^2(B(x_\ell,Rh^{\frac14}))}\,.
\end{equation*}
It remains to use the Cauchy-Schwarz inequality and the conclusion follows.

\subsection{Consequences}\label{sec.conse}
Let us now analyze the exponential decay of the eigenfunctions of $\mathscr{L}_{h,\ell}$. We recall that $\Phi_\ell$ is given by \eqref{eq:phiell}
\begin{equation}
	\Phi_{\ell} : x \in \R \longmapsto \sqrt{\frac{2}{a''(0)}}  \left | \int_{x_\ell}^x \sqrt{b_{\ell}(s,0)}\mathrm{d}s \right |\, .
\end{equation}
The behavior at infinity of $\Phi_\ell$ will not be important in the analysis. That is why we consider a bounded version of it, denoted by $\widetilde{\Phi}_\ell$. We take $A>0$ such that, for all $x\notin [-A,A]$, $\Phi_\ell(x)>\Phi_\ell(x_r)$ and define the following.

\begin{lemma}\label{lem.tildePhiell}
There exists a function $\widetilde{\Phi}_\ell\in S(\R)$ such that:
\begin{enumerate}[---]
\item $\widetilde{\Phi}_\ell=\Phi_\ell$ on $[-A,A]$,
\item $\widetilde{\Phi}_\ell$ is constant on $\complement[-2A,2A]$,
\item  $\widetilde{\Phi}_\ell(\pm2A)<\Phi_\ell(\pm2A)$,
\item for all $x\in\R$, $ (x-x_\ell)\widetilde\Phi_\ell'(x) \geq 0$,
\item $|\widetilde{\Phi}_\ell' | \leq |\Phi_\ell'|$ thus $\widetilde{\Phi}_\ell \leq \Phi_\ell$.
\end{enumerate}
\end{lemma}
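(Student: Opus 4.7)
The plan is to build $\widetilde{\Phi}_\ell$ by truncating the derivative of $\Phi_\ell$. Pick a cutoff $\chi \in \mathscr{C}^\infty_0(\R)$ with $0 \leq \chi \leq 1$, $\chi = 1$ on $[-A,A]$, $\chi = 0$ outside $[-2A,2A]$, and with $\chi < 1$ on some non-empty open subset of $(A, 2A)$ (and symmetrically on $(-2A, -A)$). Then set
\[
\widetilde{\Phi}_\ell(x) := \int_{x_\ell}^x \chi(s)\, \Phi_\ell'(s)\, \mathrm{d}s\,.
\]
Recall from the excerpt that $\Phi_\ell$ is smooth on $\R$ because $s \mapsto \mathrm{sgn}(s-x_\ell)\sqrt{b_\ell(s,0)}$ is smooth, and that $\Phi_\ell'(s) = \sqrt{2/a''(0)}\,\mathrm{sgn}(s-x_\ell)\sqrt{b_\ell(s,0)}$, in particular $(s-x_\ell)\Phi_\ell'(s) \geq 0$ with equality only at $s = x_\ell$.

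Now I would verify the five properties in turn. For (i), if $x \in [-A,A]$ then the integrand equals $\Phi_\ell'$ on the interval of integration (since $x_\ell \in (-A,A)$), so $\widetilde{\Phi}_\ell(x) = \Phi_\ell(x) - \Phi_\ell(x_\ell) = \Phi_\ell(x)$. For (ii), $\widetilde{\Phi}_\ell'(x) = \chi(x)\Phi_\ell'(x) = 0$ for $|x| \geq 2A$, so $\widetilde{\Phi}_\ell$ is constant on each of $(-\infty, -2A]$ and $[2A, +\infty)$. For (iv), $(x-x_\ell)\widetilde{\Phi}_\ell'(x) = \chi(x)\,(x-x_\ell)\Phi_\ell'(x) \geq 0$ because both factors are nonnegative. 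For (v), $|\widetilde{\Phi}_\ell'| = \chi|\Phi_\ell'| \leq |\Phi_\ell'|$; integrating from $x_\ell$ and using $\Phi_\ell(x_\ell) = 0$ yields $\widetilde{\Phi}_\ell \leq \Phi_\ell$. For (iii), one writes
\[
\Phi_\ell(2A) - \widetilde{\Phi}_\ell(2A) = \int_A^{2A}(1-\chi(s))\Phi_\ell'(s)\,\mathrm{d}s > 0\,,
\]
since $\Phi_\ell' > 0$ on $(x_\ell, +\infty) \supset [A, 2A]$ (assuming $A$ is taken large enough that $x_\ell < A$, which is harmless) and $1-\chi$ is positive on a non-empty open subset of $(A,2A)$ by our choice of $\chi$. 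The argument at $-2A$ is symmetric.

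It remains to check the membership $\widetilde{\Phi}_\ell \in S(\R)$. Boundedness follows from the fact that $\widetilde{\Phi}_\ell$ is continuous and constant outside the compact set $[-2A, 2A]$. For the derivatives, note $\widetilde{\Phi}_\ell^{(k)}(x) = (\chi \Phi_\ell')^{(k-1)}(x)$ for $k \geq 1$; since $\chi \Phi_\ell'$ is smooth with compact support, all its derivatives are bounded. I expect no serious obstacle in this argument; the only mild subtlety is that $\Phi_\ell$ itself is \emph{not} in $S(\R)$ (it grows at infinity), which is precisely why this truncation is needed, and one should take $A$ large enough that $x_\ell$ lies strictly inside $(-A,A)$ and that the strict inequality at $\pm 2A$ holds, both of which are easy to arrange.
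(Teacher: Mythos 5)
Your construction is exactly the one in the paper: the authors also set $\widetilde{\Phi}_\ell(x)=\int_{x_\ell}^x\chi_0(s)\Phi_\ell'(s)\,\mathrm{d}s$ for a cutoff $\chi_0$ equal to $1$ on $[-A,A]$ and supported in $[-2A,2A]$, and simply state that the properties are straightforward. Your verification of the five items (including the observation that $1-\chi$ must be positive somewhere on $(A,2A)$, which any such smooth cutoff satisfies automatically) is correct and fills in the details the paper omits.
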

\begin{proof}
We consider $\chi_0 \in \mathscr{C}_0^{\infty}(\R)$ such that $\chi_0 = 1$ on $[-A,A]$, $\chi_0 = 0$ on $\R \setminus [-2A,2A]$ and $0 \leq \chi_0 \leq 1$, and define
\[\forall x \in \R, \quad \widetilde{\Phi}_\ell(x) = \int_{x_\ell}^x \chi_0(s) \Phi_\ell'(s)\mathrm{d}s. \]
It is then straightforward that $\widetilde{\Phi}_\ell$ satisfies the desired requirements.
\end{proof}
Due to Proposition \ref{prop.simplewell}, there exist $h_0,M>0$ such that, for all $h\in(0,h_0)$,
\begin{equation}
\label{m}
\lambda_1(\mathscr{L}_{h,\ell}) \leq Mh^{\frac32}\,.
\end{equation}
In the following, we let $\mu(h)=\lambda_1(\mathscr{L}_{h,\ell})\,.$

\begin{corollary}[Agmon estimates]\label{corollary.agmon}
Let $\epsilon\in(0,1)$ and $\psi_{h}$ a normalized eigenfunction of $\mathscr{L}_{h,\ell}$ associated with the eigenvalue $\mu(h)$. There exist $C,h_0>0$ such that, for all $h\in(0,h_0)$,
\[\|e^{(1-\epsilon)	\widetilde\Phi_\ell/\sqrt{h}}\psi_h\|\leq C\|\psi_h\|\,.\]
\end{corollary}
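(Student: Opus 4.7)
The plan is to apply Proposition \ref{prop.Agmonfunctional} to the weight $\varphi:=(1-\varepsilon)\widetilde\Phi_\ell$ and to the function $v:=e^{\varphi/\sqrt h}\psi_h$, which is genuinely in $L^2(\R)$ because $\widetilde\Phi_\ell\in S(\R)$ is bounded. Since $\psi_h$ is an eigenfunction, we have $(\mathscr{L}^{\varphi}_{h,\ell}-\mu(h))v=e^{\varphi/\sqrt h}(\mathscr{L}_{h,\ell}-\mu(h))\psi_h=0$, so Proposition \ref{prop.Agmonfunctional} will collapse to the bound $\|v\|^2_{L^2(\complement B(x_\ell,Rh^{1/4}))}\leq c\,\|v\|^2_{L^2(B(x_\ell,Rh^{1/4}))}$, from which the corollary follows after observing, via \eqref{eq:cond2}, that $e^{\varphi/\sqrt h}$ is bounded by a constant on $B(x_\ell,Rh^{1/4})$, hence $\|v\|_{L^2(B(x_\ell,Rh^{1/4}))}\leq C\|\psi_h\|$.

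The substantive work is to verify that $\varphi=(1-\varepsilon)\widetilde\Phi_\ell$ meets the hypotheses of Proposition \ref{prop.Agmonfunctional} with a constant $C_0$ strictly larger than the constant $M$ provided by \eqref{m}. I would take $\varphi_1=(1-\varepsilon)\widetilde\Phi_\ell'$ and $\varphi_2=0$; both belong to $S(\R)$ by Lemma \ref{lem.tildePhiell}, and the vanishing $\varphi_1(x_\ell)=\Phi_\ell'(x_\ell)=0$ follows from the explicit formula \eqref{eq:phiell} together with the fact that $b_\ell(\cdot,0)$ has a non-degenerate zero at $x_\ell$. The bound \eqref{eq:cond2} is immediate: Taylor's formula applied to $\widetilde\Phi_\ell=\Phi_\ell$ near $x_\ell$ gives $\widetilde\Phi_\ell(x)=O((x-x_\ell)^2)$, so that $|\varphi(x)|\leq CR^2h^{1/2}$ on $B(x_\ell,Rh^{1/4})$.

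The main obstacle is the eikonal-type inequality \eqref{eq:cond}. Here I would split $\complement B(x_\ell,Rh^{1/4})$ into three regions. On $[-A,A]\setminus B(x_\ell,Rh^{1/4})$, the very definition \eqref{eq:phiell} of $\Phi_\ell$ yields the saturated identity $\tfrac{a''(0)}{2}\widetilde\Phi_\ell'(x)^2=b_\ell(x,0)$, hence
\[
b_\ell(x,0)-\tfrac{a''(0)}{2}\varphi'(x)^2=\varepsilon(2-\varepsilon)\,b_\ell(x,0).
\]
Using the non-degeneracy $b_\ell(x,0)\geq c_0(x-x_\ell)^2$ near $x_\ell$, this is bounded below by $\varepsilon(2-\varepsilon)c_0R^2h^{1/2}$. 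Choosing $R$ so large that $\varepsilon(2-\varepsilon)c_0R^2>M$, we obtain a permissible constant $C_0\in(M,\varepsilon(2-\varepsilon)c_0R^2)$. On the transition strip $[-2A,2A]\setminus[-A,A]$, the bound $|\widetilde\Phi_\ell'|\leq|\Phi_\ell'|$ from Lemma \ref{lem.tildePhiell} gives $b_\ell-\tfrac{a''(0)}{2}\varphi'^2\geq\varepsilon(2-\varepsilon)b_\ell$, and $b_\ell$ is bounded below by a positive constant there; the same holds on $\complement[-2A,2A]$, where $\widetilde\Phi_\ell'\equiv 0$ and $b_\ell\geq\min(b_\infty/2,\min_{[-2A,2A]\setminus[-A,A]}b_\ell)$ for $h$ small. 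In both regimes the lower bound trivially exceeds $C_0h^{1/2}$ for small $h$.

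With \eqref{eq:cond} and \eqref{eq:cond2} verified and $\mu(h)\leq Mh^{3/2}<C_0h^{3/2}$ by \eqref{m}, Proposition \ref{prop.Agmonfunctional} applies to $v$. The identity $(\mathscr{L}^{\varphi}_{h,\ell}-\mu(h))v=0$ reduces its conclusion to
\[
h^{3/2}\|v\|^2_{L^2(\complement B(x_\ell,Rh^{1/4}))}\leq c\,h^{3/2}\|v\|^2_{L^2(B(x_\ell,Rh^{1/4}))},
\]
so $\|v\|^2\leq(1+c)\|v\|^2_{L^2(B(x_\ell,Rh^{1/4}))}$. Since $\varphi\leq\widetilde\Phi_\ell$ and $|\varphi|\leq Ch^{1/2}$ on $B(x_\ell,Rh^{1/4})$ by \eqref{eq:cond2}, this yields $\|e^{(1-\varepsilon)\widetilde\Phi_\ell/\sqrt h}\psi_h\|\leq C\|\psi_h\|_{L^2(B(x_\ell,Rh^{1/4}))}\leq C\|\psi_h\|$, as required.
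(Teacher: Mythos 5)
Your proposal is correct and follows essentially the same route as the paper: take a weight proportional to $\widetilde\Phi_\ell$, check hypotheses \eqref{eq:cond} and \eqref{eq:cond2} of Proposition \ref{prop.Agmonfunctional} using $|\widetilde\Phi_\ell'|\leq|\Phi_\ell'|$, the eikonal identity and the quadratic behaviour of $b_\ell(\cdot,0)$ at $x_\ell$, and apply the proposition to $v=e^{\varphi/\sqrt h}\psi_h$ with vanishing right-hand side. The only (immaterial) difference is that the paper uses the prefactor $\sqrt{1-\varepsilon}$ instead of your $1-\varepsilon$, and your verification of \eqref{eq:cond} is spelled out region by region where the paper is terser.
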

\begin{proof}
We let $\varphi=\sqrt{1-\epsilon}\widetilde\Phi_\ell$. It satisfies the assumptions of Proposition \ref{prop.Agmonfunctional} with $C_0>M$ and $\varphi_2=0$. Indeed, we have
\[b_\ell(x,0)-\frac{a''(0)}{2}\varphi'(x)^2\geq b_\ell(x,0)-\frac{a''(0)}{2}(\Phi'_\ell)^2=\epsilon b_\ell(x,0)\,,\]
and there remains to use the quadratic behavior of $b_\ell(\cdot,0)$ near $x_\ell$ and to choose $R$ large enough to get \eqref{eq:cond}.

Then, we consider the eigenvalue equation
\[\left(\mathscr{L}_{h,\ell}-\mu(h)\right)\psi_h=0\,,\]
which is equivalent to
\[\left(\mathscr{L}^\varphi_{h,\ell}-\mu(h)\right)v=0\,,\quad v=e^{\frac{\varphi}{\sqrt{h}}}\psi_h\,,\]
where we recall the notation \eqref{eq.Lphi}. Applying Proposition \ref{prop.Agmonfunctional} to $v=e^{\frac{\varphi}{\sqrt{h}}}\psi_h$, we find that
\[\|v\|\leq C\|v\|_{L^2(B(x_\ell,Rh^{\frac14}))}\,.\]
Thus, by using that $\varphi/\sqrt{h}$ is bounded on $B(x_\ell,Rh^{\frac14})$, we get some constant $C>0$ such that 
	\[\|e^{\varphi/\sqrt{h}}\psi_h\|\leq C\|\psi_h\|\,.\]
	\end{proof}

Let us now turn to the WKB approximation of $\psi_h$. More precisely, we want to have an exponentially sharp approximation of $\psi_h$ on the interval $ K = [-A, x_r - \eta]$, for $\eta>0$ small enough. This will follow from Proposition \ref{prop.Agmonfunctional}. We consider $\underline{\chi}_\ell \in \mathscr{C}_0^{\infty}(\R)$ such that $\underline{\chi}_\ell \geq 0$, $\underline{\chi}_\ell = 1$ on $[ -2A,2A]$ and $\underline{\chi}_\ell = 0$ on $(-\infty,-3A) \cup (3A,+\infty)$.

We recall that our WKB quasimode is $\Psi_\ell^{\mathrm{WKB}}:= h^{-\frac18}\underline{\chi}_\ell e^{-\frac{\Phi_\ell}{\sqrt{h}}}u_{1,h}$, see Proposition \ref{prop.quasimodes}.

\begin{corollary}[WKB approximation]\label{cor.wkb}
Let $\Pi_{h,\ell}$ the projection on the groundstate of $\mathscr{L}_{h,\ell}$. Then, we have
\[\left \|e^{\frac{\Phi_\ell}{\sqrt{h}}}(\Psi^{\mathrm{WKB}}_{\ell}-\Pi_{h,\ell}\Psi_\ell^{\mathrm{WKB}}) \right \|_{L^2(K)} = \mathcal{O}(h^\infty)\,.\]
\end{corollary}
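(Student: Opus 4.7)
The plan is to upgrade the plain $L^2$ smallness of $R := \Psi_\ell^{\mathrm{WKB}} - \Pi_{h,\ell}\Psi_\ell^{\mathrm{WKB}}$ to a weighted $L^2$ smallness on $K$ by means of Proposition~\ref{prop.Agmonfunctional}. Since $\Pi_{h,\ell}$ projects onto the kernel of $\mathscr{L}_{h,\ell} - \mu(h)$, I would first decompose
\[
(\mathscr{L}_{h,\ell} - \mu(h))R = (\mathscr{L}_{h,\ell} - \lambda_1^{\mathrm{WKB}}(h))\Psi_\ell^{\mathrm{WKB}} + (\lambda_1^{\mathrm{WKB}}(h) - \mu(h))\,\Psi_\ell^{\mathrm{WKB}}.
\]
By Proposition~\ref{prop.quasimodes}, the first summand is $\mathcal{O}(h^\infty)$ in $L^2$, and the spectral theorem combined with the $h^{3/2}$-separation of Proposition~\ref{prop.simplewell} forces $|\lambda_1^{\mathrm{WKB}}(h) - \mu(h)| = \mathcal{O}(h^\infty)$. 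The same spectral gap yields $\|(\mathscr{L}_{h,\ell} - \mu(h))^{-1}|_{\mathrm{Ran}(1-\Pi_{h,\ell})}\| = \mathcal{O}(h^{-3/2})$, so $\|R\|_{L^2} = \mathcal{O}(h^\infty)$.

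The main difficulty is to choose a weight $\varphi$ that fits the hypotheses of Proposition~\ref{prop.Agmonfunctional} and approximates $\Phi_\ell$ on $K$ to within an $\mathcal{O}(h^{1/2}\log h^{-1})$ correction. The obstruction is that $\Phi_\ell$ itself saturates $b_\ell = \tfrac{a''(0)}{2}(\Phi'_\ell)^2$ identically, so it violates \eqref{eq:cond}, while a fractional weight like $\sqrt{1-\varepsilon}\,\widetilde{\Phi}_\ell$ differs from $\Phi_\ell$ by a factor $e^{\varepsilon\Phi_\ell/\sqrt{h}}$ that is not polynomially controlled on $K$. I would therefore take
\[
\varphi(x) = \widetilde{\Phi}_\ell(x) - Ch^{1/2}\log\bigl(1 + |x - x_\ell|/h^{1/4}\bigr),
\]
suitably mollified near $x_\ell$, with $C$ large. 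It decomposes as $\varphi' = \widetilde{\Phi}'_\ell + h^{1/4}\varphi_2$ with $\varphi_2 \in S_{1/4}(\R)$, and the linear cross-term produced in $(\varphi')^2$ yields $b_\ell - \tfrac{a''(0)}{2}(\varphi')^2 \geq c_0' h^{1/2}$ outside $B(x_\ell, Rh^{1/4})$ for $R$ large enough, so \eqref{eq:cond} holds with $C_0 > M$; moreover $|\varphi| = \mathcal{O}(h^{1/2})$ on that ball, which is \eqref{eq:cond2}.

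Applying Proposition~\ref{prop.Agmonfunctional} to $v := e^{\varphi/\sqrt{h}}R$ with $\lambda = \mu(h)$, I would bound the right-hand side as follows. The ball contribution $\|v\|_{L^2(B(x_\ell, Rh^{1/4}))}$ is $\mathcal{O}(h^\infty)$ since $e^{\varphi/\sqrt{h}}$ is bounded there and $\|R\|_{L^2} = \mathcal{O}(h^\infty)$. For the forcing term $(\mathscr{L}_{h,\ell}^\varphi - \mu(h))v = e^{\varphi/\sqrt{h}}(\mathscr{L}_{h,\ell} - \mu(h))\Psi_\ell^{\mathrm{WKB}}$, on the interval $I$ where the cutoff in \eqref{eq.wkbqm} equals $1$, Proposition~\ref{prop.WKB} gives a weighted $L^\infty$ bound with the stronger weight $e^{\Phi_\ell/\sqrt{h}}\geq e^{\varphi/\sqrt{h}}$; outside $I$, the quasimode is supported in a fixed compact set and carries the full $e^{-\Phi_\ell/\sqrt{h}}$ decay, so the off-diagonal smoothing of $(a + hb)^w$ still delivers an $\mathcal{O}(h^\infty)$ $L^2$-bound. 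The Agmon inequality then reduces to $\|v\|^2 \leq \mathcal{O}(h^\infty)\|v\| + \mathcal{O}(h^\infty)$, forcing $\|v\|_{L^2} = \mathcal{O}(h^\infty)$.

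To conclude, on $K$ the defect $\Phi_\ell - \varphi$ is at most $Ch^{1/2}\log(1 + \mathrm{diam}(K)/h^{1/4}) = \mathcal{O}(h^{1/2}\log h^{-1})$, so $e^{(\Phi_\ell - \varphi)/\sqrt{h}} = \mathcal{O}(h^{-C/4})$ is only polynomially large. Therefore
\[
\|e^{\Phi_\ell/\sqrt{h}}R\|_{L^2(K)} \leq \mathcal{O}(h^{-C/4})\,\|e^{\varphi/\sqrt{h}}R\|_{L^2(K)} = \mathcal{O}(h^\infty),
\]
which is the claim.
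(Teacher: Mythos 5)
Your proposal is correct and follows essentially the same route as the paper: the plain $\mathcal{O}(h^\infty)$ bound on $\Psi_\ell^{\mathrm{WKB}}-\Pi_{h,\ell}\Psi_\ell^{\mathrm{WKB}}$ from the spectral gap, then Proposition \ref{prop.Agmonfunctional} applied with a weight $\widetilde{\Phi}_\ell$ minus an $\mathcal{O}(h^{1/2}\log h^{-1})$ logarithmic correction (the paper takes $-Nh^{1/2}\int\rho_{R,h}\widetilde{\Phi}_\ell'/\widetilde{\Phi}_\ell$, which is the same device as your $-Ch^{1/2}\log(1+|x-x_\ell|/h^{1/4})$ up to constants), and finally a conversion back to the weight $e^{\Phi_\ell/\sqrt{h}}$ on $K$ at the cost of a polynomial factor absorbed by $\mathcal{O}(h^\infty)$. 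The only place where you are looser than the paper is the forcing term outside the region where the cutoff equals $1$, which the paper handles by writing it as $e^{(\varphi_h-\Phi_\ell)/\sqrt{h}}(\mathscr{L}_{h,\ell}^{\Phi_\ell}-\mu(h))(h^{-1/8}\chi u_{1,h})$ and using that $\widetilde{\Phi}_\ell-\Phi_\ell$ is strictly negative there; this is a routine fix, not a gap.
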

\begin{proof}
The idea to use Proposition \ref{prop.Agmonfunctional} with a weight $\varphi$ that is a refined version of $\sqrt{1-\epsilon}\widetilde{\Phi}_\ell$ close to $x_\ell$. For that purpose, for $R>0$, $N \in \N$, we let
\[ \varphi_h(x) := \widetilde{\Phi}_\ell(x)  - N h^{\frac12}\int_{x_\ell}^x \rho_{R,h}(s) \frac{\widetilde \Phi_\ell'(s)}{\widetilde \Phi_\ell(s)}\mathrm{d}s \,,\]
where the function $\rho_{R,h}$ is given by
\[\rho_{R,h}(s) := \rho\left(\frac{s-x_\ell}{Rh^{\frac14}}\right)\,,\] 
with $\rho \in \mathscr{C}_0^{\infty}(\R)$ satisfying $\rho \equiv 1$ on $\complement B(0,1)$, $\rho \geq 0$ and $\mathrm{supp }\,\rho \in \complement B(0,\frac12)$. We notice that
\[\varphi_h' = \widetilde\Phi_\ell' -h^\frac14 N\rho_{R,h} \frac{h^\frac14 \widetilde \Phi_\ell'}{\widetilde \Phi_\ell}\,,\] 
where $\widetilde{\Phi}_\ell' \in S(\R)$ and $\displaystyle N\rho_{R,h} \frac{h^\frac14 \widetilde{\Phi}_\ell'}{\widetilde{\Phi}_\ell} \in S_{\frac14}(\R)$.

Let us establish \eqref{eq:cond} with $C_0>M$ ($M$ being given in $\eqref{m}$).  Away from a small neighbourhood of $x_\ell$ denoted by $\mathrm{Neigh}(x_\ell)$, (independent of $h$), we have $\min_{\complement \mathrm{Neigh}(x_\ell)} \widetilde{\Phi}_{\ell} >0$ and $\max_{\complement \mathrm{Neigh}(x_\ell)} \widetilde{\Phi}_{\ell} <+\infty$. This implies, for $h$ small enough such that $h^\frac12 \frac{N}{\min_{\complement \mathrm{Neigh}(x_\ell)}\widetilde{\Phi}_{\ell}} < 1$, for all $x \in \complement \mathrm{Neigh}(x_\ell)$,
\begin{multline}
(\varphi_h'(x))^2 \leq \widetilde{\Phi}_\ell'^2(x)\left(1- h^\frac12 \frac{N}{\max_{\complement \mathrm{Neigh}(x_\ell)} \widetilde{\Phi}_{\ell}}\right)^2 \\ = \frac{2b_\ell(x,0)}{a''(0)}\left(1- 2h^\frac12 \frac{N}{\max_{\complement \mathrm{Neigh}(x_\ell)}\widetilde{\Phi}_\ell} + \mathcal{O}(h)\right)\,.
\end{multline}
Thus \eqref{eq:cond} is satisfied outside $\mathrm{Neigh}(x_\ell)$. Then, we choose $R = R(N)$ so that $\Phi_\ell(x_\ell \pm \frac12 Rh^{1/4})\geq Nh^{\frac12}$. Thus $\frac{Nh^\frac12}{\Phi_\ell} \leq 1$ on $\{ \rho_{R,h} = 1 \}$. Therefore, for all $x \in \complement B(x_\ell,Rh^{1/4}) \cap \mathrm{Neigh}(x_\ell)$,
\begin{align*}
b_\ell(x,0)- \frac{a''(0)}{2}\varphi_h'(x)^2 & \geq \frac{a''(0)}{2} \Phi_\ell'^2 \left(1-(1-\frac{Nh^\frac12}{\Phi_\ell})^2\right) \\ 
& \geq Nh^\frac12 \frac{a''(0)}{2} c \left( 2 - \frac{Nh^\frac12}{\Phi_\ell} \right) \\
& \geq Nh^\frac12 \frac{a''(0)}{2} c \,,
\end{align*}
where we used that $\frac{(\Phi'_\ell)^2}{\Phi_\ell}$ is continuous near $x_\ell$ and bounded from below by a constant $c >0$. Taking $N$ large enough the condition \eqref{eq:cond} is satisfied with $C_0 > M$. \\

Let us consider $v=e^{\frac{\varphi_h}{\sqrt{h}}}(\Psi^{\mathrm{WKB}}_{\ell}-\Pi_h\Psi_\ell^{\mathrm{WKB}})$, which satisfies 
\[ (\mathscr{L}_{h,\ell}^{\varphi_h}-\mu(h))v = e^{(\varphi_h - \Phi_\ell)/\sqrt{h}}(\mathscr{L}_{h,\ell}^{\Phi_\ell}-\mu(h))h^{-\frac{1}{8}} \underline{\chi}_\ell u_{1,h}\,.\]
Recalling that $\underline{\chi}_\ell = 1$ on $[-2A,2A]$, by Proposition \ref{prop.WKB}, the Borel lemma and using $\varphi_h \leq \Phi_\ell$ on $[-2A,2A]$, we get
\begin{equation}
\lVert (\mathscr{L}_{h,\ell}^{\varphi_h}-\mu(h))v \rVert_{L^2(-2A,2A)} = \mathcal{O}(h^{\infty})\,.
\end{equation}
Moreover, by using Lemma \ref{lem.tildePhiell}, especially because \[ \forall s \in \complement (-2A,2A), \quad \widetilde{\Phi}_\ell(s)-\Phi_\ell(s) < \max_{\pm} \left(\widetilde{\Phi}_\ell(\pm2A)-\Phi_\ell(\pm2A)\right)<0\, ,\]
we obtain
\begin{equation*}
\begin{split}
	\lVert (\mathscr{L}_{h,\ell}^{\varphi_h}-\mu(h)))v & \rVert_{L^2(\complement(-2A,2A))} = \lVert e^{\frac{\varphi_h-\Phi_\ell}{\sqrt{h}}} (\mathscr{L}_{h,\ell}^{\Phi_\ell} - \mu(h)) (h^{-\frac18}\underline{\chi}_\ell u_{1,h}) \rVert_{L^2(\complement(-2A,2A))} \\
& \leq  |e^{\frac{\widetilde{\Phi}_\ell-\Phi_\ell}{\sqrt{h}}}|_{L^{\infty}(\complement(-2A,2A))} \| \mathscr{L}_{h,\ell}^{\Phi_\ell} -\mu(h)\|_{\mathcal{L}(L^2(\R))} \|h^{-\frac18}\underline{\chi}_\ell u_{1,h} \|  \\
&= \mathcal{O}(h^{\infty})\,.
\end{split}
\end{equation*}
This gives
\[\lVert (\mathscr{L}_{h,\ell}^{\varphi_h}-\lambda_1(\mathscr{L}_{h,\ell})) v\rVert = \mathcal{O}(h^\infty)\,.\]
We apply Proposition \ref{prop.Agmonfunctional} to get
\begin{equation}\label{eq.boundv}
\|  v \|  \leq \widetilde{C} \| v \|_{L^2(B(x_\ell,Rh^{1/4}))}+\mathcal{O}(h^\infty)\,.
\end{equation}
Moreover, by the spectral theorem, then Propositions \ref{prop.simplewell} and \ref{prop.quasimodes}, we get
\begin{align*}
\lVert \Psi_{\ell}^{\mathrm{WKB}} - \Pi_h \Psi_{\ell}^{\mathrm{WKB}} \|  & \leq (\lambda_{2}(\mathscr{L}_{h,\ell}) -\lambda_{1}(\mathscr{L}_{h,\ell}))^{-1} \| (\mathscr{L}_{h,\ell} - \lambda_1(\mathscr{L}_{h,\ell})) \Psi_\ell^{\mathrm{WKB}} \|  \\ 
& = (\lambda_{2}(\mathscr{L}_{h,\ell}) -\lambda_{1}(\mathscr{L}_{h,\ell}))^{-1} \mathcal{O}(h^\infty)  = \mathcal{O}(h^\infty) \,.
\end{align*} 

By using \eqref{eq.boundv} and \eqref{eq:cond2}, we deduce that
\begin{align*}
\lVert e^{\frac{\varphi_h}{\sqrt{h}}} (\Psi_\ell^{\mathrm{WKB}} - \Pi_{h,\ell} \Psi_\ell^{\mathrm{WKB}}) \rVert & \leq \widetilde{C} \|e^{\frac{\varphi_h}{\sqrt{h}}}( \Psi_\ell^{\mathrm{WKB}} - \Pi_{h,\ell} \Psi_\ell^{\mathrm{WKB}})\|_{L^2(B(x_\ell,Rh^{1/4}))} +\mathcal{O}(h^\infty)\\
& \leq \widetilde{C} e^{C} \|\Psi_\ell^{\mathrm{WKB}} - \Pi_{h,\ell} \Psi_\ell^{\mathrm{WKB}}\|_{L^2(B(x_\ell,Rh^{1/4}))} +\mathcal{O}(h^\infty)\\ & =  \mathcal{O}(h^\infty). 
\end{align*}
Then, we find that
\begin{align*}
\mathcal{O}(h^{\infty}) & = \| e^{\frac{\varphi_h}{\sqrt{h}}} (\Psi_\ell^{\mathrm{WKB}} - \Pi_{h,\ell} \Psi_\ell^{\mathrm{WKB}}) \| \geq \| e^{\frac{\varphi_h}{\sqrt{h}}} (\Psi_\ell^{\mathrm{WKB}} - \Pi_{h,\ell} \Psi_\ell^{\mathrm{WKB}}) \|_{L^2(K)} \\ & \geq \inf_{K} e^{\frac{\varphi_h - \Phi_\ell}{\sqrt{h}}}  \lVert e^{\frac{\Phi_\ell}{\sqrt{h}}} (\Psi_\ell^{\mathrm{WKB}} - \Pi_{h,\ell} \Psi_\ell^{\mathrm{WKB}}) \rVert_{L^2(K)} \,.
\end{align*} 
There remains to notice that
\[e^{\frac{\varphi_h - \Phi_\ell}{\sqrt{h}}} \geq \left|\frac{|\Phi_\ell|_{L^{\infty}(K)}}{\Phi_\ell(x_\ell \pm Rh^{1/4}/2)} \right|^{-N} \underset{h\to 0}{\sim} \left|\frac{8|\Phi_\ell|_{L^{\infty}(K)}}{\Phi_\ell''(x_\ell)R^2}\right|^{-N} h^{\frac{N}{2}},\]  
which is absorbed by the $\mathcal{O}(h^\infty)$ in the left-hand-side. This concludes the proof.
\end{proof}

\section{The interaction term} \label{sec:inter}
In this section, we prove Theorem \ref{thm.main}. We recall that $U$ is given in \eqref{eq.U}. We consider $\chi_\ell \in \mathscr{C}_0^{\infty}(\R)$ such that $\chi_\ell \geq 0$, $\chi_\ell = 1$ on $[-A,x_r - 2\eta]$ and $\chi_\ell = 0$ on $(-\infty,-2A) \cup (x_\ell-\eta,+\infty)$. We let $\chi_r = U \chi_\ell$, $\psi_{h,r}=U\psi_{h,\ell}$. The function $\psi_{h,r}$ is a groundstate of $\mathscr{L}_{h,r}$. We also set 
 \[f_{h,\ell} = \chi_\ell \psi_{h,\ell}\,,\qquad f_{h,r} =U f_{h,\ell}\,.\] 
 
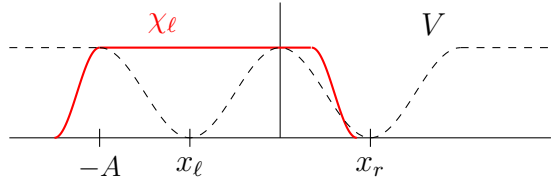
\begin{figure}[h]
	\begin{center}
		\begin{tikzpicture}[scale = 1.2]
			\draw (-3,0) -- (3,0);
			\draw (0,0) -- (0,1.5);
			\draw (-1,-0.1)--(-1,0.1);
			\draw (-2,-0.1)--(-2,0.1);
			\draw (1,-0.1)--(1,0.1);
			\draw (-1,-0.1) node[below] {$x_\ell$};
			\draw (1,-0.1) node[below] {$x_r$};
			\draw (-2,-0.1) node[below, scale = 0.9] {$-A$};
			\draw[color = red] (-1.3,1.5) node[below] {\bf $\chi_\ell $};
			\draw (1.7,1.5) node[below, color = black, thin] {$V $};
			\draw[domain=-2:0.34,samples=10,color=red, thick] plot ({\x},{1});
			\draw[domain=-2.5:-2,samples=100,color=red, thick] plot ({\x},{0.5*(sin(2*180*(\x-4.75))+1)});
			\draw[domain=0.35:0.85,samples=50,color=red, thick] plot ({\x},{0.5*(1+sin(2*180*(\x+0.9)))});
			\draw[domain=-2:2,samples=100,color=black, thin, dashed] plot ({\x},{0.5*(1-cos(180*(\x+1))))});
			\draw[domain=-3:-2,samples=100,color=black, thin, dashed] plot ({\x},{1});
			\draw[domain=2:3,samples=100,color=black, thin, dashed] plot ({\x},{1});
		\end{tikzpicture}
	\end{center}
	\caption{The function $\chi_\ell$}
	\label{fig:2}
\end{figure} 
 
Recalling Proposition \ref{prop.quasimodes} we also denote $u_{h,\ell} = u_{1,h}$ and $u_{h,r} = Uu_{h,\ell}$. Our WKB Ansätze are, with $\underline{\chi}_\ell$ defined in the previous section (and $\underline{\chi}_r = \underline{\chi}_\ell(-\cdot)$)
\[\Psi_\ell^{\mathrm{WKB}} =  h^{-\frac18}\underline{\chi}_\ell e^{-\frac{\Phi_\ell}{\sqrt{h}}} u_{h,\ell}\,,\qquad \Psi_r^{\mathrm{WKB}} =U\Psi_\ell^{\mathrm{WKB}}=   h^{-\frac18}\underline{\chi}_r e^{-\frac{\Phi_r}{\sqrt{h}}} u_{h,r}\,.\]
Since $\| \Psi_\ell^{\mathrm{WKB}} \| = 1 + \mathcal{O}(h^\frac12)$, we can write 
\begin{equation}
	\label{eq.ch}
	\Pi_{h,\ell} \Psi_\ell^{\mathrm{WKB}} = c(h) \psi_{h,\ell} \hbox{ with }|c(h)| = 1 + \mathcal{O}(h^\frac12)\, ,
\end{equation} 
and, by symmetry, $\Pi_{h,r} \Psi_r^{\mathrm{WKB}} = c(h) \psi_{h,r}$. Let us also denote by $\psi_{h,1},\psi_{h,2}$ an orthonormalized pair of eigenfunctions of $\mathscr{L}_h$ associated with its first two eigenvalues.  We also consider the orthogonal projection $\Pi_h : L^2(\R) \longrightarrow \mathrm{span}(\psi_{h,1},\psi_{h,2})$.

\subsection{The interaction formula}
The aim of this section is to establish the following proposition.
\begin{proposition}\label{prop.splitformula}
	We have
	\begin{equation}
	\label{eq:inter}
	\lambda_2(\mathscr{L}_h)-\lambda_1(\mathscr{L}_h)=2|w_h|+\widetilde{\mathcal{O}}(e^{-\frac{2S}{\sqrt{h}}})\,,
	\end{equation}
	with $\mu(h) = \lambda_1(\mathscr{L}_{h,\ell})$, $w_h=\langle(\mathscr{L}_h-\mu(h))f_{h,\ell},f_{h,r}\rangle$ and $S = \int_{x_\ell}^{x_r} \sqrt{b(s,0)}\,\mathrm{d}s$.	
\end{proposition}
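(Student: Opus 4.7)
The plan is to follow the classical Helffer-Sjöstrand scheme: we reduce the computation of $\lambda_1(\mathscr{L}_h)$ and $\lambda_2(\mathscr{L}_h)$ to the diagonalisation of the $2\times 2$ matrix representing $\mathscr{L}_h$ on the eigenspace $E = \mathrm{span}(\psi_{h,1},\psi_{h,2})$, using a basis built from the one-well groundstates, and read the splitting off the off-diagonal entry. First, since $\mathrm{supp}\,k_\ell\cap\mathrm{supp}\,\chi_\ell=\emptyset$ and $\mathscr{L}_{h,\ell}\psi_{h,\ell}=\mu(h)\psi_{h,\ell}$, one has
\[(\mathscr{L}_h-\mu(h))f_{h,\ell} = [\mathscr{L}_h,\chi_\ell]\psi_{h,\ell}\,,\]
and the pseudo-differential commutator $[\mathscr{L}_h,\chi_\ell]$ has symbol supported in $\mathrm{supp}\,\chi_\ell'$, hence outside a neighbourhood of $x_\ell$. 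Combined with the Agmon estimates of Corollary \ref{corollary.agmon} (applied with some $\varepsilon$ close to $0$), this yields $\|(\mathscr{L}_h-\mu(h))f_{h,\ell}\|=\widetilde{\mathcal{O}}(e^{-S/\sqrt{h}})$ and a symmetric statement for $f_{h,r}$, since the weight $\widetilde{\Phi}_\ell$ is at least $\Phi_\ell(x_r)$ on the pieces of $\mathrm{supp}\,\chi_\ell'$ that separate the two wells.

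Next, setting $\tilde f_\star = \Pi_h f_{h,\star}$ for $\star \in \{\ell, r\}$ and invoking the spectral gap $\lambda_3(\mathscr{L}_h)-\lambda_2(\mathscr{L}_h)\geq c h^{3/2}$ of Proposition \ref{prop.doublewell} together with the quasimode bound, the spectral theorem gives
\[\|(1-\Pi_h)f_{h,\star}\|\leq \frac{\|(\mathscr{L}_h-\mu(h))f_{h,\star}\|}{\mathrm{dist}(\mu(h),\sigma(\mathscr{L}_h)\setminus\{\lambda_1,\lambda_2\})}=\widetilde{\mathcal{O}}(e^{-S/\sqrt{h}})\,,\]
the polynomial prefactor $h^{-3/2}$ being absorbed into $\widetilde{\mathcal{O}}$. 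A direct Agmon estimate on each well gives $\langle f_{h,\ell},f_{h,r}\rangle=\widetilde{\mathcal{O}}(e^{-2S/\sqrt{h}})$ and $\|f_{h,\star}\|=1+o(1)$, so $(\tilde f_\ell,\tilde f_r)$ is an almost orthonormal basis of $E$; moreover $U\tilde f_\ell=\tilde f_r$, so that the matrix $M$ of $\mathscr{L}_h-\mu(h)$ in this basis takes the Hermitian form
\[M=\begin{pmatrix}\alpha_h & w_h\\ \overline{w_h} & \alpha_h\end{pmatrix}+\widetilde{\mathcal{O}}(e^{-2S/\sqrt{h}})\,,\qquad \alpha_h=\langle[\mathscr{L}_h,\chi_\ell]\psi_{h,\ell},\chi_\ell\psi_{h,\ell}\rangle\,,\]
after substituting $f_{h,\star}$ for $\tilde f_\star$ at a cost $\widetilde{\mathcal{O}}(e^{-2S/\sqrt{h}})$. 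Since both factors in $\alpha_h$ carry the Agmon weight $e^{-S/\sqrt{h}}$ on $\mathrm{supp}\,\chi_\ell'$, one has $\alpha_h=\widetilde{\mathcal{O}}(e^{-2S/\sqrt{h}})$, so the eigenvalues of $M$ are $\pm|w_h|+\widetilde{\mathcal{O}}(e^{-2S/\sqrt{h}})$; they are also equal to $\lambda_{1,2}(\mathscr{L}_h)-\mu(h)$, which proves \eqref{eq:inter}.

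The main obstacle is the exponentially sharp projection estimate $\|(1-\Pi_h)f_{h,\star}\|=\widetilde{\mathcal{O}}(e^{-S/\sqrt{h}})$, which is the content of the announced Lemma \ref{lem.fstarquasi}. Because the spectral gap above $\lambda_2(\mathscr{L}_h)$ is only of order $h^{3/2}$, the polynomial loss inherent to the resolvent must be absorbed into $\widetilde{\mathcal{O}}$ without degrading the exponential rate. This hinges on the accurate Agmon bound of Corollary \ref{corollary.agmon} with the truncated weight $\widetilde{\Phi}_\ell$ of Lemma \ref{lem.tildePhiell}, together with a careful symbolic treatment of the pseudo-differential commutator $[\mathscr{L}_h,\chi_\ell]$, whose action on $\psi_{h,\ell}$ has to be controlled with exponential precision on $\mathrm{supp}\,\chi_\ell'$.
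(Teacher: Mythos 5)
Your overall architecture (truncated one-well groundstates as quasimodes, projection onto the two-dimensional eigenspace, reduction to a $2\times 2$ Hermitian matrix) is the same as the paper's, but two steps do not survive scrutiny in the pseudo-differential setting. First, the assertion that $[\mathscr{L}_h,\chi_\ell]$ ``has symbol supported in $\mathrm{supp}\,\chi_\ell'$'' is false: for a genuinely pseudo-differential $\mathscr{L}_h$, the symbolic calculus localizes the commutator in $\mathrm{supp}\,\chi_\ell'$ only modulo $\mathcal{O}(h^\infty)$, and an $\mathcal{O}(h^\infty)$ remainder is useless when the target is $\widetilde{\mathcal{O}}(e^{-S/\sqrt{h}})$. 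This is exactly where the pseudo-differential nature of the problem bites; it infects your bound on $(\mathscr{L}_h-\mu(h))f_{h,\ell}$ and, more seriously, your claim $\alpha_h=\widetilde{\mathcal{O}}(e^{-2S/\sqrt{h}})$, which needs the commutator to be a genuinely local object so that \emph{both} factors can be weighted on $\mathrm{supp}\,\chi_\ell'$. The paper avoids commutators altogether: it writes $f_{h,\ell}=\psi_{h,\ell}-(1-\chi_\ell)\psi_{h,\ell}$, uses $(\mathscr{L}_h-\mu(h))\psi_{h,\ell}=-k_\ell\psi_{h,\ell}$, the boundedness of $\mathscr{L}_h$ and the Agmon estimate on $\mathrm{supp}\,k_\ell$ and $\mathrm{supp}(1-\chi_\ell)$ to get the quasimode bound (Lemmas \ref{lem.psistarquasi}--\ref{lem.fstarquasi}); for the diagonal entries one then moves $(\mathscr{L}_h-\mu(h))$ onto the other factor by self-adjointness, so that each factor is $\widetilde{\mathcal{O}}(e^{-S/\sqrt{h}})$ in $L^2$ and Cauchy--Schwarz gives $e^{-2S/\sqrt{h}}$.

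Second, your overlap estimate $\langle f_{h,\ell},f_{h,r}\rangle=\widetilde{\mathcal{O}}(e^{-2S/\sqrt{h}})$ is wrong: between the wells one has $\Phi_\ell+\Phi_r=\Phi_\ell(x_r)$, the Agmon distance between the \emph{two} wells, not twice that distance, so the overlap is only $\widetilde{\mathcal{O}}(e^{-S/\sqrt{h}})$. Consequently $(\Pi_hf_{h,\ell},\Pi_hf_{h,r})$ is not orthonormal up to $e^{-2S/\sqrt{h}}$, and the substitution ``at a cost $\widetilde{\mathcal{O}}(e^{-2S/\sqrt{h}})$'' is not justified: the naive off-diagonal entry is $w_h+\mu(h)\langle f_{h,\ell},f_{h,r}\rangle$, and the second term is only $\widetilde{\mathcal{O}}(h^{\frac32}e^{-S/\sqrt{h}})$, which is far from $\widetilde{\mathcal{O}}(e^{-2S/\sqrt{h}})$ (it happens to be $o(|w_h|)$, so the final theorem would survive, but the remainder claimed in \eqref{eq:inter} would not). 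The paper repairs this by the symmetric orthonormalization $G^{-\frac12}LG^{-\frac12}$ with $G=\mathrm{Id}+\mathrm{T}$, in which the $\mu(h)\mathrm{T}$ contributions cancel exactly at order $e^{-S/\sqrt{h}}$, leaving the matrix with diagonal $\mu(h)$ and off-diagonal $w_h$ up to $\widetilde{\mathcal{O}}(e^{-2S/\sqrt{h}})$. You need this cancellation to reach the stated error bound.
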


We recall that
$r_h=\widetilde{\mathcal{O}}(e^{-\frac{S}{\sqrt{h}}})$ means that for every $\gamma >0$, we have $r_h=\mathcal{O}(e^{-\frac{S-\gamma}{\sqrt{h}}})$ up to adjusting $\eta$.

\subsubsection{The space $\mathrm{span}(f_{h,\ell},f_{h,r})$ is a good approximation of  $\mathrm{span}(\psi_{h,1},\psi_{h,2})$}
The groundstates of $\mathscr{L}_{h,\ell}$ and $\mathscr{L}_{h,r}$ are good quasimodes for the double well operator $\mathscr{L}_h$, with remainder of order  $\widetilde{\mathcal{O}}(e^{-\frac{S}{\sqrt{h}}})$.

\begin{lemma}\label{lem.psistarquasi}
For $\star = r,\ell$, we have
\begin{equation*}
(\mathscr{L}_h -\lambda_1(\mathscr{L}_{h,\star})) \psi_{h,\star} = \widetilde{\mathcal{O}}(e^{-\frac{S}{\sqrt{h}}})\,.
\end{equation*}
\end{lemma}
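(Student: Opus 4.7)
The plan is as follows. By the unitary equivalence $\mathscr{L}_{h,r} = U^* \mathscr{L}_{h,\ell} U$ already established in the paper, the statement for $\star = r$ follows from that for $\star = \ell$, so I focus on the latter. The key observation is that $\mathscr{L}_{h,\ell} - \mathscr{L}_h = k_\ell$, so the eigenvalue equation $\mathscr{L}_{h,\ell} \psi_{h,\ell} = \lambda_1(\mathscr{L}_{h,\ell}) \psi_{h,\ell}$ yields
\[
(\mathscr{L}_h - \lambda_1(\mathscr{L}_{h,\ell})) \psi_{h,\ell} = -k_\ell \psi_{h,\ell}.
\]
Thus the entire problem reduces to bounding $\|k_\ell \psi_{h,\ell}\|$.

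Since $k_\ell$ is bounded and $\mathrm{supp}\,k_\ell \subset D(x_r, \eta)$ lies far from the well at $x_\ell$, we have
\[
\|k_\ell \psi_{h,\ell}\| \leq \|k_\ell\|_{L^\infty} \|\psi_{h,\ell}\|_{L^2(D(x_r, \eta))}.
\]
The right-hand side is controlled by the Agmon estimate of Corollary \ref{corollary.agmon}: for every $\varepsilon \in (0, 1)$, the bound $\|e^{(1-\varepsilon)\widetilde\Phi_\ell/\sqrt h} \psi_{h,\ell}\| \leq C \|\psi_{h,\ell}\|$ holds. Choosing $\eta$ small enough, Lemma \ref{lem.tildePhiell} gives $\widetilde\Phi_\ell = \Phi_\ell$ on $D(x_r, \eta)$, and the continuity of $\Phi_\ell$ yields $\inf_{D(x_r, \eta)} \widetilde\Phi_\ell \geq \Phi_\ell(x_r) - C'\eta$. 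Combining these,
\[
\|\psi_{h,\ell}\|_{L^2(D(x_r, \eta))} \leq C\exp\left(-\frac{(1-\varepsilon)(\Phi_\ell(x_r) - C'\eta)}{\sqrt h}\right).
\]

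To conclude, one relates $\Phi_\ell(x_r)$ to the constant $S = \int_{x_\ell}^{x_r} \sqrt{b(s,0)}\,\mathrm{d}s$: since $b_\ell \geq b$ and $\Phi_\ell$ carries the natural scaling factor $\sqrt{2/a''(0)}$ of the effective Schrödinger operator, $\Phi_\ell(x_r)$ dominates $S$ in the relevant sense, so the upper bound above is of the form $\exp(-(S - \gamma)/\sqrt h)$ for $\gamma > 0$ as small as we wish, once $\eta$ and $\varepsilon$ are chosen small enough. This is precisely the content of the symbol $\widetilde{\mathcal{O}}(e^{-S/\sqrt h})$, and the lemma follows. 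The only substantive input is Corollary \ref{corollary.agmon}; once that optimal Agmon decay is in hand, the proof is a direct localization argument, and the only point requiring care is the bookkeeping that turns $\Phi_\ell(x_r)$ into $S$ without losing the leading exponential rate.
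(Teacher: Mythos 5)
Your proposal is correct and follows essentially the same route as the paper: reduction to $\star=\ell$ by the symmetry $U$, the identity $(\mathscr{L}_h-\lambda_1(\mathscr{L}_{h,\ell}))\psi_{h,\ell}=-k_\ell\psi_{h,\ell}$, and the Agmon bound of Corollary \ref{corollary.agmon} combined with $\widetilde\Phi_\ell=\Phi_\ell$ near $x_r$ and $\Phi_\ell=S+\mathcal{O}(\eta)$ on $\mathrm{supp}\,k_\ell$, absorbing the losses in $\varepsilon$ and $\eta$ into the $\widetilde{\mathcal{O}}$ notation. No substantive difference from the paper's argument.
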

\begin{proof}
By symmetry, it suffices to prove for $\star = \ell$. We recall that 
 \[\mathscr{L}_h \psi_{h,\ell} = \lambda_1(\mathscr{L}_{h,\ell}) \psi_{h,\ell} - k_\ell \psi_{h,\ell}\,,\] 
 where $k_{\ell}$ has support in $D(x_r,\eta)$.

Thanks to Corollary \ref{corollary.agmon}, we have 
\[ \| k_\ell \psi_{h,\ell} \| \leq |k_{\ell}|_{L^{\infty}}e^{-\frac{(1-\epsilon) (S +\mathcal{O}(\eta))}{\sqrt{h}}} \left \| e^{(1-\epsilon)\frac{\widetilde{\Phi}_\ell}{\sqrt{h}}} \psi_{h,\ell} \right \| \leq C e^{-\frac{(1-\epsilon) (S + \mathcal{O}(\eta))}{\sqrt{h}}} \| \psi_{h,\ell} \|\,,\]
where we used that  $\| \Phi_\ell(x) - S \|_{L^{\infty}(x_r-\eta,x_r+\eta)} = \mathcal{O}(\eta)$. Up to choosing
$\epsilon,\eta >0$ small enough, we have the desired estimate.
\end{proof}
In fact, the same estimate holds if we insert the cutoff functions $\chi_\star$.

\begin{lemma}\label{lem.fstarquasi}
For $\star = r,\ell$, we have
\[(\mathscr{L}_h - \lambda_1(\mathscr{L}_{h,\ell}))f_{h,\star} = \widetilde{\mathcal{O}}(e^{-\frac{S}{\sqrt{h}}})\,,\quad (\mathrm{Id}-\Pi_h)f_{h,\star}=\widetilde{\mathcal{O}}(e^{-\frac{S}{\sqrt{h}}})\,.\]
Moreover, we have $\langle f_{h,\ell}, f_{r,h}\rangle=\widetilde{\mathcal{O}}(e^{-\frac{S}{\sqrt{h}}})$ and
$\lambda_j(\mathscr{L}_{h})=\mu(h)+\widetilde{\mathcal{O}}(e^{-\frac{S}{\sqrt{h}}})$
\end{lemma}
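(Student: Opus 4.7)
The four assertions build on each other, so I will prove them in the order (a), (b), (c), (d). Estimate (a) is the heart of the matter. Since $\mathrm{supp}(\chi_\ell) \cap \mathrm{supp}(k_\ell) = \emptyset$, applying $\chi_\ell$ to the eigenvalue equation $\mathscr{L}_h \psi_{h,\ell} = \mu(h)\psi_{h,\ell} - k_\ell \psi_{h,\ell}$ yields $\chi_\ell \mathscr{L}_h \psi_{h,\ell} = \mu(h) f_{h,\ell}$, so
$(\mathscr{L}_h - \mu(h)) f_{h,\ell} = [\mathscr{L}_h, \chi_\ell] \psi_{h,\ell}$,
and all the content is in bounding the right-hand side. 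Since $\chi_\ell$ depends only on $x$, the Moyal expansion shows that every term of the symbol of $[\mathscr{L}_h, \chi_\ell]$ carries a factor $\chi_\ell^{(k)}$ with $k \geq 1$, so the symbol is supported in $\mathrm{supp}(\chi_\ell') \times \R$. Pick $\widetilde{\chi} \in \mathscr{C}_0^\infty(\R)$ equal to $1$ on a neighborhood of $\mathrm{supp}(\chi_\ell')$ and vanishing near $x_\ell$, and split $\psi_{h,\ell} = \widetilde{\chi}\psi_{h,\ell} + (1-\widetilde{\chi})\psi_{h,\ell}$. On $\mathrm{supp}(\widetilde{\chi})$ one has $\widetilde{\Phi}_\ell \geq S - \mathcal{O}(\eta)$ (by the choice of $A$ and Lemma~\ref{lem.tildePhiell}), so Corollary~\ref{corollary.agmon} gives $\|\widetilde{\chi}\psi_{h,\ell}\| = \widetilde{\mathcal{O}}(e^{-S/\sqrt{h}})$. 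For the complementary piece, the symbols of $[\mathscr{L}_h, \chi_\ell]$ and of multiplication by $1-\widetilde{\chi}$ have disjoint $x$-supports, so the composition has operator norm $\mathcal{O}(h^\infty)$ by the pseudo-local property of the Weyl calculus.

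Estimate (b) will then be obtained via the standard spectral estimate $\|(\mathrm{Id} - \Pi_h) u\| \leq \|(\mathscr{L}_h - \mu(h))u\| / \mathrm{dist}(\mu(h), \sigma(\mathscr{L}_h) \setminus \{\lambda_1, \lambda_2\})$: by Proposition~\ref{prop.doublewell}, $\lambda_{1,2}(\mathscr{L}_h) - \mu(h) = \mathcal{O}(h^\infty)$ while $\lambda_3(\mathscr{L}_h) - \lambda_2(\mathscr{L}_h) \geq c h^{3/2}$, so the denominator is $\geq c h^{3/2}/2$ for $h$ small, and the factor $h^{-3/2}$ is absorbed into $\widetilde{\mathcal{O}}$. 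For (c), I apply Corollary~\ref{corollary.agmon} symmetrically, writing $\psi_{h,\star} = e^{-(1-\varepsilon)\widetilde{\Phi}_\star/\sqrt{h}} g_\star$ with $\|g_\star\|$ bounded; then
$|\langle f_{h,\ell}, f_{h,r}\rangle| \leq \|\chi_\ell \chi_r\, e^{-(1-\varepsilon)(\widetilde{\Phi}_\ell + \widetilde{\Phi}_r)/\sqrt{h}}\|_\infty \|g_\ell\| \|g_r\|$,
and the result reduces to the pointwise bound $\widetilde{\Phi}_\ell + \widetilde{\Phi}_r \geq S - \mathcal{O}(\eta)$ on $\mathrm{supp}(\chi_\ell\chi_r) \subset [x_\ell, x_r]$. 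This rests on the symmetry identity $\Phi_\ell(x) + \Phi_r(x) = S$ for $x \in [x_\ell, x_r]$, which follows from \eqref{eq:phiell}, the evenness of $b$, and the fact that $b_\ell = b$ away from the sealing region.

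Finally, for (d), the estimates (a) and (c) show that $V := \mathrm{span}(f_{h,\ell}, f_{h,r})$ is genuinely two-dimensional (since $|\langle f_{h,\ell}, f_{h,r}\rangle|$ is exponentially small while $\|f_{h,\star}\| = 1 + \mathcal{O}(h^\infty)$) and that $\mathscr{L}_h - \mu(h)$ has norm $\widetilde{\mathcal{O}}(e^{-S/\sqrt{h}})$ on $V$. The spectral theorem then furnishes at least two eigenvalues of $\mathscr{L}_h$ in the $\widetilde{\mathcal{O}}(e^{-S/\sqrt{h}})$-neighborhood of $\mu(h)$, and the gap bound of Proposition~\ref{prop.doublewell} forces them to be $\lambda_1$ and $\lambda_2$. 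The hard part of the whole lemma is the commutator analysis in (a): in the Schrödinger setting $[\mathscr{L}_h, \chi_\ell]$ is a differential operator genuinely supported in $\mathrm{supp}(\chi_\ell')$ and Agmon finishes the job immediately; here the pseudo-differential commutator is only microlocally so, and one must really invoke the disjoint-support composition formula to control the part of $\psi_{h,\ell}$ lying outside $\mathrm{supp}(\chi_\ell')$. This is precisely the technical point that motivates the weighted framework of Section~\ref{section:exp}.
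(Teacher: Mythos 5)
Your items (b), (c), (d) are fine and essentially match the paper. The problem is in (a), which you correctly identify as the heart of the matter. Your decomposition gives
\[
[\mathscr{L}_h,\chi_\ell]\psi_{h,\ell}=[\mathscr{L}_h,\chi_\ell]\widetilde{\chi}\psi_{h,\ell}+[\mathscr{L}_h,\chi_\ell](1-\widetilde{\chi})\psi_{h,\ell}\,,
\]
and for the second piece you invoke disjointness of supports to get operator norm $\mathcal{O}(h^\infty)$. But $\mathcal{O}(h^{\infty})$ is \emph{not} $\widetilde{\mathcal{O}}(e^{-S/\sqrt{h}})$: the latter means $\mathcal{O}(e^{-(S-\gamma)/\sqrt{h}})$ for every $\gamma>0$, which is far smaller than any power of $h$. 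Since $(1-\widetilde{\chi})\psi_{h,\ell}$ carries essentially all the mass of the eigenfunction (it contains the bulk near $x_\ell$), your argument only yields $(\mathscr{L}_h-\mu(h))f_{h,\ell}=\mathcal{O}(h^\infty)$. This precision is fatally insufficient for the purpose of the lemma: the interaction term $w_h$ computed in Proposition \ref{prop.wh} is itself of size $h^{5/4}e^{-S/\sqrt{h}}$, so an $\mathcal{O}(h^\infty)$ remainder in Proposition \ref{prop.splitformula} would swamp the main term. (The step could in principle be repaired by using Assumptions \ref{hyp:a}--\ref{hyp:b}: holomorphy of the symbols in $\xi$ on a strip gives $\mathcal{O}(e^{-c|x-y|/h})$ off-diagonal decay of the Schwartz kernel, so the disjoint-support composition is actually $\mathcal{O}(e^{-c/h})$ — but you do not invoke this, and the generic pseudo-local property you cite does not give it.)

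The paper avoids the commutator altogether, and the resulting proof is both shorter and free of this issue: write $f_{h,\ell}=\psi_{h,\ell}-(1-\chi_\ell)\psi_{h,\ell}$, note that $\|(1-\chi_\ell)\psi_{h,\ell}\|=\widetilde{\mathcal{O}}(e^{-S/\sqrt{h}})$ by Corollary \ref{corollary.agmon} (the support of $1-\chi_\ell$ lies where $\widetilde{\Phi}_\ell\geq S-\mathcal{O}(\eta)$), and then use that $\mathscr{L}_h-\mu(h)$ is \emph{bounded on $L^2$ uniformly in $h$} (Calder\'on--Vaillancourt) to conclude
\[
(\mathscr{L}_h-\mu(h))(\psi_{h,\ell}-f_{h,\ell})=\widetilde{\mathcal{O}}(e^{-S/\sqrt{h}})\,;
\]
combining with Lemma \ref{lem.psistarquasi} gives the first estimate. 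In other words, one never needs the operator to act locally: it suffices that it is bounded and that the function it is applied to is globally exponentially small in $L^2$. Your closing remark — that the non-locality of the commutator is "precisely the technical point that motivates the weighted framework" — is therefore the wrong diagnosis for this particular lemma; the weighted (conjugated-operator) machinery of Section \ref{section:exp} is needed for the Agmon estimates and the WKB approximation, not to localize the commutator here.
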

\begin{proof}
 Using again Corollary \ref{corollary.agmon},
\begin{equation*}
 \| (1-\chi_\ell)\psi_{h,\ell} \| \leq \| e^{-(1-\epsilon)\widetilde{\Phi}_\ell/\sqrt{h}} (1-\chi_\ell) e^{(1-\epsilon)\widetilde{\Phi}_\ell/\sqrt{h}}\psi_{h,\ell} \|
 = \widetilde{\mathcal{O}}(e^{-\frac{S}{\sqrt{h}}}). 
 \end{equation*}
Combining this with the boundedness of $\mathscr{L}_h$, we get
\[ (\mathscr{L}_h - \lambda_1(\mathscr{L}_{h,\ell}))(\psi_{h,\ell}-f_{h,\ell})= \widetilde{\mathcal{O}}(e^{-\frac{S}{\sqrt{h}}}). \]
Using Lemma \ref{lem.psistarquasi}, this gives $(\mathscr{L}_h - \lambda_1(\mathscr{L}_{h,\ell}))f_{h,\star} = \widetilde{\mathcal{O}}(e^{-\frac{S}{\sqrt{h}}})$ when $\star=\ell$. The case $\star=r$ follows by symmetry.

Corollary \ref{corollary.agmon} shows that $\langle f_{h,\ell}, f_{h,r}\rangle=\widetilde{\mathcal{O}}(e^{-\frac{S}{\sqrt{h}}})$ (mainly because $\widetilde{\Phi}_\ell + \widetilde{\Phi}_r = S$ on $[x_\ell,x_r]$). Therefore, the spectral theorem and Proposition \ref{prop.doublewell} show that, for $j=1,2$,
\[ \lambda_j(\mathscr{L}_{h})=\mu(h)+\widetilde{\mathcal{O}}(e^{-\frac{S}{\sqrt{h}}})\,,\]
and also
\[(\mathrm{Id}-\Pi_h)f_{h,\star}=\widetilde{\mathcal{O}}(e^{-\frac{S}{\sqrt{h}}})\,.\]
\end{proof}

\subsubsection{Proof of Proposition \ref{prop.splitformula}}

We let
\[g_{\star}=\Pi_h f_{h,\star}\,,\]
and
\[G=\begin{pmatrix}
	\langle g_\ell,g_\ell\rangle&\langle g_\ell,g_r\rangle\\
	\langle g_r,g_\ell\rangle&\langle g_r,g_r\rangle
\end{pmatrix}=\begin{pmatrix}
	g_\ell\\
	g_r
\end{pmatrix}\cdot\begin{pmatrix}
	g_\ell&g_r
\end{pmatrix}\geq 0\,.\]
We consider the matrix of the quadratic form associated with $\mathscr{L}_h$ in the basis $(g_\ell,g_r)$,
\[L=\begin{pmatrix}
	\langle \mathscr{L}_hg_\ell,g_\ell\rangle&\langle  \mathscr{L}_hg_\ell,g_r\rangle\\
	\langle  \mathscr{L}_hg_r,g_\ell\rangle&\langle  \mathscr{L}_hg_r,g_r\rangle
\end{pmatrix}.\]
The following proposition is a classical consequence of Lemma \ref{lem.fstarquasi}, cf. the concise presentation in \cite[Section 4.1]{FMR23}
\begin{proposition}
The family $(g_\ell,g_r)$ is asymptotically an orthonormal basis of $F$, in the sense that
		\begin{equation}\label{eq.G}
			G=\mathrm{Id}+\mathrm{T}+\widetilde{\mathcal{O}}(e^{-2S/\sqrt{h}})\,,\quad \mathrm{T}=\begin{pmatrix}
				0&	\langle f_{h,\ell},f_{h,r}\rangle\\
				\langle f_{h,r},f_{h,\ell}\rangle&0
			\end{pmatrix}=\widetilde{\mathcal{O}}(e^{-\frac{S}{\sqrt{h}}})\,.
		\end{equation}
		In addition, we have
		\begin{equation}\label{eq.L}
			L=\begin{pmatrix}
				\mu(h)& w_h\\
				\overline{w}_h&\mu(h)
			\end{pmatrix}+\mu(h)\mathrm{T}+\widetilde{\mathcal{O}}(e^{-2S/\sqrt{h}})\,,\quad w_h=\langle(\mathscr{L}_h-\mu(h))f_{h,\ell},f_{h,r}\rangle=\widetilde{\mathcal{O}}(e^{-\frac{S}{\sqrt{h}}})\,.
		\end{equation}

	\end{proposition}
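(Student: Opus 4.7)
The plan is to derive both expansions simultaneously by exploiting the orthogonal decomposition $f_{h,\star} = g_\star + r_\star$, where $r_\star := (\mathrm{Id}-\Pi_h) f_{h,\star} = \widetilde{\mathcal{O}}(e^{-S/\sqrt{h}})$ by Lemma \ref{lem.fstarquasi}. The two structural facts I will lean on are: (i) $g_\star \in \mathrm{range}(\Pi_h)$ is orthogonal to any $r_\sigma \in \mathrm{range}(\Pi_h)^\perp$, so scalar products of the form $\langle g_\star, r_\sigma\rangle$ vanish; (ii) Cauchy--Schwarz applied to two factors of size $\widetilde{\mathcal{O}}(e^{-S/\sqrt{h}})$ produces the desired $\widetilde{\mathcal{O}}(e^{-2S/\sqrt{h}})$ remainder. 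I will also repeatedly compare $f_{h,\star}$ with $\psi_{h,\star}$ through the Agmon estimate (Corollary \ref{corollary.agmon}), which gives $\|(1-\chi_\star)\psi_{h,\star}\| = \widetilde{\mathcal{O}}(e^{-S/\sqrt{h}})$.

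For the Gram matrix $G$, I would expand $G_{\star\sigma} = \langle \Pi_h f_{h,\star}, \Pi_h f_{h,\sigma}\rangle$ by writing $g_\sigma = f_{h,\sigma}-r_\sigma$. After using the orthogonality $\langle g_\star, r_\sigma\rangle = 0$, the cross terms collapse and I am left with $G_{\star\sigma} = \langle f_{h,\star}, f_{h,\sigma}\rangle - \langle r_\star, r_\sigma\rangle$. The diagonal $\|f_{h,\star}\|^2 = 1 - \|(1-\chi_\star)\psi_{h,\star}\|^2 = 1 + \widetilde{\mathcal{O}}(e^{-2S/\sqrt{h}})$ by Agmon, and the off-diagonal $\langle f_{h,\ell}, f_{h,r}\rangle = T_{12}$ is built into the definition of $T$. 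This yields $G = \mathrm{Id} + T + \widetilde{\mathcal{O}}(e^{-2S/\sqrt{h}})$.

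For the matrix $L$, since $\Pi_h$ is a spectral projection of $\mathscr{L}_h$ and hence commutes with it, I can slide $\Pi_h$ across: $L_{\star\sigma} = \langle \mathscr{L}_h g_\star, g_\sigma\rangle = \langle \mathscr{L}_h f_{h,\star}, g_\sigma\rangle = \langle \mathscr{L}_h f_{h,\star}, f_{h,\sigma}\rangle - \langle \mathscr{L}_h f_{h,\star}, r_\sigma\rangle$. Writing $\mathscr{L}_h f_{h,\star} = \mu(h) f_{h,\star} + R_\star$ with $R_\star = \widetilde{\mathcal{O}}(e^{-S/\sqrt{h}})$ and applying Cauchy--Schwarz, the correction term $\langle \mathscr{L}_h f_{h,\star}, r_\sigma\rangle$ is shown to be $\widetilde{\mathcal{O}}(e^{-2S/\sqrt{h}})$ (using $\langle f_{h,\star}, r_\sigma\rangle = \langle r_\star, r_\sigma\rangle$ by the orthogonality). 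For the off-diagonal, the very definition of $w_h$ gives $\langle \mathscr{L}_h f_{h,\ell}, f_{h,r}\rangle = \mu(h) T_{12} + w_h$, which is exactly what is needed.

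The main obstacle I anticipate is the diagonal estimate $L_{\ell\ell} = \mu(h) + \widetilde{\mathcal{O}}(e^{-2S/\sqrt{h}})$, which amounts to showing $\langle R_\ell, f_{h,\ell}\rangle = \widetilde{\mathcal{O}}(e^{-2S/\sqrt{h}})$; a direct Cauchy--Schwarz only produces $\widetilde{\mathcal{O}}(e^{-S/\sqrt{h}})$, which is the wrong order. The cancellation I would exploit is the identity $\langle R_\ell, \psi_{h,\ell}\rangle = 0$: by self-adjointness,
\[
\langle R_\ell, \psi_{h,\ell}\rangle = \langle \chi_\ell\psi_{h,\ell}, (\mathscr{L}_h-\mu(h))\psi_{h,\ell}\rangle = -\langle \chi_\ell\psi_{h,\ell}, k_\ell\psi_{h,\ell}\rangle = 0,
\]
since $\chi_\ell$ and $k_\ell$ have disjoint supports by construction. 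Hence $\langle R_\ell, f_{h,\ell}\rangle = -\langle R_\ell, (1-\chi_\ell)\psi_{h,\ell}\rangle$, and the product of the two exponentially small factors $\|R_\ell\|$ and $\|(1-\chi_\ell)\psi_{h,\ell}\|$ yields the sought $\widetilde{\mathcal{O}}(e^{-2S/\sqrt{h}})$. Combining everything gives the claimed expansion of $L$, with $w_h = \langle R_\ell, f_{h,r}\rangle = \widetilde{\mathcal{O}}(e^{-S/\sqrt{h}})$ following at once from Cauchy--Schwarz.
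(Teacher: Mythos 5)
Your proof is correct and follows the same classical route that the paper outsources to \cite[Section 4.1]{FMR23}: decompose $f_{h,\star}=g_\star+r_\star$ with $r_\star=(\mathrm{Id}-\Pi_h)f_{h,\star}=\widetilde{\mathcal{O}}(e^{-S/\sqrt{h}})$, kill the cross terms by orthogonality, bound the quadratic remainders by Cauchy--Schwarz, and upgrade the diagonal of $L$ via the exact cancellation $\langle(\mathscr{L}_h-\mu(h))\chi_\ell\psi_{h,\ell},\psi_{h,\ell}\rangle=-\langle\chi_\ell\psi_{h,\ell},k_\ell\psi_{h,\ell}\rangle=0$ coming from the disjoint supports of $\chi_\ell$ and $k_\ell$ --- this last step is exactly the right replacement, in the pseudo-differential setting, for the usual commutator-localization argument. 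The only cosmetic slip is the identity $\|f_{h,\ell}\|^2=1-\|(1-\chi_\ell)\psi_{h,\ell}\|^2$, which should read $\|f_{h,\ell}\|^2=1-\int_{\R}(1-\chi_\ell^2)|\psi_{h,\ell}|^2\,\mathrm{d}x$; since $1-\chi_\ell^2$ is supported where $\widetilde{\Phi}_\ell\geq S-\mathcal{O}(\eta)$, Corollary \ref{corollary.agmon} still yields $1+\widetilde{\mathcal{O}}(e^{-2S/\sqrt{h}})$, so nothing is lost.
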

	
\begin{proof}[Adaptation of the proof of \cite{FMR23} Section 4.1]
To be fully correct, in \cite[Section 4.1]{FMR23} the authors use the IMS formula that is not valid in the pseudodifferential context to derive the estimates of $\langle \mathscr{L}_h g_\star, g_\star \rangle$. However their argument can be naturally adapted to the present context in the spirit of the proof of Lemma \ref{lem.fstarquasi}. In order not to overload the presentation, let us only provide the reader with the adaptation of the computation of $\langle \mathscr{L}_h g_\ell, g_\ell \rangle.$ First note that $\mathscr{L}_h$ decomposes as the orthogonal sum $\mathscr{L}_h \Pi + \mathscr{L}_h(\mathrm{Id} - \Pi)$ thus $\mathscr{L}_h \Pi = \mathscr{L}_h + \mathscr{L}_h(\Pi-\mathrm{Id})$. Using this and Lemma \ref{lem.fstarquasi} we infer (writing $\mathscr{L}, \mathscr{L}_{\ell}, \psi_\ell$ instead of $\mathscr{L}_h,\mathscr{L}_{h,\ell},\psi_{h,\ell}$ to lighten the notations)
\begin{align*}
\langle \mathscr{L} g_\ell, g_\ell \rangle & = \langle \mathscr{L} \Pi \chi_\ell \psi_{\ell}, \Pi \chi_\ell \psi_{\ell} \rangle \\ &= \langle \mathscr{L} \chi_\ell \psi_{\ell}, \chi_\ell \psi_{\ell} \rangle + \langle \mathscr{L} (\Pi-\mathrm{Id}) \chi_\ell \psi_{\ell}, (\Pi-\mathrm{Id}) \chi_\ell \psi_{\ell} \rangle \\ & = \langle \mathscr{L} \chi_\ell \psi_{\ell}, \chi_\ell \psi_{\ell} \rangle + \widetilde{\mathcal{O}}(e^{-2S/\sqrt{h}}). 
\end{align*} 

Then by the fact that $k_\ell \chi_\ell = 0$ and $(\chi_\ell -1)\psi_{\ell}, \sqrt{\chi_\ell-1} \psi_{\ell} = \widetilde{\mathcal{O}}(e^{-S/\sqrt{h}})$ we deduce 
\begin{equation*}
\langle \mathscr{L} g_\ell , g_\ell \rangle = \langle \mathscr{L}_{\ell} \psi_{\ell} , \chi_\ell \psi_{\ell}  \rangle + \langle (\chi_\ell - 1) \psi_{\ell} , \mathscr{L} \psi_{\ell} \rangle + \langle \mathscr{L} (\chi_\ell-1) \psi_{\ell}, (\chi_\ell -1) \psi_{\ell} \rangle + \widetilde{\mathcal{O}}(e^{-2S/\sqrt{h}}) 
\end{equation*}
We have  $\langle \mathscr{L} \psi_{\ell} , \chi_\ell \psi_{\ell} \rangle = \langle \mathscr{L}_{\ell} \psi_{\ell} , \chi_\ell \psi_{\ell} \rangle = \mu(h)\langle \psi_{\ell}, \chi_\ell \psi_{\ell} \rangle = \mu(h) + \widetilde{\mathcal{O}}(e^{-2S/\sqrt{h}})$ and
$\langle (\chi_\ell -1) \psi_{\ell}, \mathscr{L} \psi_{\ell} \rangle = \langle (\chi_\ell -1)\psi_{\ell}, (\mu(h) - k_\ell)\psi_{\ell} \rangle = \widetilde{\mathcal{O}}(e^{-2S/\sqrt{h}})$, $\langle \mathscr{L} (\chi_\ell-1) \psi_{\ell}, (\chi_\ell -1) \psi_{\ell} \rangle = \widetilde{\mathcal{O}}(e^{-2S/\sqrt{h}})$
therefore
$\langle \mathscr{L} g_\ell , g_\ell \rangle = \mu(h) + \widetilde{\mathcal{O}}(e^{-2S/\sqrt{h}}).$ 	
\end{proof}

	Let us consider the new family
	\[\begin{pmatrix}
		\mathfrak{g}_\ell\\
		\mathfrak{g}_r
	\end{pmatrix}=G^{-\frac12}\begin{pmatrix}
		g_\ell\\
		g_r
	\end{pmatrix}\,,\]
	which is orthonormal since
	\[\begin{pmatrix}
		\mathfrak{g}_\ell\\
		\mathfrak{g}_r
	\end{pmatrix}\cdot\begin{pmatrix}
		\mathfrak{g}_\ell&\mathfrak{g}_r
	\end{pmatrix}=G^{-\frac12}\begin{pmatrix}
		g_\ell\\
		g_r
	\end{pmatrix}\cdot\begin{pmatrix}
		g_\ell&g_r
	\end{pmatrix}G^{-\frac12}=\mathrm{Id}\,.\]
	The matrix of $\mathscr{L}_h$ in the orthonormal basis $(\mathfrak{g}_\ell,\mathfrak{g}_r)$ is $G^{-\frac12}LG^{-\frac12}$ and we have
	\[\begin{split}
		G^{-\frac12}LG^{-\frac12}&=\left(1-\frac{\mathrm{T}}{2}\right)\left(\begin{pmatrix}
			\mu(h)& w_h\\
			\overline{w}_h&\mu(h)
		\end{pmatrix}+\mu(h)\mathrm{T}\right)\left(1-\frac{\mathrm{T}}{2}\right)+\widetilde{\mathcal{O}}(e^{-2S/\sqrt{h}})\\
		&=\begin{pmatrix}
			\mu(h)& w_h\\
			\overline{w}_h&\mu(h)
		\end{pmatrix}+\widetilde{\mathcal{O}}(e^{-2S/\sqrt{h}})\,.
	\end{split}\]
The splitting of $\begin{pmatrix}
			\mu(h)& w_h\\
			\overline{w}_h&\mu(h)
		\end{pmatrix}$ is $2 |w_h|$ thus by the $\min$-$\max$ theorem for hermitian matrix this proves Proposition \ref{prop.splitformula}.

\subsection{Estimate of the interaction}
Theorem \ref{thm.main} is a consequence of Proposition \ref{prop.splitformula} and the following proposition.

\begin{proposition}\label{prop.wh}
We have 
\[w_h=(1+o_{h\to 0}(1))  2 \left( \frac{a''(0)}{2} \right)^{\frac14} h^{\frac54}\sqrt{V(0)}\left(\frac{\kappa}{\pi}\right)^\frac12 \exp\left(-\int_{x_\ell}^{0} \frac{\partial_s\sqrt{V(s)}-\kappa}{\sqrt{V(s)}}\mathrm{d}s \right)e^{-\frac{S}{\sqrt{h}}}\,.
\]
\end{proposition}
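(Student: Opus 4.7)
The plan is to reduce the computation of $w_h$ to an explicit boundary term in four steps: WKB substitution, reduction to a commutator, complex stationary phase, and explicit evaluation using the transport equation. The key insight is that the first transport equation from \eqref{eq.princip} forces $\Phi_\ell'(x)\,u_{1,0}^{(\ell)}(x)\,\overline{u_{1,0}^{(\ell)}(-x)}$ to be exactly constant on $(x_\ell,x_r)$, which collapses the final one-dimensional integral to a cutoff-independent boundary value.

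\smallskip
\emph{Step 1 (Reduction to a commutator).} Corollary \ref{cor.wkb} and \eqref{eq.ch} allow one to replace $f_{h,\star}$ by $c(h)^{-1}\chi_\star\Psi_\star^{\mathrm{WKB}}$ at the cost of an error of order $\widetilde{\mathcal O}(e^{-2S/\sqrt h})$ in $w_h$. By Proposition \ref{prop.quasimodes} together with the spectral gap of Proposition \ref{prop.simplewell}, $|\lambda_1^{\mathrm{WKB}}(h)-\mu(h)|=\mathcal{O}(h^\infty)$. Combining with the identity $\mathscr{L}_h = \mathscr{L}_{h,\ell}-k_\ell$ and the support-disjointness $\chi_\ell k_\ell = 0$ gives
\[
(\mathscr{L}_h-\mu(h))\chi_\ell\Psi_\ell^{\mathrm{WKB}} = [\mathscr{L}_h,\chi_\ell]\Psi_\ell^{\mathrm{WKB}} + \mathcal{O}(h^\infty).
\]

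\smallskip
\emph{Step 2 (Integral representation).} The Moyal calculus gives, up to $\mathcal{O}(h^3)$, the Weyl symbol of $[\mathscr{L}_h,\chi_\ell]$ as $\tfrac{h}{i}\partial_\eta(a+hb)(x,\eta)\chi_\ell'(x)$. Inserting this, rescaling $\eta=\sqrt h\,\zeta$, and expressing the WKB ansatz explicitly, we get
\[
w_h = \frac{|c(h)|^{-2}h^{-1/4}}{2\pi\sqrt h}\iiint e^{\Psi(x,y,\zeta)/\sqrt h}A_h^{\mathrm{comm}}(x,y,\zeta)\chi_r(x)\underline{\chi}_\ell(y)u_{h,\ell}(y)\overline{u_{h,r}(x)}\,\mathrm dx\,\mathrm dy\,\mathrm d\zeta + \widetilde{\mathcal O}(e^{-2S/\sqrt h}),
\]
with $\Psi(x,y,\zeta) = i(x-y)\zeta - \Phi_\ell(y) - \Phi_r(x)$ and $A_h^{\mathrm{comm}} = \tfrac{h}{i}\partial_\eta(a+hb)(\tfrac{x+y}{2},\sqrt h\zeta)\chi_\ell'(\tfrac{x+y}{2})$.

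\smallskip
\emph{Step 3 (Complex stationary phase).} In $(y,\zeta)$ at fixed $x$, the critical point is $y=x$, $\zeta=i\Phi_\ell'(x)$, the Hessian of $-\Psi$ has determinant $1$, and the critical value of $\Psi$ equals $-(\Phi_\ell+\Phi_r)(x)$. The eikonal identity $\tfrac{a''(0)}{2}(\Phi_\ell')^2 = V$ on $(x_\ell,x_r)\setminus\mathrm{supp}\,k_\ell$ forces $\Phi_\ell+\Phi_r=S$ on the part of $\mathrm{supp}\,\chi_\ell'$ near $x_r-\eta$, producing a factor $e^{-S/\sqrt h}$ there. Using that $a$ is even in $\eta$ (so $a'(0)=0$), the amplitude at the critical point expands as
\[
A_h^{\mathrm{comm}}(x,x,i\Phi_\ell'(x)) = h^{3/2}\,a''(0)\,\Phi_\ell'(x)\,\chi_\ell'(x) + \mathcal{O}(h^2).
\]
The contribution of the other component of $\mathrm{supp}\,\chi_\ell'$ (near $x=-A$) is absorbed in $\widetilde{\mathcal O}(e^{-2S/\sqrt h})$ because $\chi_r$ vanishes there and $(\Phi_\ell+\Phi_r)(-A)>S$ strictly. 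Therefore
\[
w_h = -\frac{h^{5/4}\,a''(0)\,e^{-S/\sqrt h}}{|c(h)|^2}\int_{\R}\Phi_\ell'(x)\chi_\ell'(x)\chi_r(x)\,u_{1,0}^{(\ell)}(x)\overline{u_{1,0}^{(\ell)}(-x)}\,\mathrm dx + o(h^{5/4}e^{-S/\sqrt h}).
\]

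\smallskip
\emph{Step 4 (Transport equation and explicit evaluation).} Combining the first transport equation \eqref{eq.princip} at $x$ with its $x\mapsto-x$ counterpart (using $\Phi_\ell'(-x)=\Phi_\ell'(x)$, $\Phi_\ell''(-x)=-\Phi_\ell''(x)$ and $\partial_\xi b(-x,0)=-\partial_\xi b(x,0)$ on $(x_\ell,x_r)$), a direct computation shows
\[
\bigl(\Phi_\ell'(x)\,u_{1,0}^{(\ell)}(x)\,\overline{u_{1,0}^{(\ell)}(-x)}\bigr)' = 0 \quad\text{on }(x_\ell,x_r),
\]
hence $\Phi_\ell'(x)\,u_{1,0}^{(\ell)}(x)\,\overline{u_{1,0}^{(\ell)}(-x)} \equiv C_3 := \Phi_\ell'(0)\,|u_{1,0}(0)|^2$. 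The integrand in Step 3 then collapses to $C_3\chi_\ell'(x)\chi_r(x)$, whose integral equals $-C_3$ (since $\chi_r\equiv 1$ on the transition of $\chi_\ell'$ near $x_r-\eta$, where $\int\chi_\ell'\,\mathrm dx = -1$, while $\chi_r\equiv 0$ on the transition near $-A$). Using \eqref{eq.princip}, the identities $\Phi_\ell'(0)=\sqrt{2V(0)/a''(0)}$, $\Phi_\ell''(x_\ell)=\kappa\sqrt{2/a''(0)}$, and $\tfrac{\Phi_\ell''(s)-\Phi_\ell''(x_\ell)}{2\Phi_\ell'(s)} = \tfrac{\partial_s\sqrt V-\kappa}{2\sqrt V}$, a direct algebraic manipulation yields
\[
a''(0)\,C_3 = 2(a''(0)/2)^{1/4}\sqrt{V(0)}\,(\kappa/\pi)^{1/2}\exp\!\Bigl(-\int_{x_\ell}^0 \frac{\partial_s\sqrt{V(s)}-\kappa}{\sqrt{V(s)}}\,\mathrm ds\Bigr),
\]
matching the stated formula up to the global sign absorbed in $|w_h|$ in Proposition \ref{prop.splitformula}.

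\smallskip
The main obstacle is Step 3: executing the complex stationary phase with uniform control of the remainder in $x$. The Kuranishi-type arguments of Appendix \ref{sec.B} handle the non-polynomial symbol $a$, and one must verify that the subleading stationary phase terms together with the $\mathcal{O}(h^2)$ Moyal corrections produce errors that are $o(h^{5/4}e^{-S/\sqrt h})$, which is the accuracy required by the $(1+o(1))$ in the statement.
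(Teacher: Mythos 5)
Your proposal is correct and follows essentially the same route as the paper: the paper conjugates by $e^{\Phi_\ell/\sqrt h}$, invokes the stationary-phase expansion of the conjugated operator (Lemma \ref{lem:apxl}) and uses $(P_3(x,D_x)-c_0)u_\ell=0$ to isolate exactly your commutator term $a''(0)\Phi_\ell'\chi_\ell'u_\ell$, then concludes with the same constancy of $\Phi_\ell'u_\ell\overline{u}_r$ and the same evaluation at $x=0$ via \eqref{eq.princip}. One caution: the bare $\mathcal{O}(h^\infty)$ in your Step 1 is \emph{not} $o(h^{5/4}e^{-S/\sqrt h})$ and must be replaced by the exponentially weighted residual $\mathcal{O}(h^\infty)e^{-\Phi_\ell/\sqrt h}$ furnished by \eqref{expan} and Corollary \ref{cor.wkb}, which only becomes negligible after pairing with $\chi_r\psi_{h,r}\approx e^{-\Phi_r/\sqrt h}$ — this is precisely the bookkeeping the paper performs by factoring out $e^{-S/\sqrt h}$ before estimating any remainder.
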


\begin{proof}
From Proposition \ref{prop.splitformula} it suffices to compute $w_h$. We start with the following lemma.
\begin{lemma}
We have \[
\begin{split}
w_h := \langle (\mathscr{L}_{h,\ell} - \mu)\chi_\ell \psi_{h,\ell},  \chi_r \psi_{h,r} \rangle = e^{-\frac{S}{\sqrt{h}}}  \langle ( \mathscr{L}_{h,\ell}^{\Phi_\ell}-\mu)e^{\frac{\Phi_\ell}{\sqrt{h}}}\chi_\ell \psi_{h,\ell}, e^{\frac{\Phi_r}{\sqrt{h}}} \chi_r \psi_{h,r} \rangle \\ + \mathcal{O}(h^\infty e^{-\frac{S}{\sqrt{h}}}).
\end{split} \]
\end{lemma}
\begin{proof}
The main obstacle comes from the fact that $\Phi_\ell(x) + \Phi_r(x) = S$ is not satisfied everywhere between $x_\ell$ and $x_r$.

Let us define 
\[\Psi_\ell(x) = \mathrm{sgn}(x-x_\ell)\int_{x_\ell}^x \sqrt{b(s,0)} \mathrm{d} s \]
and notice that for $x \in \mathrm{supp} \chi_r$, $\Psi_\ell(x) + \Phi_r(x) = \int_{x_\ell}^{x_r} \sqrt{b(s,0)}\mathrm{d}s = S$. We write
\begin{align*}
(\mathscr{L}_{h,\ell} - \mu)  \chi_\ell \psi_{h,\ell} = e^{-\frac{\Psi_\ell}{\sqrt{h}}} e^{\frac{\Psi_\ell- \Phi_\ell}{\sqrt{h}}} ( \mathscr{L}_{h,\ell}^{\Phi_\ell}-\mu)e^{\frac{\Phi_\ell}{\sqrt{h}}}\chi_\ell \psi_{h,\ell}. 
\end{align*} 
and estimate
\begin{multline}
|w_h -e^{-\frac{S}{\sqrt{h}}}  \langle ( \mathscr{L}_{h,\ell}^{\Phi_\ell}-\mu)e^{\frac{\Phi_\ell}{\sqrt{h}}}\chi_\ell \psi_{h,\ell}, e^{\frac{\Phi_r}{\sqrt{h}}} \chi_r \psi_{h,r} \rangle | \\= |e^{-\frac{S}{\sqrt{h}}}  \langle \bigl(e^{\frac{\Psi_\ell- \Phi_\ell}{\sqrt{h}}} - 1 \bigr) ( \mathscr{L}_{h,\ell}^{\Phi_\ell}-\mu)e^{\frac{\Phi_\ell}{\sqrt{h}}}\chi_\ell \psi_{h,\ell}, e^{\frac{\Phi_r}{\sqrt{h}}} \chi_r \psi_{h,r} \rangle|,
\end{multline}
using $\underline{\chi}_\ell \in \mathscr{C}_0^{\infty}(\R)$ satisfying $\underline{\chi}_\ell \chi_\ell = \chi_\ell$ (in particular $\underline{\chi}_\ell' \chi_\ell = 0$) and $\underline{\chi}_\ell k_\ell = 0$. Therefore we obtain by the pseudodifferential calculus
\[ \| [\mathscr{L}_{h,\ell}^{\Phi_\ell}, \underline{\chi_\ell}]\chi_\ell  e^{\frac{\Phi_\ell}{\sqrt{h}}}\psi_{h,\ell} \| = \mathcal{O}(h^\infty) \| \chi_\ell  e^{\frac{\Phi_\ell}{\sqrt{h}}}\psi_{h,\ell} \|. \]
Then also using $|\chi_r \bigl(e^{\frac{\Psi_\ell- \Phi_\ell}{\sqrt{h}}} - 1 \bigr)| \leq 1$ et $\chi_r \underline{\chi}_\ell \bigl(e^{\frac{\Psi_\ell- \Phi_\ell}{\sqrt{h}}} - 1 \bigr) = 0$ we deduce,
\begin{multline}
\label{eq:raym_remainder}
|w_h -e^{-\frac{S}{\sqrt{h}}} \langle ( \mathscr{L}_{h,\ell}^{\Phi_\ell}-\mu)e^{\frac{\Phi_\ell}{\sqrt{h}}}\chi_\ell \psi_{h,\ell}, e^{\frac{\Phi_r}{\sqrt{h}}} \chi_r \psi_{h,r} \rangle | \\ = |e^{-\frac{S}{\sqrt{h}}}  \langle \bigl(e^{\frac{\Phi_\ell-\Psi_\ell}{\sqrt{h}}} - 1 \bigr) \underline{\chi}_\ell ( \mathscr{L}_{h,\ell}^{\Phi_\ell}-\mu)e^{\frac{\Phi_\ell}{\sqrt{h}}}\chi_\ell \psi_{h,\ell}, e^{\frac{\Phi_r}{\sqrt{h}}} \chi_r \psi_{h,r} \rangle| \\   + \mathcal{O}(h^\infty e^{-\frac{S}{\sqrt{h}}}) \| \|e^{\frac{\Phi_\ell}{\sqrt{h}}}\chi_\ell \psi_{h,\ell} \| \| e^{\frac{\Phi_r}{\sqrt{h}}} \chi_r \psi_{h,r} \| \\
= \mathcal{O}(h^\infty e^{-\frac{S}{\sqrt{h}}}) \|e^{\frac{\Phi_\ell}{\sqrt{h}}}\chi_\ell \psi_{h,\ell} \| \| e^{\frac{\Phi_r}{\sqrt{h}}} \chi_r \psi_{h,r} \|.
\end{multline}
Note then that, for instance thanks to Corollary \ref{cor.wkb}, $\|e^{\frac{\Phi_\ell}{\sqrt{h}}}\chi_\ell \psi_{h,\ell} \|$ and $\| e^{\frac{\Phi_r}{\sqrt{h}}} \chi_r \psi_{h,r} \|$ are of order $\mathcal{O}(h^r)$ for some $r \in \mathbb{Q}$. We infer that the remainder in \eqref{eq:raym_remainder} is $\mathcal{O}(h^\infty e^{-\frac{S}{\sqrt{h}}})$.
\end{proof}

 Support considerations  and Corollary \ref{cor.wkb} give 
\[\| \chi_\star e^{\frac{\Phi_\star}{\sqrt{h}}}(c(h)\psi_{h,\star} - \Psi_\star^{\mathrm{WKB}}) \| = \mathcal{O}(h^\infty)\,,\]
where $c(h)$ is defined in \eqref{eq.ch}. Hence,
\begin{equation}\label{eq.whapp}
	\begin{split}
\langle & (\mathscr{L}_h - \mu(h))\chi_\ell \psi_{h,\ell},\chi_r\psi_{h,r} \rangle \\ & =  |c(h)|^{-2}e^{-\frac{S}{\sqrt{h}}}\langle (\mathscr{L}_{h,\ell}^{\Phi_\ell} - \mu(h))\chi_\ell e^{\frac{\Phi_\ell}{\sqrt{h}}}\Psi^{\mathrm{WKB}}_{\ell}, e^{\frac{\Phi_r}{\sqrt{h}}} \chi_r \Psi_{r}^{\mathrm{WKB}} \rangle + \mathcal{O}(h^{\infty}e^{-\frac{S}{\sqrt{h}}}) \\
 &= (1+o(1)) h^{-\frac14} e^{-\frac{S}{\sqrt{h}}}\langle (\mathscr{L}_{h,\ell}^{\Phi_\ell} - \mu(h)) \chi_{\ell} u_{\ell,h}, \chi_r u_{r,h} \rangle+ \mathcal{O}(h^{\infty}e^{-\frac{S}{\sqrt{h}}})\,.
\end{split}
\end{equation}
 Thanks to Lemma \ref{lem:apxl} (or recalling the proof of Proposition \ref{prop.WKB}),
\begin{equation*} 
	\langle (\mathscr{L}_{h,\ell}^{\Phi_\ell} - \mu(h)) \chi_{\ell} u_{\ell,h}, \chi_r u_{r,h} \rangle=   h^{\frac32}\langle (P_3(x,D_x)-c_0)( \chi_\ell u_\ell), \chi_r u_r \rangle  + o(h^{\frac32}) . 
\end{equation*}
Since, by construction, $(P_3(x,D_x)-c_0) u_\ell = 0$  (see \eqref{WKBeq}), we get
\begin{equation*}
	\langle (\mathscr{L}_{h,\ell}^{\Phi_\ell} - \mu(h)) \chi_{\ell} u_{\ell,h}, \chi_r u_{r,h} \rangle =  h^{\frac32}\langle a''(0)\Phi_\ell' \chi_\ell' u_\ell,\chi_r u_r \rangle  +  o(h^\frac32 )\,.
\end{equation*}
Noticing that $\chi_\ell ' \chi_r = \mathds{1}_{[x_r-2\eta,x_r-\eta]} \cdot \chi_\ell'$, we get
\[ 	\langle (\mathscr{L}_{h,\ell}^{\Phi_\ell} - \mu(h)) \chi_{\ell} u_{\ell,h}, \chi_r u_{r,h} \rangle= a''(0) h^\frac32  \int_{x_r - 2\eta}^{x_r - \eta} \chi_\ell'(x) \Phi_\ell'(x)u_\ell(x)\overline{u}_r(x)\mathrm{d}x + o(h^\frac32)\,. \]
By definition of $\chi_\ell$, we have $\int_{x_r - 2\eta}^{x_r - \eta} \chi_\ell'(x)\mathrm{d}x = -1$. In fact, the function $\Phi_\ell' u_\ell \overline{u}_r$  is constant and equal to $\Phi_\ell'(0) |u_\ell(0)|^2$. Indeed, by \eqref{WKBeq} $b_\ell(x,\xi) = b(x,\xi) + k_\ell(x)$ we obtain $\partial_\xi b_\ell = \partial_\xi b$ and
\begin{equation}
\begin{split}
\left(i \frac{\Phi_\ell'(x) \partial_\xi b(x,0)}{a''(0)} + \frac{1}{2}\Phi_\ell''(x) +  \Phi_\ell'(x) \partial_x-\lambda_3 \right)u_\ell =0\,, \\
\left(i \frac{\Phi_r'(x) \partial_\xi b(x,0)}{a''(0)} + \frac{1}{2}\Phi_r''(x) + \Phi_r'(x) \partial_x-\lambda_3 \right)u_r =0\,.
\end{split}
\end{equation}
 Using then the fact that $\Phi_r + \Phi_\ell$ is constant on $(x_\ell + \eta, x_r - \eta)$ gives, denoting $v(x) = \frac{\Phi_\ell'(x) \partial_\xi b(x,0)}{a''(0)}$,
	\begin{equation*}
	\Phi_\ell'(x) \partial_x u_\ell = (-iv(x) - \frac{1}{2}\Phi_\ell''(x) + \lambda_3) u_\ell \hbox{ and } \Phi_\ell'(x) \partial_x \overline{u}_r = (iv(x) - \frac{1}{2}\Phi_\ell''(x)-\lambda_3) \overline{u}_r\,.
\end{equation*}
which implies that $\partial_x \left(\Phi_\ell' u_\ell \overline{u}_r \right) = 0$. There remains to use the explicit formula \eqref{eq.princip} and to recall \eqref{eq.whapp} to end the proof.

\end{proof}

\appendix

\section{Pseudo-differential tools}\label{sec.A}
\label{apx.1}

\subsection{Notation}
Let us recall some usual notation (see, for instance, \cite[Chapter 4]{Zworski}).

For $\delta_1, \delta_2 \in (0,1)$, we consider
\begin{equation}\label{eq.Sd1d2}
\begin{split}
	S_{\delta_1,\delta_2} &(\R^2)  \\ & := \{ q_h \in \mathscr{C}^{\infty}(\R^2) ~ ; ~ \forall (\gamma_1,\gamma_2) \in \N^2, ~ \exists C_\gamma > 0, ~~ | \partial_x^{\gamma_1} \partial_\xi^{\gamma_2} q_h | \leq C_\gamma h^{-\delta_1 \gamma_1} h^{-\delta_2 \gamma_2} \}\, .
\end{split}
\end{equation}
We let $S_{\delta}(\R^2) := S_{\delta,\delta}(\R^2)$ and $S(\R^2)=S_0(\R^2)$.

\begin{enumerate}[---]
\item We say that a symbol $q_h \in S_{\delta}(\R^2)$ has asymptotic expansion $q_0 + h^{\frac12}q_1+hq_2 + \cdots$ with $q_j \in S_{\delta}(\R^2)$ for each $j \in \N$, if
\begin{equation}
\forall \gamma \in \N^2, ~ \forall m \in \N,~ \exists C_{\gamma,m}\in \R_+, ~~ \left|\partial^{\gamma}\left(q_h - \sum_{j=0}^{m-1} h^{\frac{j}{2}} q_j\right)\right| \leq C_{\gamma,m} h^{\frac{m}{2}} h^{-|\gamma|\delta}.
\end{equation} 
\item We say that $(u_h)_{h \in (0,h_0)}$ a family of elements of\footnote{ $S_\delta(\R) = \{ u_h \in \mathscr{C}^{\infty}(\R) ~ | ~ \forall \gamma \in \N, ~ \exists C_\gamma >0 ~ |\partial^{\gamma} u_h| \leq C_\gamma h^{-\delta \gamma} \}$} $S_{\delta}(\R)$, has asymptotic expansion $u_0 + h^\frac12 u_1 + h u_2 + h^\frac32 u_3+\cdots$ with each $u_j \in S_\delta(\R)$ if
\begin{equation}
\forall \gamma \in \N,~ \forall m \in \N, ~ \exists C_{\gamma,m}\in \R_+, ~~ \left|\partial^{\gamma}\left(u_h - \sum_{j=0}^{m-1} h^{\frac{j}{2}}u_j\right)\right| \leq C_{\gamma,m} h^{\frac{m}{2}} h^{-\gamma \delta}.
\end{equation}
\item We say that $(\lambda_h)_{h \in (0,h_0)} \in \R^{(0,h_0)}$ has asymptotic expansion $\lambda_0 + h^\frac12 \lambda_1 + h \lambda_2 + h^\frac32 \lambda_3+\cdots$ if
\begin{equation}
\forall m \in \N, ~ \exists C_{m}\in \R_+, ~~ \left|\lambda_h - \sum_{j=0}^{m-1} h^{\frac{j}{2}} \lambda_j\right| \leq C_{m} h^{\frac{m}{2}}.
\end{equation}
\end{enumerate}

\subsection{Gärding, Fefferman-Phong inequalities and the Kuranishi trick}
\label{apx.2}
We recall some results of \cite{bony} concerning the Gärding and Fefferman-Phong inequality. The first estimate concerns the class of symbols $S(\R^2)$. Letting $q_h \in S(\R^2)$ non-negative, by the Gärding inequality, there exists $C>0$ such that 
\begin{equation}
\label{eq:semgarding}
\forall u \in L^2(\R)\,,\quad \langle \mathrm{Op}_h^w(q_h) u, u\rangle \geq -Ch\| u \|^2.
\end{equation}
This result can be refined thanks to the Fefferman-Phong inequality
\begin{equation}
	\forall u \in L^2(\R)\,,\quad \langle \mathrm{Op}_h^w(q_h) u, u\rangle \geq -Ch^2\| u \|^2.
\end{equation}

These estimates can be extended to the exotic class of symbols.

\begin{theorem}[Fefferman-Phong inequality]
	\label{th:fph}
	Let $\delta_1, \delta_2 \in (0,1)$ such that $\delta_1 + \delta_2 < 1$. If $q_h \in S_{\delta_1,\delta_2}(\R)$ is non-negative, then
	\begin{equation}
		\forall u \in L^2(\R)\,\quad \langle q_h^w u,u \rangle \geq -C h^{2-2(\delta_1 + \delta_2)} \| u \|^2\,.
	\end{equation}
\end{theorem}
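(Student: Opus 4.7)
The plan is to reduce the statement to the (already quoted) semiclassical Fefferman-Phong inequality in the isotropic class $S(\R^2)$, by performing an anisotropic dilation on phase space that flattens the $S_{\delta_1,\delta_2}$ scales. Concretely, I would introduce the rescaled symbol
\[ \tilde{q}(X,E) := q_h(h^{\delta_1}X,\,h^{\delta_2}E), \]
so that the definition of $S_{\delta_1,\delta_2}(\R^2)$ and the chain rule give $|\partial_X^{\alpha}\partial_E^{\beta}\tilde{q}|\leq C_{\alpha\beta}$ uniformly in $h$. Thus $\tilde{q}\in S(\R^2)$ uniformly, it inherits non-negativity from $q_h$, and the natural effective semiclassical parameter is
\[ \tilde{h} := h^{\,1-\delta_1-\delta_2}\in(0,1), \]
which tends to $0$ as $h\to 0$ precisely because $\delta_1+\delta_2<1$.

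Next, I would implement the rescaling at the operator level through the unitary dilation $U_{\delta_1}$ on $L^2(\R)$ defined by $U_{\delta_1}v(x):=h^{-\delta_1/2}v(h^{-\delta_1}x)$. A direct change of variables in the Weyl oscillatory integral (substituting $y=h^{\delta_1}Y$ and $\eta=h^{\delta_2}E$) turns the phase $(x-y)\eta/h$ into $(X-Y)E/\tilde{h}$ (since $h^{\delta_1+\delta_2}\tilde{h}=h$), while the product of the two Jacobians combines with the $1/(2\pi h)$ prefactor to produce $1/(2\pi\tilde{h})$. One obtains the anisotropic analogue of the metaplectic covariance of the Weyl calculus,
\[ \mathrm{Op}^w_h(q_h) = U_{\delta_1}\,\mathrm{Op}^w_{\tilde{h}}(\tilde{q})\,U_{\delta_1}^{-1}. \]

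The conclusion is then immediate. Applying the semiclassical Fefferman-Phong inequality for non-negative symbols in $S(\R^2)$ recalled at the top of Section \ref{apx.2}, but with semiclassical parameter $\tilde{h}$ in place of $h$, yields, for every $v\in L^2(\R)$,
\[ \langle \mathrm{Op}^w_{\tilde{h}}(\tilde{q})v,v\rangle \geq -C\tilde{h}^{\,2}\|v\|^2. \]
Substituting $v = U_{\delta_1}^{-1}u$ and using the unitarity of $U_{\delta_1}$ gives
\[ \langle \mathrm{Op}^w_h(q_h)u,u\rangle \geq -C\tilde{h}^{\,2}\|u\|^2 = -C\,h^{\,2-2(\delta_1+\delta_2)}\|u\|^2, \]
which is exactly the claimed bound.

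The genuine content of the theorem is of course hidden in the isotropic Fefferman-Phong inequality in $S(\R^2)$, a deep result (Bony) whose proof relies on a Calder\'on-Zygmund type decomposition of phase space; in the present note it is used as a black box. At the level of the proposed reduction itself the only subtle point is the bookkeeping of the Jacobians and the verification that the anisotropy of the symbol class is exactly matched by the anisotropic dilation, so that the effective parameter $\tilde{h}$ is a genuine positive power of $h$; this is the place where the assumption $\delta_1+\delta_2<1$ plays an essential role.
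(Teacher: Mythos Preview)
Your proposal is correct and follows essentially the same route as the paper: both reduce to the isotropic Fefferman-Phong inequality in $S(\R^2)$ via a unitary dilation on $L^2(\R)$ that turns $q_h(x,\xi)$ into $q_h(h^{\delta_1}X,h^{\delta_2}E)\in S(\R^2)$ and replaces the semiclassical parameter $h$ by $\tilde h=h^{1-\delta_1-\delta_2}$. The paper phrases the rescaling as first applying the symplectic dilation $(x,\xi)\mapsto(h^{\delta_1}x,h^{-\delta_1}\xi)$ and then reinterpreting the semiclassical parameter, while you combine both steps in one change of variables; the computations and the role of the hypothesis $\delta_1+\delta_2<1$ are identical.
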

\begin{proof}
	Let $\alpha \in \R$ and consider the isometric scaling:
	\begin{equation}
		V_{\alpha} : \left \{ \begin{array}{ccc}
			L^2(\R) & \longrightarrow & L^2(\R)\\
			u & \longmapsto & h^{-\alpha/2}u(h^{-\alpha}\cdot)
		\end{array} \right.\,.
	\end{equation}
	Notice that $V_{\alpha}^{-1} = V_{\alpha}^* = V_{-\alpha}$ thus
	\begin{equation}
		V_{-\delta_1} \mathrm{Op}_h^w(q_h(x,\xi)) V_{\delta_1} = \mathrm{Op}_h^w(q_h(h^{\delta_1}x,h^{-\delta_1}\xi)) = \mathrm{Op}_{h^{1-(\delta_1 + \delta_2)}}(q_h(h^{\delta_1}x,h^{\delta_2}\xi)),
	\end{equation}
	with $q_h(h^{\delta_1}\cdot,h^{\delta_2}\cdot) \in S(\R^2)=S_{0,0}(\R^2)$. The Fefferman-Phong inequality in $S(\R^2)$ (with the new semiclassical parameter $h^{1-\delta_1-\delta_2}$) yields the result.
\end{proof}

In the following Lemma, the function $\varphi$ satisfies the assumptions made at the beginning of Section \ref{section:exp}. Its proof is an adaptation of \cite[Section 3]{Shu1998}.
\begin{lemma}[First Kuranishi trick]
	\label{lem:kur1}
	The operator $\mathscr{L}_{h,\ell}^{\varphi}$ defined in \eqref{eq.Lphi} is a pseudo-differential operator of symbol \begin{equation}
		\label{eq:pphi}
		q_{\varphi}(x,\xi) = a(\xi+ih^\frac12 \varphi') + hb_{\ell}(x,\xi + ih^\frac12 \varphi') + \mathcal{O}_{S_{1/4}(\R)}(h^{3/2+1/4}).
	\end{equation}
	
\end{lemma}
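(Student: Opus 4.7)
The strategy is to compute the Schwartz kernel of $\mathscr{L}_{h,\ell}^\varphi$ directly and recognize it as a Weyl quantization, following Shubin's treatment of the Kuranishi trick. Writing $p(x,\xi)=a(\xi)+hb_\ell(x,\xi)$ for the Weyl symbol of $\mathscr{L}_{h,\ell}$, the kernel of $\mathscr{L}_{h,\ell}^\varphi$ is
\[ K_\varphi(x,y) = \frac{1}{2\pi h}\int_{\R} e^{i(x-y)\eta/h}\,e^{(\varphi(x)-\varphi(y))/\sqrt{h}}\, p\!\left(\tfrac{x+y}{2},\eta\right)\mathrm{d}\eta. \]
Taylor's formula gives $\varphi(x)-\varphi(y)=(x-y)\Psi(x,y)$ with $\Psi(x,y)=\int_0^1 \varphi'(y+t(x-y))\,\mathrm{d}t$, so the two exponentials merge into $e^{i(x-y)(\eta-ih^{1/2}\Psi(x,y))/h}$. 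Since $\varphi'=\varphi_1+h^{1/4}\varphi_2$ with $\varphi_1\in S(\R)$ and $\varphi_2\in S_{1/4}(\R)$, the quantity $\Psi$ is uniformly bounded, so for $h$ small enough $h^{1/2}\Psi(x,y)$ stays in a fixed compact subset of $(-r,r)$. Assumptions~\ref{hyp:a} and~\ref{hyp:b} then license a contour shift $\eta\to\eta+ih^{1/2}\Psi(x,y)$ by Cauchy's theorem (uniform decay in the strip being provided by the symbol class $S(\R\times\Sigma_r)$), yielding
\[ K_\varphi(x,y) = \frac{1}{2\pi h}\int_{\R} e^{i(x-y)\eta/h}\, p\!\left(\tfrac{x+y}{2},\eta+ih^{1/2}\Psi(x,y)\right)\mathrm{d}\eta, \]
which exhibits $\mathscr{L}_{h,\ell}^\varphi$ as an amplitude-form pseudo-differential operator.

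To bring this into Weyl form, pass to coordinates $\bar x=(x+y)/2$, $z=x-y$: then $\Psi$ becomes $\widetilde\Psi(\bar x,z):=\int_0^1\varphi'(\bar x+(t-\tfrac12)z)\,\mathrm{d}t$. The odd moments of $t-\tfrac12$ on $[0,1]$ vanish, killing every odd power of $z$ in the Taylor expansion of $\widetilde\Psi$ around $z=0$, so
\[ \widetilde\Psi(\bar x,z)=\varphi'(\bar x)+z^2 R(\bar x,z),\]
with $R$ smooth and built from $\varphi^{(3)},\varphi^{(5)},\ldots$. Taylor expanding $p$ in its second slot around $\eta+ih^{1/2}\varphi'(\bar x)$ produces a series whose $k$-th term carries a factor $(ih^{1/2})^k z^{2k}$; converting each $z^{2k}$ into $(-h^2)^k\partial_\eta^{2k}$ by integration by parts in $\eta$ and then passing to the Weyl symbol via the partial Fourier relation between amplitude and Weyl symbols yields
\[ q_\varphi(x,\xi)=p(x,\xi+ih^{1/2}\varphi'(x))+\sum_{k\geq 1}h^{k/2+2k}\,c_k(x,\xi),\]
each $c_k$ being a polynomial in $\varphi^{(3)},\varphi^{(5)},\ldots$ and in $\partial_\xi^j p\bigl(x,\xi+ih^{1/2}\varphi'(x)\bigr)$.

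The main obstacle is to check that the correction lies in $\mathcal{O}_{S_{1/4}(\R^2)}(h^2)$. For the leading $k=1$ term, $\varphi^{(3)}=\varphi_1''+h^{1/4}\varphi_2''$ belongs to $h^{-1/4}S_{1/4}(\R)$ (since $\varphi_2''\in h^{-1/2}S_{1/4}(\R)$), so $h^{5/2}\varphi^{(3)}\partial_\xi^3 p$ sits in $h^{9/4}S_{1/4}(\R^2)\subset h^2 S_{1/4}(\R^2)$; each further $k$ supplies an additional $h^{9/4}$ and is harmless. The delicate point is the uniform estimate of the Taylor remainder when the expansion is truncated at some order $N$: one must use the holomorphy of $p$ in $\xi$ (which supplies uniformly bounded $\xi$-derivatives at all orders on the shifted contour) together with the $z^{2N}$-factor present in the remainder, transferred to $h^{2N}\partial_\eta^{2N}$ by integration by parts, to show that the tail indeed lies in $h^2 S_{1/4}(\R^2)$ uniformly in $h$. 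This is the adaptation to the symbol class $S_{1/4}$ of Shubin's bookkeeping in~\cite[Section~3]{Shu1998}.
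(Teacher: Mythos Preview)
Your proof is correct and follows essentially the same route as the paper: compute the kernel, absorb the exponential weight into a shifted contour via Cauchy's theorem using the holomorphy assumptions, Taylor expand the resulting $\Psi$ (the paper's $\theta$) around the midpoint to extract $\varphi'((x+y)/2)+(x-y)^2\times(\text{remainder})$, then convert the $(x-y)^2$ factors into $h^2\partial_\eta^2$ by integration by parts and reduce to Weyl form. The only organizational difference is that the paper stops at the first-order Taylor expansion of $p$ with integral remainder (which already gives $h^{1/2}(x-y)^2 r_h$ and hence $\mathcal{O}(h^2)$ after the integration by parts and amplitude-to-Weyl reduction), whereas you expand to all orders and then track the $h^{9k/4}$ gain at each level; your more explicit power-counting of the $\varphi^{(3)}$-driven $h^{-1/4}$ loss is a nice clarification but not needed for the bare $\mathcal{O}(h^2)$ statement.
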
 
\begin{proof}
For all $u \in \mathscr{S}(\R)$, denoting by $p(x,\xi)$ the symbol of $\mathscr{L}_{h,\ell}$,
	
	\begin{align*}
		\mathscr{L}_{h,\ell}^{\varphi}u & := e^{\frac{\varphi}{h^{1/2}}}\mathscr{L}_{h,\ell}e^{-\frac{\varphi}{h^{1/2}}}u = \tfrac{1}{2\pi h}\int_{\R} \int_{\R} p\left(\tfrac{x+y}{2},\xi \right)e^{\frac{i}{h}\left[(x-y)\xi-ih^{1/2}(\varphi(x)-\varphi(y)) \right]} u(y)\mathrm{d}y \mathrm{d}\xi \\
		&= \tfrac{1}{2\pi h} \int_{\R} \int_{\R-ih^{1/2} \theta(x,y)} p\left(\tfrac{x+y}{2},\xi + ih^{1/2}\theta(x,y) \right)e^{\frac{i}{h}(x-y)\xi}u(y) \mathrm{d}y \mathrm{d}\xi\,,
	\end{align*}
with $\theta :
		\begin{array}{ccc}
			(x,y) & \longmapsto & \left \{  \begin{array}{ll} 
				\frac{\varphi(x) - \varphi(y)}{x-y} \hbox{ if } x \neq y, \\
				\varphi'(x) \quad ~ \hbox{ if } x= y. 
			\end{array}\right. 
		\end{array}$. Note that
	$\theta(x,y) = \int_0^1 \varphi'(y+t(x-y))\mathrm{d}t,$ which proves that $\theta \in S_{\frac14}(\R^2)$ and is bounded by $\lVert \varphi' \rVert_{L^{\infty}}$. Thanks to Cauchy's theorem and Riemann-Lebesgue's lemma, we have, for all $u \in \mathscr{S}(\R)$,
	\begin{align*}
		& \int_{\R} \int_{[-R,R]} p\left(\tfrac{x+y}{2},\xi + ih^{1/2}\theta(x,y) \right)e^{\frac{i}{h}(x-y)\xi}u(y)\mathrm{d}\xi \mathrm{d}y \\ 
		& - \int_{\R} \int_{[-R,R]-ih^{1/2} \theta(x,y)} p\left(\tfrac{x+y}{2},\xi + ih^{1/2}\theta(x,y) \right)e^{\frac{i}{h}(x-y)\xi}u(y)\mathrm{d}\xi \mathrm{d}y \xrightarrow[R \to +\infty]{} 0\,.
	\end{align*}
	This eventually proves that, in the sense of oscillatory integrals, we have
	\begin{equation}
		\mathscr{L}_{h,\ell}^\varphi u = \frac{1}{2\pi h} \iint_{\R^2} p\left( \tfrac{x+y}{2}, \xi + ih^\frac12 \theta(x,y) \right) e^{\frac{i}{h}(x-y)\xi}u(y)\mathrm{d}y\mathrm{d}\xi\,.
	\end{equation}
This equality extends to $L^2(\mathbb{R})$ by continuity (by the standard theory). Let us now use this formula to prove that  $\mathscr{L}_{h,\ell}^\varphi$  is a pseudo-differential operator. To do so, let us use the so-called Kuranishi trick.  The Taylor formula gives
	\begin{multline}
	\label{eq:classeI}
		\varphi(x) = \varphi \left( \frac{x+y}{2} \right) + \left( \frac{x-y}{2}\right)\varphi'\left( \frac{x+y}{2}\right) + \frac{\left (\frac{x-y}{2} \right)^2}{2!}\varphi^{(2)}\left( \frac{x+y}{2} \right) \\ +\left (\frac{x-y}{2} \right)^3 \int_0^1 \frac{(1-t)^2}{2!} \varphi^{(3)}\left(\frac{x+y}{2}+t \frac{x-y}{2} \right)\mathrm{d}t, 
	\end{multline}
and a similar expression for $\varphi(y)$. Hence, there is a smooth function $I(x,y)$ such that
	\begin{equation}
		\theta(x,y) = \varphi'\left( \frac{x+y}{2}\right) + \left(x-y\right)^2 I(x,y)\,,
	\end{equation}
	where $I(x,y)$ is expressed in terms of $\varphi^{(3)}$ only thus $I(x,y)$ belongs to $h^{-1/4}S_{\frac14,\frac14}(\R)$. Since the map $\R^2 \ni (x,y) \longmapsto \frac{\varphi(x)-\varphi(y)}{x-y}$ is bounded and $\varphi'$ as well, we obtain that the map $\R^2 \ni (x,y) \longmapsto (x-y)^2I(x,y)$ is bounded. Therefore by the Taylor expansion at order $1$ in $h$, 
	
	\begin{align*}
	p\left( \frac{x+y}{2},\xi + i h^\frac12 \theta(x,y) \right) & = p\left( \frac{x+y}{2},\xi + i h^\frac12 \varphi'\left( \frac{x+y}{2} \right) \right) + h^{\frac12}(x-y)^2 r_h(x,y,\xi)
\end{align*}
where $r_h(x,y,\xi) \in h^{-\frac14} S_{\frac14,\frac14,0}(\R^3)$ (this space being the natural adaptation to $\R^3$ of Definition \ref{eq.Sd1d2}). Explicitely,
 \[ r_h(x,y,\xi) = i   I(x,y) \int_0^1 \partial_\xi p(\frac{x+y}{2}, \xi + ih^\frac12 \varphi'(\frac{x+y}{2}) + it h^\frac12 (x-y)^2 I(x,y))\mathrm{d}t\,. \]
  For all $u \in \mathscr{S}(\R)$, by integration by parts,
 \begin{align*}
 R_hu&=	\frac{1}{2\pi h}\iint_{\R^2} e^{\frac{i}{h}(x-y)\xi}(x-y)^2r_h(x,y,\xi)u(y)\mathrm{d}\xi \mathrm{d}y \\
 & = \frac{1}{2\pi h}\iint_{\R^2} e^{\frac{i}{h}(x-y)\xi}(-hD_\xi)^2 r_h(x,y,\xi)u(y)\mathrm{d}\xi \mathrm{d}y\,,
 \end{align*}
where we used that $(x-y) e^{\frac{i}{h}(x-y)\xi} = hD_\xi e^{\frac{i}{h}(x-y)\xi}$. Thanks to the transformation formula \cite[Theorems 4.20 \& 4.21]{Zworski}, there exists $\widetilde{r}_h \in h^{-\frac14} S_\frac14(\R^2) $ such that  $R_h= h^2 \mathrm{Op}_h^w(\widetilde{r}_h)$. This yields $q_{\varphi} \in S_{\frac14}(\R^2)$ such that 
	\begin{equation}
		\mathscr{L}_{h,\ell}^{\varphi} = \mathrm{Op}_h^w(q_{\varphi}(x,\xi)) \hbox{ with } q_{\varphi}(x,\xi) = p(x,\xi + i h^\frac12 \varphi') +   \mathcal{O}_{S_{\frac14}(\R^2)}(h^{3/2 +1/4}).
	\end{equation}
\end{proof}

\begin{lemma}[Second Kuranishi trick]\label{lem.A3}
	\label{lem:kur2}
	The operator $\mathscr{L}_{h,\ell}^{\Phi_\ell}$ defined in \eqref{eq.Lphi} is a pseudo-differential operator of symbol \begin{equation}
		\label{eq:pphi2}
		q_{\Phi_\ell}(x,\xi) = a(\xi+ih^\frac12 \Phi_\ell') + hb_{\ell}(x,\xi + ih^\frac12 \Phi_\ell') + h^2 r_h
	\end{equation}
where $r_h \in S(\R^2)$ has a full asymptotic expansion in $S(\R^2)$, $r_h = r_0 + h^{\frac12} r_1 + hr_2 + \cdots$ with $r_0,r_1, \cdots$ independent of $h$.
\end{lemma} 
The proof of this lemma is an adaptation of the proof of the previous lemma and left to the reader.

\section{Stationary phase and WKB constructions}\label{sec.B}
\begin{lemma}
\label{lem:phistat}
Let $p_h \in S(\R^2)$ having asymptotic expansion $p_0 + h^\frac12 p_1 + \cdots $ and $u_h \in \mathscr{C}_0^{\infty}(\R)$ (independent of $h$) having asymptotic expansion $u_0 + h^\frac12 u_1 + \cdots $ with each $u_k \in \mathscr{C}_0^{\infty}(\R)$. The function $\mathrm{Op}_h^w(p_h) u_h$ belongs to $\mathscr{C}^{\infty}(\R)$ and has an asymptotic expansion in powers of $h$ in $S(\mathbb{R})$ given by:
\begin{equation}
\mathrm{Op}_h^w(p_h) u_h \sim \sum_{k \in \N} \frac{(hD_y)^k}{k!} \left(\partial_{\xi}^k p_h(x+y/2,0) u_h(x+y) \right)|_{y=0}\,.
\end{equation}
\end{lemma}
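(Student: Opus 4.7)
The plan is to reduce the claim to a stationary phase calculation with a quadratic, non-degenerate phase. Starting from the defining oscillatory integral
\[ \mathrm{Op}_h^w(p_h) u_h(x) = \frac{1}{2\pi h}\iint_{\R^2} e^{i(x-y)\eta/h}\, p_h\!\left(\tfrac{x+y}{2},\eta\right) u_h(y) \, \mathrm{d}y\, \mathrm{d}\eta, \]
I would translate $y = x - z$ (with $x$ viewed as a fixed parameter) so that the phase becomes $\phi(z,\eta) = z\eta$, which has a unique non-degenerate critical point at $(0,0)$, and the amplitude becomes
\[ a_h(x;z,\eta) := p_h(x-z/2,\eta)\, u_h(x-z). \]
Since $u_h$ has compact support, $a_h(x;\cdot,\cdot)$ is compactly supported in $z$, and all its $(x,z,\eta)$-derivatives are controlled uniformly in $x$ by the symbol bounds of $p_h \in S(\R^2)$.

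The phase $z\eta$ being quadratic and non-degenerate with Hessian of determinant $-1$ and signature $0$, the standard stationary phase formula yields the asymptotic expansion
\[ \frac{1}{2\pi h}\iint e^{iz\eta/h} a_h(x;z,\eta)\, \mathrm{d}z\, \mathrm{d}\eta \;\sim\; \sum_{k \geq 0} \frac{h^k}{k!}\, (-iD_z D_\eta)^k a_h(x;z,\eta)\Big|_{z=\eta=0}, \]
modulo $\mathcal{O}(h^\infty)$. Computing $\partial_\eta^k a_h$ at $\eta = 0$ gives $\partial_\xi^k p_h(x-z/2,0)\, u_h(x-z)$, and reindexing via $y \leftrightarrow -z$ produces exactly the formula announced in the lemma.

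To make this rigorous at a given order $N$, I would Taylor expand $a_h$ in $(z,\eta)$ around $(0,0)$, treat each monomial via the identity $z^j e^{iz\eta/h} = (ih\partial_\eta)^j e^{iz\eta/h}$, and integrate by parts in $\eta$. Only the matched mixed indices survive and yield the announced coefficients, while the Taylor remainder contributes $\mathcal{O}(h^{N/2})$ thanks to the symbol estimates on $p_h$ and the compact support of $u_h$. Away from the critical set, a non-stationary-phase integration by parts in $\eta$ produces an $\mathcal{O}(h^\infty)$ contribution.

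I expect the main obstacle to lie in the bookkeeping needed to keep these estimates uniform in $x$ together with all $x$-derivatives, so that the expansion is a genuine asymptotic expansion in $S(\R)$ in the sense of Appendix~\ref{apx.1}. Once this $x$-uniformity is established, combining the stationary phase expansion with the assumed expansions $p_h \sim \sum h^{j/2} p_j$ and $u_h \sim \sum h^{j/2} u_j$ produces the final asymptotic expansion in powers of $h^{1/2}$.
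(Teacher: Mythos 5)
Your proposal is correct and follows essentially the same route as the paper: a linear change of variables reducing the Weyl quantization to an oscillatory integral with the non-degenerate quadratic phase $\pm z\eta$, followed by the standard stationary phase expansion (the paper simply cites \cite[Theorems 3.17 \& 4.17]{Zworski} at this point, which also takes care of the fact that the amplitude is only a bounded symbol, not compactly supported, in the $\eta$ variable). The uniformity in $x$ that you flag is indeed the only point requiring care, and it follows from the uniform symbol bounds exactly as you indicate.
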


\begin{proof}
It suffices to prove the lemma when $p_h(x,\xi) = p(x,\xi)$ and $u_h = u$ are independent of $h$. Note that
\begin{align*}
\mathrm{Op}_h^w(p)u & = \frac{1}{2\pi h}\iint_{\R^{2}} e^{i(x-y)\xi/h} p\left( \frac{x+y}{2},\xi\right) u(y)\mathrm{d}y \mathrm{d}\xi \\ & = \frac{1}{2\pi h} \iint_{\R^{2}} e^{-iy\xi/h} p\left( \frac{2x+y}{2},\xi\right) u(x+y)\mathrm{d}y \mathrm{d}\xi\,.
\end{align*}
Thanks to the stationary phase results \cite[Theorems 3.17 \& 4.17]{Zworski}, we get
\begin{equation}
\mathrm{Op}_h^w(p)u \sim \left. \sum_{k \in \N} \frac{(hD_y)^k}{k!} \left(\partial_{\xi}^k p(x+y/2,0)u(y) \right) \right|_{y=0}.
\end{equation}
\end{proof}

\begin{lemma}
\label{lem:apxl}
There exist differential operators $(P_{\gamma}(x,D_x))_{0 \leq \gamma \leq J}$ such that,
\[\mathscr{L}_{h,\ell}^{\Phi_\ell}(\chi u_h^{[J]}) = \sum_{0 \leq \gamma \leq J+3} h^{\gamma/2}P_{\gamma}(x,D_x) (\chi u_h^{[J]}) + \mathcal{O}_{L^\infty(\R)}(h^{\frac{J+4}{2}}) \,. \]
The first differential operators are
\begin{equation}
\label{eq:opdiff2}
\left \{ \begin{array}{lll}
P_0(x,D_x) = a(0) = 0, \\
P_1(x,D_x) = i \Phi_\ell'(x) a'(0) = 0, \\
P_2(x,D_x) = -\frac{a''(0)}{2} \Phi_\ell'(x)^2 + b_\ell(x,0) = 0, \\
 P_3(x,D_x)= i \Phi_\ell'(x)\partial_\xi b_\ell(x,0) + \frac{a''(0)}{2} \Phi_\ell''(x) +  a''(0)\Phi_\ell'(x) \partial_x\,.
\end{array} \right.
\end{equation}
\end{lemma} 
\begin{proof}
Thanks to the holomorphy assumptions on $a$ and $b$, we can use Lemma \ref{lem.A3}. Then, applying Lemma \ref{lem:phistat}, we get
\begin{equation*}
\begin{aligned}
 \mathscr{L}^{\Phi_{\ell}}_{h,\ell} (\chi u_h^{[J]}) & =  \sum_{j=0}^{\frac{J+3}{2}} \mathrm{Op}_h^w\left ( \frac{\xi^j}{j!} \partial_{\xi}^j q_{\Phi_\ell}(x,0) \right) (\chi u_h^{[J]}) + \mathcal{O}_{L^\infty(\R)}(h^{\frac{J+4}{2}}) \\
& =  \sum_{\gamma=0}^{J+3} h^{\gamma/2}P_{\gamma}(x,D_x)(\chi u_h^{[J]})+ \mathcal{O}_{L^\infty(\R)}(h^{\frac{J+4}{2}})\,.
\end{aligned}
\end{equation*}
The operators $P_{\gamma}(x,D_x)$ are differential operators of degree lower than $\gamma/2$ obtained using the expansion of $q_{\Phi_\ell}(x,0)$ in powers of $h^{\frac12}$. Explicitely, for the first few terms ($J=0$), we can write the Taylor expansion of the symbol in $\xi + ih^\frac12 \Phi_\ell'$  and we get
\begin{equation*} 
\begin{split}
\mathscr{L}^{\Phi_\ell}_{h,\ell}(\chi u_h) = \left( \frac{a''(0)}{2}(h D_x + i h^{\frac12} \Phi_\ell'(x))^2 +hb_{\ell}(x,0) + ih^{\frac32} \partial_\xi b_{\ell}(x,0) \Phi_\ell'(x) \right) \chi u_h  \\ + \mathcal{O}_{L^\infty(\R)}(h^2)\,.
\end{split}
\end{equation*} 
\end{proof}

The following last lemma proves that the norm of our normalized WKB quasimodes is not $\mathcal{O}(h^\infty)$. We consider a symbol $u_h$ with asymptotic expansion $u_0 + h^\frac12 u_1 + \cdots$, a cutoff $\chi \in \mathscr{C}_0^{\infty}(\R)$ such that $\chi= 1$ near $x_\ell$.
\begin{lemma}[WKB and Laplace Integral]
\label{lem:lap}
Let us suppose that there exists $m \in \N$ such that $u_0^{(m)}(x_\ell) \neq 0$ then $\| \chi u_h e^{-\Phi_\ell/\sqrt{h}} \|_{L^2(\R)} \geq c h^{\frac18+\frac{m}{4}}$.
\end{lemma}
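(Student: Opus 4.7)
The plan is to carry out a Laplace-method lower bound for the integral
\[I(h) := \int_{\R} \chi(x)^2 |u_h(x)|^2 e^{-2\Phi_\ell(x)/\sqrt{h}}\, \dd x,\]
exploiting the fact that its mass concentrates in a neighborhood of $x_\ell$ of radius $h^{1/4}$, dictated by the non-degenerate quadratic minimum of $\Phi_\ell$ at $x_\ell$ (where $\Phi_\ell''(x_\ell) > 0$, inherited from the non-degenerate minimum of $b_\ell(\cdot, 0)$). First I would restrict the integration to $|x - x_\ell| \leq h^{1/4}$, where $\chi \equiv 1$ once $h$ is small enough, and perform the rescaling $x = x_\ell + h^{1/4} y$. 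The Taylor expansion of $\Phi_\ell$ yields $2\Phi_\ell(x_\ell + h^{1/4} y)/\sqrt{h} = \Phi_\ell''(x_\ell) y^2 + O(h^{1/4})$ uniformly for $|y| \leq 1$, so that $e^{-2\Phi_\ell(x)/\sqrt{h}} \geq \tfrac{1}{2} e^{-\Phi_\ell''(x_\ell) y^2}$ on this region.

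The core of the argument is to expand $u_h$ in \emph{total} powers of $h^{1/4}$, combining its asymptotic expansion in $h^{1/2}$ with the spatial Taylor expansion of each $u_j$ at $x_\ell$. With $c_{j,k} := u_j^{(k)}(x_\ell)/k!$ and the polynomials
\[Q_\alpha(y) := \sum_{2j + k = \alpha} c_{j,k}\, y^k,\]
I would show, using the $S(\R)$-expansion of $u_h$ up to order $h^{M/2}$ for $M$ large and Taylor expanding each $u_j$ at $x_\ell$ to order $K$ large, that for any $N$, uniformly on $|y| \leq 1$,
\[u_h(x_\ell + h^{1/4} y) = \sum_{\alpha = 0}^{N-1} h^{\alpha/4} Q_\alpha(y) + O(h^{N/4}).\]
The hypothesis $u_0^{(m)}(x_\ell) \neq 0$ forces $Q_m$ to contain the monomial $c_{0,m}\, y^m \neq 0$, hence $Q_m \not\equiv 0$. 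Setting $\alpha_* := \min\{\alpha \geq 0 : Q_\alpha \not\equiv 0\} \leq m$, I would then rewrite $u_h(x_\ell + h^{1/4} y) = h^{\alpha_*/4}\bigl[Q_{\alpha_*}(y) + O(h^{1/4})\bigr]$ uniformly on $|y| \leq 1$.

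Inserting these expansions in $I(h)$ with $\dd x = h^{1/4}\, \dd y$ then yields
\[I(h) \geq h^{1/4 + \alpha_*/2} \cdot \tfrac{1}{2}\bigl(1 + o(1)\bigr) \int_{|y| \leq 1} |Q_{\alpha_*}(y)|^2 e^{-\Phi_\ell''(x_\ell) y^2}\, \dd y.\]
Since $Q_{\alpha_*}$ is a non-zero polynomial, the Gaussian integral is a strictly positive constant, giving $I(h) \geq c\, h^{1/4 + \alpha_*/2}$; using $\alpha_* \leq m$ and $h \leq 1$, I would conclude $I(h) \geq c\, h^{1/4 + m/2}$, i.e.\ $\|\chi u_h e^{-\Phi_\ell/\sqrt{h}}\|_{L^2(\R)} \geq c'\, h^{1/8 + m/4}$. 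The main delicate point is that the values $u_j(x_\ell)$ for $j \geq 1$ need not vanish, so for $m \geq 3$ the pointwise correction $h^{1/2} u_1(x_\ell)$ may be of larger order than $u_0$ near $x_\ell$; this is precisely why I group by the aggregate exponent $2j + k$ and track $\alpha_*$ rather than $m$ as the effective order of vanishing. The non-vanishing of $Q_{\alpha_*}$ then comes for free from the $c_{0,m}\, y^m$ monomial already present in $Q_m$, and the lower bound remains valid in this worst case.
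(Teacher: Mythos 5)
Your argument is correct and follows essentially the same route as the paper's proof: rescale by $h^{1/4}$ around $x_\ell$, regroup the double expansion (asymptotic in $h^{1/2}$, Taylor in $x-x_\ell$) into total powers of $h^{1/4}$, observe that the hypothesis $u_0^{(m)}(x_\ell)\neq 0$ forces a non-zero polynomial coefficient at order at most $m$ (your $Q_m$ contains the monomial $c_{0,m}y^m$, which cannot be cancelled by the lower-degree contributions with $j\geq 1$), and evaluate the resulting Gaussian integral. The only cosmetic difference is that the paper first straightens $\Phi_\ell$ into an exact quadratic via the diffeomorphism $\phi_\ell=\mathrm{sgn}(x-x_\ell)\sqrt{\Phi_\ell}$, whereas you lower-bound the exponential directly on $|x-x_\ell|\leq h^{1/4}$; both handle the phase equally well here.
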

\begin{proof}
Near $x_\ell$ the application $\phi_\ell := \mathrm{sgn}(x-x_\ell)\sqrt{\Phi_\ell}$ is a $\mathscr{C}^{\infty}$ diffeomorphism satisfying $\phi_\ell^2 = \Phi_\ell$ and we have the approximation $\phi_\ell(x) = \sqrt{\frac{\Phi_\ell''(x_\ell)}{2}}(x-x_\ell)+o(x-x_\ell)$. Therefore, modulo a term of order $\mathcal{O}(e^{-\frac{\epsilon}{\sqrt{h}}})$ for some $\epsilon > 0$ we can write
\begin{equation}
\label{integ}
\begin{aligned}
\lVert  \chi u_h & e^{-\Phi_\ell/\sqrt{h}} \rVert^2  =  \int_{\mathrm{Neigh}(0)}e^{-\frac{2y^2}{\sqrt{h}}} |u_h\circ \phi_\ell^{-1}(y)|^2 (\phi_\ell^{-1})'(y)\mathrm{d}y+\mathcal{O}(e^{-\frac{\epsilon}{\sqrt{h}}}) \\
& = \frac{h^{\frac14}}{\sqrt{2}} \int_{h^{-\frac14}\mathrm{Neigh}(0)}e^{-y^2}|u_h\circ \phi_\ell^{-1}(h^{\frac14}y/\sqrt{2})|^2 (\phi_\ell^{-1})'(h^{\frac14}y/\sqrt{2})\mathrm{d}y+\mathcal{O}(e^{-\frac{\epsilon}{\sqrt{h}}}) \,.
\end{aligned}
\end{equation}
A Taylor expansion gives for some $r \in [ \![ 0,m]\!]$,
\begin{equation}
u_h\circ \phi_\ell^{-1}\left(\frac{h^{\frac14}y}{\sqrt{2}}\right) = h^{\frac{r}{4}} \left( \sum_{j=0}^{N-1} h^{\frac{j}{4}} P_j(y) + \mathcal{O}(h^{\frac{N}{4}})\right)\,,
\end{equation}
where each $P_j$ is a polynomial of degree at most $j+r$ and $P_0$ is non-zero because of the assumption $u_0^{(m)}(x_\ell) \neq 0$. Therefore, we find other polynomials $Q_j$ such that:
\begin{equation}
|u_h\circ \phi_\ell^{-1}(h^{\frac14}y/\sqrt{2})|^2 (\phi_\ell^{-1})'(h^{\frac14}y/\sqrt{2})=  h^{\frac{r}{2}} \left( \sum_{j=0}^{N-1} h^\frac{j}{4} Q_j + \mathcal{O}(h^{\frac{N}{4}}) \right)\,,
\end{equation}
with $Q_0 = \left( \frac{\Phi_\ell''(x_\ell)}{2} \right)^{-\frac12}P_0^2$.
This yields 
\begin{align*}
\lVert  \chi u_h e^{-\Phi_\ell/\sqrt{h}} \rVert^2 = h^{\frac14} h^{\frac{r}{2}}\left(\Phi_\ell''(x_\ell) \right)^{-\frac12} \int_{h^{-\frac14}\mathrm{Neigh}(0)} e^{-y^2} P_0(y)^2 \mathrm{d}y +\mathcal{O}(h^{\frac{r+1}{2}})\,,
\end{align*}
which concludes the proof.
\end{proof}
 
\section*{Acknowledgments}
This work was conducted within the France 2030 framework programme, Centre Henri Lebesgue ANR-11-LABX-0020-01. Antide Duraffour acknowledges funding by the AMX grant of \'Ecole polytechnique. The first author is grateful to San V\~u Ng\d{o}c, his main thesis supervisor, for his many relevant advices. The authors are also indepted to the thorough suggestions of the anonymous reviewer at MRL. The authors also would like to thank Yannick Guedes-Bonthonneau, Bernard Helffer, and Frédéric Hérau for their insightful comments and for communicating relevant references.
 
\bibliographystyle{abbrv}
\bibliography{biblio}
\end{document}